\pgfplotsset{compat=1.7}
\def\ball{{I\kern -.35em B}}
\def\tto{\rightrightarrows}
\def\bx{\bar x}
\def\by{\bar y}
\def\bz{\bar z}
\def\nn{neighborhood\,}
\def\nns{neighborhoods\,}
\def\dom{\mathop{\rm dom}\nolimits}
\def\gph{\mathop{\rm gph}\nolimits}
\def\dist{\mathop{\rm dist}\nolimits}
\newtheorem{proof}{Proof}
\newtheorem{definition}{Definition}[section]
\newtheorem{proposition}{Proposition}[section]
\newtheorem{theorem}{Theorem}[section]
\newtheorem{lemma}{Lemma}[section]
\newtheorem{remark}{Remark}[section]
\newtheorem{example1}{Example}[section]
\newtheorem{algorithm}[definition]{Algorithm}
\newdimen\LineSpace
\tikzset{
	line space/.code={\LineSpace=#1},
	line space=5pt
}
\begin{document}

\title{On semismooth$^*$ path-following method and uniformity of strong metric subregularity at/around the reference point}
\author{Tomáš Roubal\thanks{Institute of Information Theory and Automation, Czech Academy of Sciences, Prague, Czech Republic, 	 roubal@utia.cas.cz, ORCID ID: 0000-0002-6137-1046}
	\and 
	    Jan Valdman\thanks{Institute of Information Theory and Automation, Czech Academy of Sciences, Prague, Czech Republic,	 jan.valdman@utia.cas.cz, ORCID ID: 0000-0002-6081-5362}
	    \thanks{Department of Mathematics, Faculty of Science, University of South Bohemia, Brani\v{s}ovsk\'a 31, \v{C}esk\'e Bud\v{e}jovice, Czech Republic}
	    }
\date{}
\maketitle

\begin{abstract}
	This paper investigates a path-following method inspired by the semismooth$^*$ approach for solving algebraic inclusions, with a primary emphasis on the role of uniform subregularity. Uniform subregularity is crucial for ensuring the robustness and stability of path-following methods, as it provides a framework to uniformly control the distance between the input and the solution set across a continuous path. We explore the problem of finding a mapping $ x: \mathbb{R} \longrightarrow \mathbb{R}^n $ that satisfies $ 0 \in F(t, x(t)) $ for each $ t \in [0, T] $, where $ F $ is a set-valued mapping from $ \mathbb{R} \times \mathbb{R}^n $ to $ \mathbb{R}^n $.
	
	The paper discusses two approaches: the first considers mappings with uniform semismooth$^*$ properties along continuous paths, leading to a consistent grid error throughout the interval, while the second examines mappings exhibiting pointwise semismooth$^*$ properties at individual points along the path. The uniform strong subregularity framework is integrated into these approaches to strengthen the stability of solution trajectories and improve algorithmic convergence.

\end{abstract}
\noindent\textbf{Keywords.} Strong metric subregularity, uniform semismoothness*, path-following method, generalized equation, coderivatives

\noindent\textbf{AMS subject classifications.} 65K10, 65K15, 90C33

\section{Introduction}

In 2021, H. Gfrerer and J. Outrata introduced the semismooth$^*$ method in \cite{GO2021}, inspired by the Newton method, for solving problems of finding $x \in \mathbb{R}^n$ such that
\begin{eqnarray}
	\label{eqInclusion}
	0 \in F(x),
\end{eqnarray}
where $F$ is a set-valued mapping between $\mathbb{R}^n$ and $\mathbb{R}^n$ with a closed graph. This method represents an innovation as it is based on the linearization of the set-valued mapping in inclusion \eqref{eqInclusion}, or both the single-valued and set-valued parts in the case of generalized equations (GEs) using limiting coderivative. This contrasts with the Newton-Josephy method \cite{Josephy1979}, where only the single-valued part is linearized in GEs. For their algorithms they need the following:
points $ (x,y) \in \gph\,F $,  sets $\mathcal{A}_{\text{reg}}\, F({x},{y}) \subset \mathbb{R}^{n\times n}\times \mathbb{R}^{n\times n}$ and $\mathcal{G}_{F, \bar{x}}^{\eta, \kappa}(x)\subset \mathbb{R}^n\times \mathbb{R}^n\times\mathbb{R}^{n\times n}\times\mathbb{R}^{n\times n}$ such that
$$
\mathcal{A}_{\text{reg}}\, F({x},{y}):=\left\lbrace (A,B):  ((B)_i^T,(A)_i^T)\in \gph\,D^* F({x},{y}),  i \in \lbrace 1,2,\dots,n  \rbrace \text{ and } A^{-1}\text{ exists}\right\rbrace
$$	
and
\begin{eqnarray*}
	\mathcal{G}_{F, \bar{x}}^{\eta, \kappa}(x):=\lbrace(\hat{x}, \hat{y}, A,B): \Vert(\hat{x}-\bar{x},\hat{y})\Vert \leq \eta \Vert x-\bar{x} \Vert, (A, B)\in\mathcal{A}_{\text{reg}}\, F(\hat{x},\hat{y}), \Vert A^{-1}\Vert \Vert (A\mid B)\Vert_F \leq \kappa \rbrace,
\end{eqnarray*}
where $ \eta, \kappa $ are positive numbers and $ \bar{x}$ is a (exact) solution of \eqref{eqInclusion}. The algorithm has the following form:

\begin{algorithm}
	\label{algGFOUT}
	\begin{enumerate}
		\item Choose a starting point $ x_{0} $; set the iteration counter $ k := 0 $.
		\item If $ 0 \in F(x_{k}) $, stop the algorithm.
		\item Compute $ (\hat{x}_{k}, \hat{y}_{k}) \in \text{gph } F $ close to $ (x_{k}, 0) $ such that $ \mathcal{A}_{\text{reg}}\, F(\hat{x}_{k}, \hat{y}_{k}) \neq \emptyset $.
		\item Select $ (A, B) \in \mathcal{A}_{\text{reg}}\, F(\hat{x}_{k}, \hat{y}_{k}) $ and compute the new iterate $ x_{k+1} = \hat{x}_{k} - A^{-1} B \hat{y}_{k} $.
		\item Set $ k := k + 1 $ and go to 2.
	\end{enumerate}
\end{algorithm}

Under semismoothness$^*$ of the mapping $ F $ at the solution $ \bar{x} $ of $\eqref{eqInclusion}$ and nonemptyness of the set 	$\mathcal{G}_{F, \bar{x}}^{\eta, \kappa}(x)$ for each $x$ near the solution, they proved a convergence theorem.

\begin{theorem}
	Assume that $ F $ is semismooth* at $ (\bar{x}, 0) \in \text{gph } F $ and assume that there are $ \ell, \kappa > 0 $ such that for every $ x \notin F^{-1}(0) $ sufficiently close to $ \bar{x} $ we have $ \mathcal{G}^{\ell,\kappa}_{F,\bar{x}}(x) \neq \emptyset $. Then there exists some $ \delta > 0 $ such that for every starting point $ x_{0} \in \ball_{\mathbb{R}^n}(\bar{x},\delta ) $ Algorithm \ref{algGFOUT} either stops after finitely many iterations at a solution or produces a sequence $ x_{k} $ which converges superlinearly to $ \bar{x} $, provided we choose in every iteration $ (\hat{x}_{k}, \hat{y}_{k}, A, B) \in \mathcal{G}^{\ell,\kappa}_{F,\bar{x}}(x_{k}) $.
\end{theorem}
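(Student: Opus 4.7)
The plan is to analyze the error $x_{k+1} - \bar{x}$ via the algorithm's update formula, bound it through the semismoothness$^*$ inequality applied column-by-column to $(A,B)$, and then use the uniform estimate $\|A^{-1}\|\,\|(A\mid B)\|_F \le \kappa$ together with the proximity bound $\|(\hat x_k-\bar x,\hat y_k)\| \le \ell\|x_k-\bar x\|$ to conclude superlinear contraction.

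First, I would rewrite the Newton-type update as
\[
  x_{k+1}-\bar x \;=\; \hat x_k - \bar x - A^{-1}B\hat y_k \;=\; A^{-1}\bigl(A(\hat x_k-\bar x) - B\hat y_k\bigr),
\]
which reduces the analysis to controlling the vector $A(\hat x_k-\bar x)-B\hat y_k$. By definition of $\mathcal{A}_{\mathrm{reg}}F(\hat x_k,\hat y_k)$, each pair of rows $((B)_i^T,(A)_i^T)$ lies in $\gph\,D^*F(\hat x_k,\hat y_k)$, so the entries of this vector are precisely the quantities controlled by the semismoothness$^*$ property at $(\bar x,0)$.

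Next, using the standard characterization of semismoothness$^*$, for every $\varepsilon>0$ there exists a neighborhood of $(\bar x,0)$ such that for all $(\hat x,\hat y)\in\gph\,F$ in this neighborhood and all $(v,u)\in\gph\,D^*F(\hat x,\hat y)$,
\[
  \bigl|\langle u,\hat x-\bar x\rangle - \langle v,\hat y\rangle\bigr| \;\le\; \varepsilon\bigl(\|\hat x-\bar x\|+\|\hat y\|\bigr)\|(u,v)\|.
\]
Applying this row-wise to $((B)_i^T,(A)_i^T)$ and summing the squared estimates yields
\[
  \|A(\hat x_k-\bar x)-B\hat y_k\| \;\le\; \varepsilon\bigl(\|\hat x_k-\bar x\|+\|\hat y_k\|\bigr)\,\|(A\mid B)\|_F.
\]
Combining with the previous display gives
\[
  \|x_{k+1}-\bar x\| \;\le\; \|A^{-1}\|\,\|(A\mid B)\|_F\cdot\varepsilon\bigl(\|\hat x_k-\bar x\|+\|\hat y_k\|\bigr) \;\le\; \sqrt 2\,\kappa\,\ell\,\varepsilon\,\|x_k-\bar x\|,
\]
where the last inequality uses the uniform constants provided by $\mathcal{G}^{\ell,\kappa}_{F,\bar x}(x_k)\ne\emptyset$. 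Since $\varepsilon$ can be made arbitrarily small by shrinking the neighborhood of $\bar x$, this simultaneously establishes local contraction (hence well-definedness of the iteration on some ball $\ball_{\mathbb{R}^n}(\bar x,\delta)$ by induction) and superlinear convergence $\|x_{k+1}-\bar x\|/\|x_k-\bar x\|\to 0$.

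The main obstacle is the passage from the scalar semismoothness$^*$ inequality to the matrix form above: one has to verify that the row-wise application is legitimate, namely that $\varepsilon$ can be chosen uniformly for all $n$ coderivative directions $((B)_i^T,(A)_i^T)$ used in a given iteration, and that the resulting neighborhood size does not shrink with the particular choice of $(A,B)\in\mathcal{A}_{\mathrm{reg}}F(\hat x_k,\hat y_k)$. This uniformity is exactly what the positive homogeneity of $D^*F(\hat x_k,\hat y_k)$ and the definition of semismoothness$^*$ in terms of normalized directions $(u,v)/\|(u,v)\|$ are designed to provide; I would invoke that homogeneity explicitly before summing, and then handle the induction on $k$ by choosing $\delta$ so small that $\sqrt 2\,\kappa\,\ell\,\varepsilon<1$ throughout.
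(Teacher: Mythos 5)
The paper does not prove this theorem; it is quoted directly from Gfrerer and Outrata \cite{GO2021} in the introduction. The closest the paper comes to a proof of this result is Lemma~\ref{lemMatrices} and its use in Theorem~\ref{thmGridErrorUniform}, which transplant exactly the argument you sketch to the path-following setting. Your decomposition $x_{k+1}-\bar x = A^{-1}\bigl(A(\hat x_k-\bar x)-B\hat y_k\bigr)$, the row-wise application of the semismoothness$^*$ estimate to each pair $\bigl((B)_i^T,(A)_i^T\bigr)\in\gph\,D^*F(\hat x_k,\hat y_k)$, and the aggregation to the Frobenius norm $\|(A\mid B)\|_F$ all agree with the paper's proof of Lemma~\ref{lemMatrices}, which the authors themselves take from \cite[Proposition 4.3]{GO2022}. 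So your proposal is correct and matches the approach behind the cited theorem and behind the paper's own adaptation of it.

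Two small points. The ``main obstacle'' you raise at the end is not actually an obstacle: in the definition of semismoothness$^*$, the radius $\delta$ is chosen as a function of $\varepsilon$ \emph{before} the quantifier over $(y^*,x^*)\in\gph\,D^*F(x,y)$, so once $(\hat x_k,\hat y_k)$ lies in the $\delta$-ball the inequality is available simultaneously for all $n$ row pairs; no appeal to positive homogeneity is needed, and the neighborhood does not depend on the choice of $(A,B)$. Second, with the paper's max-norm convention $\|(a,b)\|=\max\{\|a\|,\|b\|\}$, the tighter route is to keep $\|(\hat x_k-\bar x,\hat y_k)\|$ as the distance in the estimate rather than replacing it by $\|\hat x_k-\bar x\|+\|\hat y_k\|$; this avoids the extraneous $\sqrt2$ (which would only be the right factor under the Euclidean product norm) and yields $\|x_{k+1}-\bar x\|\le\varepsilon\,\kappa\,\ell\,\|x_k-\bar x\|$ directly, exactly as in Lemma~\ref{lemMatrices}. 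Neither issue affects the superlinear conclusion.
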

Let us note that $( x_k)$  converges superlinearly to $\bx$, means that 
\begin{eqnarray*}
	\lim_{k\to \infty} \frac{\Vert x_{k+1}-\bx\Vert}{\Vert x_k-\bx\Vert}=0.
\end{eqnarray*}
Their work continued into several papers \cite{GMOV2023, GO2022, GO2023, GOV2022}.

The semismooth$^*$ method builds on the concept of semismoothness$^*$, which ensures local superlinear convergence under appropriate regularity conditions. This method generalizes the traditional Newton method by incorporating the set-valued nature of the problem, allowing it to handle more complex scenarios typically encountered in optimization and equilibrium models.

A. Dontchev, M. Krastanov, R. Rockafellar, and V. Veliov studied in \cite{DKRV2013} parametric generalized equations (GEs) in the form
\begin{eqnarray}
	\label{eqVarInequality}
	0 \in f(t,x(t)) + N_{K}(x(t)) \quad \text{for each} \quad t \in [0, T],
\end{eqnarray}
where $T > 0$, $f$ is a single-valued mapping between $\mathbb{R} \times \mathbb{R}^n$ and $\mathbb{R}^m$, $K$ is a closed convex subset of $\mathbb{R}^n$, and $N_K$ is the normal cone mapping to the set $K$. They introduced the two-step path-following method for \eqref{eqVarInequality} in the form
\begin{eqnarray*}
	\begin{cases}
		f(t_k, x_k) + h \nabla_t f(t_k, x_k) + \nabla_x f(t_k, x_k) (u_{k+1} - x_k) + N_K(u_{k+1}) \ni 0, \\
		f(t_{k+1}, u_{k+1}) + \nabla_t f(t_{k+1}, u_{k+1}) (x_{k+1} - u_{k+1}) + N_K(x_{k+1}) \ni 0,
	\end{cases}
\end{eqnarray*}
where $h$ is a   discretization step and $x_0$ equals $x(0)$.
This method is designed to track the solution trajectories of \eqref{eqVarInequality} effectively. This method is particularly useful for solving problems, where the solution mapping is set-valued and the system exhibits strong regularity. The method is a combination of Euler prediction and Newton correction steps, providing high accuracy in tracking solution trajectories over the interval.

The method was extended by R. Cibulka and the first author in \cite{CR2018}, for general set-valued mappings $F$ instead of $N_K$ in \eqref{eqVarInequality} and under a weaker regularity assumption. Finally, the path-following method was extended for the problem called Differential generalized equation in \cite{CDKV2018} and \cite{CPR2019} as a combination of the Euler method and one-step the path-following method.

We introduce two one-step path-following methods based on the semismooth$^*$ approach for the problem of finding $ x: \mathbb{R} \longrightarrow \mathbb{R}^n $ such that
\begin{eqnarray}
	 	\label{eqInclusionParametric}
0 \in F(t, x(t)) \quad \text{for each} \quad t \in [0, T],
\end{eqnarray}
where $ T > 0 $ and $ F $ is a set-valued mapping between $ \mathbb{R} \times \mathbb{R}^n $ and $ \mathbb{R}^n $. In the first case, we focus on mappings $ F $ whose graphs have the uniform semismoothness$^*$ property along continuous paths. This results in a uniform grid error over the entire interval. In the second case, we focus on mappings that have the (non-uniform) semismooth$^*$ property at each point of a continuous path.

The paper is organized as follows. Section 2 provides a comprehensive background in variational analysis, covering the essential concepts required for understanding the subsequent sections.

Section 3 introduces the uniform properties for sets and mappings in the spirit of the semismooth$^*$ property, characterizing them in terms of standard (regular and limiting) coderivatives, and thoroughly investigates their relationship to semismooth$^*$ sets from previous studies. Roughly speaking, by the word "uniform" we mean that the constants as well as the size of the neighborhoods, appearing in the corresponding definitions, remain the same for a certain set of mappings and/or points. This section also presents some basic classes of the properties, providing a foundation for the main results collected in the following sections.

In Section 4, we study strong metric subregularity at/around the reference point of set-valued mappings. In particular, we focus on sufficient conditions for uniform strong metric subregularity at/around the reference point. Conditions ensuring uniform strong subregularity along continuous paths are obtained. 

Section 5 introduces the path-following method for the problem \eqref{eqInclusionParametric} under uniform semismooth$^*$ assumptions. The sufficient conditions for a grid error of order $ \mathcal{O}(h) $ are obtained. This section details the algorithmic steps and the theoretical foundation for ensuring accuracy and convergence of the method.

In Section 6, we  introduce the one-step path-following method for the problem \eqref{eqInclusionParametric} under (non-uniform) semismooth$^*$ assumptions. Under weaker conditions, we obtained  a grid error of order $ \mathcal{O}(h) $  for one step. This section emphasizes the adaptability of the method to handle variations in the semismooth$^*$ properties at different points along the path.

Section 7 focuses on applying path-following methods to the electric circuit problems. In this section, we will also check the theoretical results. By using these methods on the circuit problems, we aim to  show how our theoretical findings can be practically applied. Our implementation is available from the MATLAB central server at 
\url{https://www.mathworks.com/matlabcentral/fileexchange/174255} .

 \section{Preliminaries}
We denote the metric spaces by $X$, $Y$, $P$ and the metric in them by $d$. Associated with these metrics, the closed ball and open ball of radius $\delta$ centered at a point $x \in X$ are defined respectively as
 $$
\ball_X[x, \delta] := \{u \in X : d(x, u) \leq \delta\}
\quad \text{and}\quad
 \ball_X(x, \delta) = \{u \in X : d(x, u) < \delta\}.
 $$

 The distance from a point $x \in X$ to a set ${A}$ is denoted by $\dist(x, {A})$ and is defined as the shortest distance between $x$ and any point in ${A}$, expressed as $\dist(x, {A}) = \inf_{u \in {A}}d(u, x).$
 
 Banach spaces, which are a special type of metric space, where the metric is derived from a norm, are denoted by $(X, \|\cdot\|)$ and $(Y, \|\cdot\|)$.  The unit sphere and the closed unit ball in $X$, both centered at the origin, are denoted by $\mathbb{S}_X$ and $\mathbb{B}_X$, respectively.
 
 The graph of a set-valued mapping $F$, represented as $\gph\, F$, comprises all pairs $(x, y)$ such that $y \in F(x)$. Additionally, the domain of $F$, denoted by $\dom\, F$, includes all points $x$ for which the set $F(x)$ is nonempty, indicating the extent of the definition of $F$. The inverse of a set-valued mapping $F$, denoted by $F^{-1}$, is defined such that $y \in F(x)$ implies $x \in F^{-1}(y)$. This is expressed as $ F^{-1}(y) = \{x \in X \mid y \in F(x)\}.$
 
 The $n$-dimensional Euclidean space is denoted by $\mathbb{R}^n$. The norm of $x \in \mathbb{R}^n$ is referred to as the Euclidean norm. The norm in the Cartesian product of Euclidean spaces is defined by $$
 \|(x, y)\| := \max\{\|x\|, \|y\|\}
 \quad
 \text{and}
\quad
 \|(x, u, y)\| := \max\{\|x\|, \|u\|, \|y\|\}
 $$
 for $(x, u, y) \in \mathbb{R}^n \times \mathbb{R}^m \times \mathbb{R}^p$. The dot product in $\mathbb{R}^n$ is denoted by $\langle \cdot, \cdot \rangle$.
 
Matrices are represented by bold uppercase letters such as ${A}$ and ${B}$. The transpose of a matrix ${A}$ is represented by ${A}^T$, which reflects the matrix over its diagonal, turning rows into columns and vice versa. By $e_1,e_2,\dots, e_n$ we denote elements of canonical basis of $\mathbb{R}^n$.

If a square matrix ${A}$ is non-singular, meaning it has an inverse, this inverse is denoted by ${A}^{-1}$. For clarity and specificity in certain contexts, the $i$-th row of matrix $A$ can be denoted as $(A)_i$, emphasizing the row-wise examination of matrix structures.  The notation $(A \mid B)$ denotes the horizontal concatenation of matrices $A$ and $B$ and $I_n$ represents the identity matrix of size $n \times n$.
 
We denote the Jacobian of a mapping $ f \colon \mathbb{R}^n \longrightarrow \mathbb{R}^n $ by $ \nabla f(x) $, and the derivative of a mapping $ p \colon \mathbb{R} \longrightarrow \mathbb{R}^n $ by $ \dot{p} $. The partial Jacobian with respect to $ x $ for a mapping $ f \colon \mathbb{R} \times \mathbb{R}^n \longrightarrow \mathbb{R}^n $ is denoted by $ \nabla_x f(t, x) $.

Further, we  utilize the fundamental concepts of modern variational analysis.

\begin{definition}Let $A$ be a set in $\mathbb{R}^s$ and let $\bar{x} \in A$.
\begin{itemize}
	\item[(i)] The \textit{tangent (contingent, Bouligand) cone} to $A$ at $\bar{x}$ is given by
	$$
	T_A(\bar{x}) := \mathop{\mathrm{Lim\,sup}}_{t \downarrow 0} \frac{A - \bar{x}}{t}.	$$
	\item[(ii)] The set
	$$
	\hat{N}_A(\bar{x}) := (T_A(\bar{x}))^\circ
	$$
	is the \textit{regular (Fréchet) normal cone} to $A$ at $\bar{x}$, and
	$$
	N_A(\bar{x}) := \mathop{\mathrm{Lim\,sup}}_{x \xrightarrow{A} \bar{x}} \hat{N}_A(x)
	$$
	is the \textit{limiting (Mordukhovich) normal cone} to $A$ at $\bar{x}$.
\end{itemize}

In this definition, the term ``$\limsup$'' refers to the Painlevé-Kuratowski \textit{outer (upper) set limit} and $K^\circ$ refers dual cone to $K$.
\end{definition} 
In \cite{GO2021}, new equivalent definition of semismoothness of a set at the reference point was introduced, referred to as semismoothness$^*$.

\begin{definition}Consider a set $A\subset\mathbb{R}^n$ and a point $\bx\in A$. We say that $A$ is
\emph{semismooth}$^*$ at $\bx$ if for each $\varepsilon>0$ there is $\delta>0$ such that
\begin{eqnarray*}
	\vert \left< x^*, x-\bx\right>\vert\leq \varepsilon \Vert x^*\Vert \Vert x-\bx\Vert\quad\text{for each}\quad x\in \ball_{\mathbb{R}^n}[\bx,\delta]\cap A\quad\text{and}\quad x^*\in N_A(x).
\end{eqnarray*}
\end{definition}

 The cones listed above allow us to describe the local behaviour of set-valued maps through various generalized derivatives. 
\begin{definition}
	 Consider a set-valued mapping $F : \mathbb{R}^n \rightrightarrows \mathbb{R}^m$ and let $(\bar{x}, \bar{y}) \in \mathrm{gph}\, F$.
	\begin{itemize}
		\item[\rm (i)] The set-valued mapping $DF(\bar{x}, \bar{y}) : \mathbb{R}^n \rightrightarrows \mathbb{R}^m$ given by $\mathrm{gph}\, DF(\bar{x}, \bar{y}) = T_{\mathrm{gph}\, F}(\bar{x}, \bar{y})$ is called the \textit{graphical derivative} of $F$ at $(\bar{x}, \bar{y})$.
	
		\item[\rm (ii)] The set-valued mapping $\hat{D}^*F(\bar{x}, \bar{y}) : \mathbb{R}^m \rightrightarrows \mathbb{R}^n$ defined by
		$$
		\mathrm{gph}\, \hat{D}^*F(\bar{x}, \bar{y}) = \{(y^*, x^*) \mid (x^*, -y^*) \in \hat{N}_{\mathrm{gph}\, F}(\bar{x}, \bar{y})\}
		$$
		is called the \textit{regular (Fréchet) coderivative} of $F$ at $(\bar{x}, \bar{y})$.
		\item[\rm (iii)] The set-valued mapping $D^*F(\bar{x}, \bar{y}) : \mathbb{R}^m \rightrightarrows \mathbb{R}^n$ defined by
		$$
		\mathrm{gph}\, D^*F(\bar{x}, \bar{y}) = \{(y^*, x^*) \mid (x^*, -y^*) \in N_{\mathrm{gph}\, F}(\bar{x}, \bar{y})\}
		$$
		is called the \textit{limiting (Mordukhovich) coderivative} of $F$ at $(\bar{x}, \bar{y})$.
	\end{itemize}
\end{definition}
Using limiting coderivative, we can define semismoothness$^*$ for set-valued mapping.
\begin{definition} Consider a set-valued mapping $F: \mathbb{R}^n\tto \mathbb{R}^m$ and a point $(\bx,\by)\in \gph\,F$. We say that $F$ is
	\emph{semismooth}$^*$ at $(\bx,\by)$ if for each $\varepsilon>0$ there is $\delta>0$ such that for each $(x,y)\in \big(\ball_{\mathbb{R}^n}[\bx,\delta]\times \ball_{\mathbb{R}^n}[\by,\delta]\big)\cap \gph\,F$ we have
	\begin{eqnarray*}
		\vert \left< x^*, x-\bx\right>-\left< y^*, y-\by\right>\vert\leq \varepsilon \Vert (x^*, y^*)\Vert \Vert (x,y)-(\bx,\by)\Vert\quad\text{for each}\quad (y^*,x^*)\in   \gph\,D^* F(x,y).
	\end{eqnarray*}
\end{definition}

In modern variational analysis, examining the regularity of set-valued mappings is essential for interpreting various mathematical models, especially in fields such as optimization, control theory, and economics. The regularity of these mappings refers to the characteristics that determine the local behaviour of the mapping around a point in its domain. Here, we focus solely on properties that are relevant to our research.
\begin{definition}\label{defRegularity}Let $X$ and $Y$ be metric spaces. 
		Let $F : X \rightrightarrows Y$ be a set-valued mapping and let $(\bar{x}, \bar{y}) \in \mathrm{gph}\, F$
		 be a given point. We say that $F$ is:
		\begin{enumerate}
			\item[\rm (i)]  \textit{metrically subregular} at $(\bar{x}, \bar{y})$ if there exists $\kappa \geq 0$ along with some \nn $U$ of $\bar{x}$ such that
			$$
			\mathrm{dist}(x, F^{-1}(\bar{y})) \leq \kappa\, \mathrm{dist}(\bar{y}, F(x)) \quad  \text{for each}\quad x \in U;
			$$
			\item[\rm (ii)] \textit{strongly metrically subregular} at $(\bar{x}, \bar{y})$ if it is metrically subregular at $(\bar{x}, \bar{y})$ and there exists a \nn $U$ of $\bar{x}$ such that $F^{-1}(\bar{y}) \cap U = \{\bar{x}\}$;
			\item[\rm (iii)] (strongly) \textit{metrically subregular around} $(\bar{x}, \bar{y}) \in \mathrm{gph}\, F$ if there is a \nn $W$ of $(\bar{x}, \bar{y})$ such that $F$ is (strongly) metrically subregular at every point $(x, y) \in \mathrm{gph}\, F \cap W$;
			
			\item[\rm (iv)] \textit{metrically regular} around $(\bar{x}, \bar{y})$ if there is $\kappa \geq 0$ together with \nns $U$ of $\bar{x}$ and $V$ of $\bar{y}$ such that
			$$
			\mathrm{dist}(x, F^{-1}(y)) \leq \kappa\, \mathrm{dist}(y, F(x)) \quad \text{for each}\quad (x, y) \in U \times V;
			$$
			\item[\rm (v)]  \textit{strongly metrically regular} around $(\bar{x}, \bar{y})$ if it is metrically regular around $(\bar{x}, \bar{y})$ and $F^{-1}$ has a single-valued localization around $(\bar{y}, \bar{x})$, i.e., there are open \nns $V$ of $\bar{y}$, $U$ of $\bar{x}$ and a mapping $h : V \longrightarrow \mathbb{R}^n$ with $h(\bar{y}) = \bar{x}$ such that $\mathrm{gph}\, F \cap (U \times V) = \{(h(y), y) \mid y \in V\}$.
		\end{enumerate}
\end{definition}
	Strong metric subregularity around the reference point was first introduced in \cite{GO2022}; also see \cite{DR2014, Ioffe2017} for the additional properties.

\section{Uniform semismoothness$^*$}

In this section, we present an extension of the properties for sets in the spirit of semismoothness$^*$.

Subsmoothness at the reference point and uniform subsmoothness for sets were first introduced in \cite{ADT2005}. Subsmoothness refers to a set being smooth in a localized sense at a specific point, while uniform subsmoothness extends this property to hold uniformly over a set of points.

We present a restricted version of uniform subsmoothness for sets, focusing on specific conditions under which uniform subsmoothness can be guaranteed. 

Moreover, we introduce the uniform version of semismooth$^*$ property for sets, which was initially introduced in \cite{GO2021}. This uniform version extends the semismooth$^*$ property to hold uniformly over a set, similar to the concept of uniform subsmoothness for sets.

\begin{definition}\label{defSemiSubA} Consider  sets $A\subset\mathbb{R}^n$, $U\subset A$, and a point $\bx\in A$. We say that $A$ is:
	\begin{enumerate}
		\item[\rm (i)]  \emph{subsmooth} at $\bx$ if for each $\varepsilon>0$ there is $\delta>0$ such that
		\begin{eqnarray*}
			\left< x^*, x-u\right>\leq \varepsilon \Vert x^*\Vert \Vert x-u\Vert\quad\text{for each}\quad x, u\in \ball_{\mathbb{R}^n}[\bx,\delta]\cap A\quad\text{and}\quad x^*\in N_A(x);
		\end{eqnarray*}
		
		\item[\rm (ii)]  \emph{uniformly subsmooth} on $U$ if for each $\varepsilon>0$ there is $\delta>0$ such that for each $v\in U$ we have
		\begin{eqnarray*}
			\left< x^*, x-u\right>\leq \varepsilon \Vert x^*\Vert \Vert x-u\Vert\quad\text{for each}\quad x, u\in \ball_{\mathbb{R}^n}[v,\delta]\cap A\quad\text{and}\quad x^*\in N_A(x);
		\end{eqnarray*}
		\item[\rm (iii)]  \emph{uniformly semismooth}$^*$ on $U$ if for each $\varepsilon>0$ there is $\delta>0$ such that for each $u\in U$ we have
		\begin{eqnarray*}
			\vert \left< x^*, x-u\right>\vert\leq \varepsilon \Vert x^*\Vert \Vert x-u\Vert\quad\text{for each}\quad x\in \ball_{\mathbb{R}^n}[u,\delta]\cap A\quad\text{and}\quad x^*\in N_A(x).
		\end{eqnarray*}
	\end{enumerate}
\end{definition}
	
A. Jourani in \cite[Theorem 2]{Jourani2007} showed that every subanalytical  set is  semismooth$^*$ at  each point of $x\in A$. 
A subset $ A $ of $ \mathbb{R}^n $ is called subanalytic if for each point of $ A $, there is a \nn $U$ such that $ A \cap U $ can be represented as the projection of a relatively compact semianalytic set. In other words, there is a real analytic manifold $ N $ and a relatively compact semianalytic subset $ A $ of $ \mathbb{R}^n \times N $ such that $ X \cap U = \pi(A) $, where $ \pi: \mathbb{R}^n \times N \longrightarrow \mathbb{R}^n $ is the projection map. A subset $ A $ of $ \mathbb{R}^n $ is defined as semianalytic if for every point $ a \in \mathbb{R}^n $, there exists a \nn $ V $ such that 
$$
 A\cap V = \bigcup_{i=1}^p \bigcap_{j=1}^q \{ x \in V \mid f_{ij}(x) \ \sigma_{ij} \ 0 \},
$$
where each $ f_{ij} $ is a real analytic function on $ V $ and each $ \sigma_{ij} $ is either $= $ or $ > $.

\begin{proposition} Consider a closed set $A\subset \mathbb{R}^n$. If the set $A$ is subanalytical, then for each $x\in A$ the set $A$ is semismooth$^*$ at $x$.
\end{proposition}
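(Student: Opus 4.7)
The plan is to deduce the statement directly from Jourani's theorem cited in the paragraph preceding the proposition. The semismoothness$^*$ property for a set $A$ at $\bar x$, as given in Definition~2.2, requires that for every $\varepsilon>0$ there exists $\delta>0$ such that
\begin{equation*}
|\langle x^*, x-\bar x\rangle| \leq \varepsilon\,\|x^*\|\,\|x-\bar x\|
\quad\text{for all } x\in \ball_{\mathbb{R}^n}[\bar x,\delta]\cap A\text{ and } x^*\in N_A(x).
\end{equation*}
The key point is that Jourani's Theorem~2 in \cite{Jourani2007} establishes precisely this estimate for closed subanalytic sets, although possibly phrased in terms of the Mordukhovich limiting normal cone under a different name. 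So the proof reduces to a matching-of-definitions argument.

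Concretely, I would proceed in two short steps. First, I would recall the notion of subanalyticity reviewed in the paragraph preceding the proposition, emphasizing that closedness of $A$ and the local projective description ensure that $N_A(x)$ is the limiting normal cone used by Jourani. Second, I would cite \cite[Theorem~2]{Jourani2007} to obtain, for the given $\bar x\in A$ and any $\varepsilon>0$, a radius $\delta>0$ for which the required inequality holds on $\ball_{\mathbb{R}^n}[\bar x,\delta]\cap A$ for every limiting normal $x^*\in N_A(x)$. This is exactly the defining condition of semismoothness$^*$ of $A$ at $\bar x$, and since $\bar x\in A$ was arbitrary, the claim is proved.

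The only nontrivial aspect I would watch for is a notational mismatch: Jourani may state his estimate for normal vectors centered at $\bar x$ rather than at a moving base point $x$, or he may use the Fréchet normal cone combined with a limiting passage. In either case one recovers the required inequality for the limiting normal cone either directly or by a standard outer-limit argument using $N_A(\bar x)=\mathop{\rm Lim\,sup}_{x\xrightarrow{A}\bar x}\hat N_A(x)$ together with uniform control of the estimate in a neighborhood. This reconciliation of definitions is the only potential obstacle, and it is purely bookkeeping; no new analytic input is needed beyond Jourani's result.
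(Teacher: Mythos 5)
Your proposal is correct and matches the paper's approach exactly: the paper gives no independent proof of this proposition, simply restating Jourani's Theorem~2 from \cite{Jourani2007} after introducing the definition of subanalyticity. Your additional care about reconciling the normal-cone conventions is sound bookkeeping but is not spelled out in the paper either.
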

The following lemma comes from the previous proposition.
\begin{lemma}
	 Consider a set $A\subset \mathbb{R}^n$.  If the $A$ is closed and convex, then is subanalytical. In particular, for each $x\in A$ the set  $A$ is semismooth$^*$ at $x$.
\end{lemma}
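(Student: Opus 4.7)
The plan is to reduce the semismoothness$^*$ conclusion to the subanalyticity claim and then concentrate on the latter. Once $A$ has been shown to be subanalytic, the final ``in particular'' clause follows at once from the preceding Proposition applied at each $x \in A$, so no further work is required after that step.

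For the subanalyticity itself, I would work locally around each $a \in A$. Because $A$ is closed and convex, it coincides with the $0$-sublevel set of the distance function $d_A$, which is convex and globally $1$-Lipschitz, and it is representable through its supporting hyperplanes at the boundary points of $A$. If in a sufficiently small neighborhood $V$ of $a$ the intersection $A \cap V$ can be cast in the semianalytic form
\[
A \cap V \;=\; \bigcup_{i=1}^{p} \bigcap_{j=1}^{q} \{\, x \in V : f_{ij}(x)\; \sigma_{ij}\; 0\,\},
\]
with finitely many analytic $f_{ij}$ and $\sigma_{ij} \in \{=,>\}$, then $A$ is semianalytic near $a$ and hence subanalytic. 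An auxiliary reduction would try to exploit the extreme-point / supporting-hyperplane representation of closed convex sets together with local compactness of $\partial A$ around $a$ to extract such a finite description from the a priori uncountable family of supporting half-spaces.

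The main obstacle, as I see it, is precisely this subanalyticity step: a general closed convex set in $\mathbb{R}^n$ has no a priori reason to have a boundary describable by only finitely many analytic inequalities near each point, and the standard definitions of semianalytic and subanalytic sets do not accommodate uncountable intersections of supporting half-spaces. A cleaner fallback, which I would pursue if the reduction above stalls, is to derive semismoothness$^*$ directly from the monotonicity of the normal cone mapping $N_A$ together with the projection identity $x = \mathrm{Proj}_A(x + t x^*)$ valid for $x^* \in N_A(x)$ and $t \geq 0$: monotonicity already gives $\langle x^*, x - \bar x\rangle \geq 0$, while a local supporting-hyperplane / affine approximation of $\partial A$ at $\bar x$ supplies the matching upper bound $\langle x^*, x - \bar x\rangle \leq \varepsilon\|x^*\|\|x-\bar x\|$, bypassing the subanalytic detour entirely and yielding the semismooth$^*$ conclusion directly.
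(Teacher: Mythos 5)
You have correctly put your finger on the sore spot: the assertion that every closed convex set is subanalytic is not established in the paper (this lemma is stated without proof), and it is in fact false in general. The standard counterexample is the epigraph of a convex $C^\infty$ function that is flat but not identically zero near a boundary point, e.g.\ $f(x)=e^{-1/x}$ for $x\in(0,\tfrac12]$ extended by $f\equiv 0$ for $x\le 0$; its graph contains no analytic arc through the origin because $e^{-1/x}$ decays faster than any power, so no Puiseux--type parametrization exists, and therefore the epigraph is not subanalytic. Your first approach (finitely many analytic inequalities from supporting hyperplanes) is bound to stall precisely because there is no such finite description, and the reduction to extreme points / compactness of $\partial A$ does not rescue it. Your instinct to abandon this route is the right call.

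Your fallback is the right idea but is left vague where it matters; the clean way to close it is a compactness argument that avoids any affine approximation of $\partial A$. Suppose $A$ is closed convex, $\bar x\in A$, and semismoothness$^*$ fails. Then there are $\varepsilon>0$, $x_k\in A$ with $x_k\to\bar x$, $x_k\ne\bar x$, and $x_k^*\in N_A(x_k)$ with $\|x_k^*\|=1$ such that $|\langle x_k^*,x_k-\bar x\rangle|>\varepsilon\|x_k-\bar x\|$. Convexity gives $\langle x_k^*,\bar x-x_k\rangle\le 0$, so the absolute value may be dropped. Passing to subsequences, let $x_k^*\to x^*$ and $(x_k-\bar x)/\|x_k-\bar x\|\to u$. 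Then $\langle x^*,u\rangle\ge\varepsilon>0$, yet $u\in T_A(\bar x)$ and, by outer semicontinuity of the (convex) normal cone map, $x^*\in N_A(\bar x)=T_A(\bar x)^\circ$, which forces $\langle x^*,u\rangle\le 0$, a contradiction. This is exactly the direct argument used in \cite{GO2021} (their Proposition~3.4), and it yields the ``in particular'' conclusion of the lemma without any appeal to subanalyticity; the subanalyticity clause of the lemma should simply be dropped or restricted (e.g.\ to polyhedral or semialgebraic convex sets).
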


If finitely many closed sets are semismooth$^*$ at the same reference point, then their union is also semismooth$^*$ at the reference point, see \cite[Proposition 3.5]{GO2021}.
\begin{lemma}\label{lemUnionSemismooth}
	Consider closed sets $A_i \subset \mathbb{R}^n$ for $i = 1, 2, 3, \dots, N$ and a point $\bx \in A := \cup_{i=1}^N A_i$. If each set $A_i$, with $\bx \in A_i$, is semismooth$^*$ at $\bx$, then $A$ is semismooth$^*$ at $\bx$, where $A_i$ is semismooth$^*$ at $\bx$.
\end{lemma}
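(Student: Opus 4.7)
The plan is to reduce the union case to the individual cases via the local inclusion
\[
N_A(x) \;\subseteq\; \bigcup_{i\,:\,x\in A_i} N_{A_i}(x),
\]
for $x$ sufficiently close to $\bx$, after first using closedness to discard those indices $j$ with $\bx\notin A_j$.

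First I fix $\varepsilon>0$ and partition the index set $\{1,\dots,N\}$ into $I=\{i:\bx\in A_i\}$ and $J=\{j:\bx\notin A_j\}$. For $j\in J$, since $A_j$ is closed, I pick $\delta_j>0$ with $\ball_{\mathbb{R}^n}[\bx,\delta_j]\cap A_j=\emptyset$. For $i\in I$, the assumed semismoothness$^*$ of $A_i$ at $\bx$ supplies a $\delta_i>0$ yielding the inequality $|\langle x^*,x-\bx\rangle|\le \varepsilon\|x^*\|\|x-\bx\|$ for $x\in\ball_{\mathbb{R}^n}[\bx,\delta_i]\cap A_i$ and $x^*\in N_{A_i}(x)$. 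Setting $\delta:=\min\{\delta_k:k=1,\dots,N\}$ guarantees that any $x\in\ball_{\mathbb{R}^n}[\bx,\delta]\cap A$ lies only in the sets $A_i$ with $i\in I$.

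The core step is then to show, for such $x$, that every $x^*\in N_A(x)$ actually belongs to $N_{A_{i_0}}(x)$ for some $i_0\in I$ with $x\in A_{i_0}$. I unfold the limiting normal cone: pick sequences $x_k\xrightarrow{A}x$ and $x_k^*\to x^*$ with $x_k^*\in\hat{N}_A(x_k)$. Each $x_k$ belongs to at least one $A_{i_k}$, and since the index set is finite I pass to a subsequence on which $i_k$ is a constant $i_0$. Closedness of $A_{i_0}$ forces $x\in A_{i_0}$ (hence $i_0\in I$), and the pointwise inclusion $\hat{N}_A(x_k)\subseteq \hat{N}_{A_{i_0}}(x_k)$ (which follows from $T_{A_{i_0}}(x_k)\subseteq T_A(x_k)$ by polarity) combined with the outer-limit definition of $N_{A_{i_0}}$ yields $x^*\in N_{A_{i_0}}(x)$. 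Applying the semismoothness$^*$ inequality for $A_{i_0}$ at $\bx$ then gives the required bound, completing the proof.

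The main obstacle is the normal cone inclusion in the third paragraph: one has to justify that taking limits over $x_k\in A$ does not introduce normals from an $A_i$ that only meets $A$ arbitrarily close to but not at $x$; this is precisely where finiteness of the cover and closedness of each $A_i$ enter, ensuring that a subsequence stays inside a single $A_{i_0}$ whose closedness transfers the limit point back into it. The rest of the argument is bookkeeping of neighborhoods and a direct appeal to Definition \ref{defSemiSubA} (i) (the $u=\bx$ version, i.e.\ the non-uniform semismooth$^*$ property).
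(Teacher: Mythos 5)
Your proof is correct, and since the paper merely cites this as \cite[Proposition 3.5]{GO2021} without reproducing the argument, there is no in-paper proof to compare against; what you wrote is the natural and, to my knowledge, the standard argument. The key steps all hold up: closedness lets you discard the $A_j$ with $\bx\notin A_j$ by shrinking the radius; the sequential definition of $N_A(x)$ combined with finiteness of the index set and a pigeonhole pass to a subsequence yields $x^*\in N_{A_{i_0}}(x)$ for some $i_0$ with $x\in A_{i_0}$ (via $T_{A_{i_0}}(x_k)\subseteq T_A(x_k)$ and hence $\hat N_A(x_k)\subseteq\hat N_{A_{i_0}}(x_k)$, then taking the outer limit inside $A_{i_0}$); and the radius choice forces $i_0\in I$, so the semismooth$^*$ inequality for $A_{i_0}$ at $\bx$ applies directly. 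One could state the intermediate inclusion $N_A(x)\subseteq\bigcup_{i:\,x\in A_i}N_{A_i}(x)$ as a self-contained fact and cite it (it appears, e.g., in Mordukhovich's and Rockafellar--Wets's texts), but your sequential derivation is self-contained and equally valid.
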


This lemma gives us that every finite union of closed subanalytical sets is semismooth$^*$ at every point. In particular, every  closed polyhedral set has the same property.

The following remark comes from \cite{GO2021}.
\begin{remark}
	Let us note that, by Lemma \ref{lemUnionSemismooth}, the union of finitely many closed convex sets is semismooth* at every point. We get:
	\begin{enumerate}
		\item a set-valued mapping $ F: \mathbb{R}^n \rightrightarrows \mathbb{R}^m $, with a closed convex graph, is semismooth* at every point $(\bar{x}, \bar{y}) \in \operatorname{gph} F $;
		\item a set-valued mapping $ F: \mathbb{R}^n \rightrightarrows \mathbb{R}^m $, with a polyhedral graph, is semismooth* at every point $(\bar{x}, \bar{y}) \in \operatorname{gph} F $. Specifically, for every closed convex polyhedral set $ D \subseteq \mathbb{R}^s $, the normal cone mapping $ N_D $ is semismooth* at every point of its graph.
	\end{enumerate}
\end{remark}

Limiting coderivative of a single-valued mapping can be described by its gradient for smooth case or by Bouligand subdifferential in Lipschitz case, see \cite[8.34 Example]{RW1998}.
\begin{remark} \label{remCodevSingle}
If a single-valued mapping $f:\mathbb{R}^n \to \mathbb{R}^n$ is continuously differentiable at ${x}$, then
$$
\gph\,D^*f({x}, f({x})) = \left\{ (v, \nabla f({x})^T v) : v \in \mathbb{R}^n \right\}.
$$
If $f$ is Lipschitz continuous around ${x}$, then
$$
\left\{ (v, A^T v) : v \in \mathbb{R}^n \text{ and } A \in \partial_B f({x}) \right\} \subset \gph\,D^*f({x}, f({x})),
$$
where $\partial_B f({x}) := \left\{ A \in \mathbb{R}^{n \times n} : \nabla f(x_k) \to A, x_k \to {x}, \text{ and } \{x_k\} \subset \Omega_f \right\}$, and the set $\Omega_f\subset\mathbb{R}^n$ contains points, where $f$ is Fréchet differentiable.
\end{remark}

\begin{lemma}
	\label{lemContinousDifSemismooth}
	Consider a single-valued mapping $ f: \mathbb{R}^n \longrightarrow \mathbb{R}^n $ which is continuously differentiable  around  $ \bx\in \mathbb{R}^n$ and a constant $\varepsilon>0$.  If there is  $ \delta > 0 $ such that for each  $x \in \ball_{\mathbb{R}^n}[\bx,\delta]$ we have
	\begin{eqnarray*}
		\| f(x) - f(\bx) - \nabla f (x) (x - \bx) \| \leq \varepsilon \| x - \bx \|,
	\end{eqnarray*}
	then for each $x\in \ball_{\mathbb{R}^n}[\bx,\delta]$ and  each $(y^*,x^*)\in \gph\,D^* f(x,f(x))$ we have
	\begin{eqnarray*}
		\vert	\langle x^*, x - \bx \rangle - \langle y^*, f(x) - f(\bx) \rangle\vert\leq \varepsilon  \|( x^*, y^* )\| \|(x, f(x)) - (\bx, f(\bx))\| .
	\end{eqnarray*}
\end{lemma}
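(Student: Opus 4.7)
The plan is to reduce the coderivative inequality to the given quantitative first-order approximation estimate using the explicit description of $\gph\,D^*f$ for continuously differentiable single-valued mappings.

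First, I would invoke Remark \ref{remCodevSingle}: since $f$ is continuously differentiable around $\bx$, for any $x$ in a sufficiently small neighborhood we have
\begin{eqnarray*}
\gph\,D^*f(x,f(x)) = \{(v, \nabla f(x)^T v) : v \in \mathbb{R}^n\}.
\end{eqnarray*}
Thus every $(y^*,x^*) \in \gph\,D^*f(x,f(x))$ satisfies $x^* = \nabla f(x)^T y^*$. Substituting this, the adjoint identity $\langle \nabla f(x)^T y^*, x-\bx\rangle = \langle y^*, \nabla f(x)(x-\bx)\rangle$ lets me rewrite the quantity of interest as
\begin{eqnarray*}
\langle x^*, x-\bx\rangle - \langle y^*, f(x)-f(\bx)\rangle = \langle y^*,\, \nabla f(x)(x-\bx) - (f(x)-f(\bx))\rangle.
\end{eqnarray*}

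Next, I would apply the Cauchy--Schwarz inequality followed by the hypothesis, which yields
\begin{eqnarray*}
|\langle x^*, x-\bx\rangle - \langle y^*, f(x)-f(\bx)\rangle| \leq \|y^*\|\cdot \|f(x)-f(\bx) - \nabla f(x)(x-\bx)\| \leq \varepsilon\, \|y^*\|\, \|x-\bx\|.
\end{eqnarray*}
Finally, because the product norm used in the paper is the max norm, $\|y^*\| \leq \|(x^*, y^*)\|$ and $\|x-\bx\| \leq \|(x, f(x))-(\bx, f(\bx))\|$, so the upper bound is dominated by $\varepsilon\,\|(x^*, y^*)\|\,\|(x, f(x))-(\bx, f(\bx))\|$, which is exactly the claim.

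There is no real obstacle: the only non-cosmetic ingredient is the explicit description of $D^*f$ for a $C^1$ mapping from Remark \ref{remCodevSingle}, after which the statement follows from a single rearrangement, Cauchy--Schwarz, and the monotonicity of the max norm on products.
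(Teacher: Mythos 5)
Your proof is correct and follows essentially the same route as the paper's: invoke Remark \ref{remCodevSingle} to write $x^*=\nabla f(x)^T y^*$, move the Jacobian to the other side of the inner product, apply Cauchy--Schwarz and the hypothesis, and finish with the monotonicity of the max norm on products. The only minor cosmetic improvement is that you insert the absolute value explicitly at the Cauchy--Schwarz step, whereas the paper's chain of inequalities only writes the one-sided bound, but the content is the same.
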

\begin{proof} Find $\delta>0$. Fix any  $x \in \ball_{\mathbb{R}^n}[\bx,\delta]$ and   any $(y^*, x^*) \in \gph\, D^*f(x,f(x))$. Then, by Remark \ref{remCodevSingle}, we get $x^* =\nabla f(x)^T y^*$ and we obtain
	\begin{eqnarray*}
		&&	\langle x^*, x - \bx \rangle - \langle y^*, f(x) - f(\bx) \rangle = \langle y^*, \nabla f(x)(x - \bx) - (f(x) - f(\bx)) \rangle \\
		&\leq& \|y^*\| \|\nabla f(x) (x - \bx) - (f(x) - f(\bx))\| \leq\varepsilon \|y^*\| \|x - \bx\| \leq  \varepsilon \|( x^*, y^* )\|\|(x, f(x)) - (\bx, f(\bx))\|.
	\end{eqnarray*}
\end{proof}

If $\gph\,F$ of a set-valued mapping $F:\mathbb{R}^n\tto \mathbb{R}^m$ has a property from Definition \ref{defSemiSubA}, then we say that the mapping $F$ has the same property.
\begin{definition} Consider  a set-valued mapping $F:\mathbb{R}^n\tto \mathbb{R}^m$, a set  $W\subset\gph\,F$, and a point $(\bx, y)\in \gph\,F$. We say that $F$ is:
	\begin{enumerate}
		\item[\rm (i)] \emph{subsmooth} at $(\bx,\by)$ if for each $\varepsilon>0$ there is $\delta>0$ such that for each $(x,y), (u,z)\in \big(\ball_{\mathbb{R}^n}[\bx,\delta]\times \ball_{\mathbb{R}^n}[\by,\delta]\big)\cap \gph\,F$ we have
		\begin{eqnarray*}
			\left< x^*, x-u\right>-\left< y^*, y-z\right>\leq \varepsilon \Vert (x^*, y^*)\Vert \Vert (x,y)-(u,z)\Vert \quad\text{for each}\quad (y^*,x^*)\in   \gph\,D^* F(x,y);
		\end{eqnarray*}

		\item[\rm (ii)]  \emph{uniformly subsmooth} on $W$ if for each $\varepsilon>0$ there is $\delta>0$ such that for each $(v, w)\in W$ and  each $(x,y), (u,z)\in \big(\ball_{\mathbb{R}^n}[v,\delta]\times \ball_{\mathbb{R}^n}[w,\delta]\big)\cap \gph\,F$ we have
		\begin{eqnarray*}
			\left< x^*, x-u\right>-\left< y^*, y-z\right>\leq \varepsilon \Vert (x^*, y^*)\Vert \Vert (x,y)-(u,z)\Vert\quad\text{for each}\quad (y^*,x^*)\in  \gph\,D^* F(x,y);
		\end{eqnarray*}
				\item[\rm (iii)] \emph{uniformly semismooth}$^*$ on $W$ if for each $\varepsilon>0$ there is $\delta>0$ such that for each $(u, z)\in W$ and each $(x,y)\in \big(\ball_{\mathbb{R}^n}[u,\delta]\times \ball_{\mathbb{R}^n}[z,\delta]\big)\cap \gph\,F$ we have
		\begin{eqnarray*}
			\vert \left< x^*, x-z=u\right>-\left< y^*, y-z\right>\vert\leq \varepsilon \Vert (x^*, y^*)\Vert \Vert (x,y)-(u,z)\Vert\quad\text{for each}\quad (y^*,x^*)\in   \gph\,D^* F(x,y).
		\end{eqnarray*}
	\end{enumerate}
\end{definition}

\begin{remark} \label{remInv}
	Note that for a set-valued mapping $F:\mathbb{R}^n \tto \mathbb{R}^n$ we have $(y^*, x^*) \in D^* F(x, y)$ if and only if $(-x^*, -y^*) \in D^* F^{-1}(y, x)$.
\end{remark}

If the inverse of a set-valued mapping $F$ has a continuously differentiable localization, then $F$ possesses the uniform semismoothness$^*$ property along this localization.
\begin{proposition}
	\label{proInverseSmooth}
	Consider a set-valued mapping $F:\mathbb{R}^n \tto\mathbb{R}^n$,  a compact set $K\subset\mathbb{R}^n$, a number $\alpha>0,$ and a single-valued mapping $f : (K+\alpha \ball_{\mathbb{R}^n}) \longrightarrow \mathbb{R}^n$ such that $\mathrm{gph}\, F \cap (U \times (K+\alpha\ball_{\mathbb{R}^n})) = \{(f(y), y) : y \in K+\alpha \ball_{\mathbb{R}^n}\}$ for  $U:=f^{-1}(K+\alpha\ball_{\mathbb{R}^n})$. Moreover, assume that $f$ is continuously differentiable on $K$. Then $F$ is uniformly semismooth$^*$ on $U\times K$.
\end{proposition}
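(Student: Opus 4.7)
The plan is to translate the claim into a statement about the single-valued localization $f$ of $F^{-1}$ and then exploit continuous differentiability of $f$ together with compactness of $K$ to obtain a uniform first-order approximation.

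First, given $\varepsilon>0$, I would use compactness of $K$ and continuous differentiability of $f$ to select $\delta\in(0,\alpha/2)$ such that $\Vert \nabla f(y_1)-\nabla f(y_2)\Vert \leq \varepsilon$ whenever $y_1,y_2\in K+(\alpha/2)\ball_{\mathbb{R}^n}$ satisfy $\Vert y_1-y_2\Vert\leq \delta$. This is possible because $\nabla f$ is continuous on a neighborhood of $K$ and therefore uniformly continuous on the compact enlargement $K+(\alpha/2)\ball_{\mathbb{R}^n}$. The role of this $\delta$ is to deliver a uniform linearization error for $f$ around every base point in $K$.

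Next, fix $(u,z)\in (U\times K)\cap \gph\,F$; the localization identity then forces $u=f(z)$. For any $(x,y)\in \gph\,F$ with $x\in \ball_{\mathbb{R}^n}[u,\delta]$ and $y\in \ball_{\mathbb{R}^n}[z,\delta]$, since $\delta<\alpha$ the point $y$ lies in $K+\alpha\ball_{\mathbb{R}^n}$, so the localization identity yields $x=f(y)$. Now take any $(y^*,x^*)\in \gph\,D^*F(x,y)$. Remark \ref{remInv} gives $(-x^*,-y^*)\in \gph\,D^*F^{-1}(y,x)$, and because $\gph\,F^{-1}$ coincides with $\gph\,f$ in a neighborhood of $(y,f(y))$, the normal cones and hence the coderivatives agree there. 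By Remark \ref{remCodevSingle} one then has $\gph\,D^*f(y,f(y))=\{(v,\nabla f(y)^T v):v\in\mathbb{R}^n\}$, and consequently $y^*=\nabla f(y)^T x^*$.

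The remaining step is a routine mean value estimate. Since the segment joining $z$ and $y$ lies inside $K+(\alpha/2)\ball_{\mathbb{R}^n}$, the identity
\[ f(y)-f(z)-\nabla f(y)(y-z) = \int_0^1 [\nabla f(z+t(y-z))-\nabla f(y)](y-z)\,dt, \]
together with the uniform bound on the difference of gradients, yields $\Vert f(y)-f(z)-\nabla f(y)(y-z)\Vert \leq \varepsilon\Vert y-z\Vert$. Substituting $x=f(y)$, $u=f(z)$, and $y^*=\nabla f(y)^T x^*$ into the target expression gives $\langle x^*,x-u\rangle-\langle y^*,y-z\rangle = \langle x^*, f(y)-f(z)-\nabla f(y)(y-z)\rangle$, whose absolute value is bounded by $\varepsilon\Vert x^*\Vert \Vert y-z\Vert\leq \varepsilon\Vert (x^*,y^*)\Vert\Vert (x,y)-(u,z)\Vert$, as required. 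The main obstacle is precisely the uniformity across $z\in K$, which compactness of $K$ resolves through uniform continuity of $\nabla f$; once that is secured, the coderivative calculus and the mean value estimate combine in a straightforward way.
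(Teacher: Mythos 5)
Your proof is correct and follows essentially the same route as the paper's: establish a uniform first-order Taylor estimate $\|f(y)-f(z)-\nabla f(y)(y-z)\|\le\varepsilon\|y-z\|$ for $z\in K$ via compactness, then transfer it to $D^*F$ through the inverse-coderivative relation of Remark \ref{remInv} and the identification $y^*=\nabla f(y)^Tx^*$. The only presentational differences are that you obtain the uniform estimate from uniform continuity of $\nabla f$ on a compact enlargement of $K$ rather than via the paper's explicit finite-subcover argument, and you carry out the coderivative computation inline instead of invoking Lemma \ref{lemContinousDifSemismooth}.
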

\begin{proof}
Since $f$ is continuously differentiable and $\nabla f$ is continuous on $K$, for each   $\varepsilon>0$ and each  $\by \in K$ there is $\delta\in (0,\alpha)$  we have that for each $y,v \in\ball_{\mathbb{R}^n}[\by,\delta]$ we have
\begin{eqnarray*}
\Vert f(y)-f(v)-\nabla f(\by)(y-v)\Vert \leq \varepsilon/2\Vert y-v\Vert \quad\text{and}\quad
	\Vert \nabla f(y)-\nabla f(\by)\Vert\leq \varepsilon/2.
\end{eqnarray*}
Fix any $\varepsilon>0$ and any $\by\in K$.
Then for each $y, v\in \ball_{\mathbb{R}^n}[\by,\delta]$ we get
\begin{eqnarray*}
&&	\Vert f(y)-f(v)-\nabla f(y)(y-v)\Vert\leq 	\Vert f(y)-f(v)-\nabla f(\by)(y-\by)\Vert\\ &+&\Vert \nabla f(y)-\nabla f(\by)\Vert \Vert y-v\Vert	\leq \varepsilon\Vert y-v\Vert.
\end{eqnarray*}
To sum up, we showed that for each $\by\in K$ there is ${\delta}\in(0,\alpha)$ such that for each $y,v \in \ball_{\mathbb{R}^n}[\by,{\delta}]$ we get
\begin{eqnarray*}
	\Vert f(y)-f(v)-\nabla f(y)(y-v)\Vert\leq  \varepsilon\Vert y-v\Vert.
\end{eqnarray*}
Then the system of sets $\lbrace \ball_{\mathbb{R}^n}(\by,{\delta}_{\by}/2): \quad\by\in K\rbrace$ is open covering of $K$. By compactness of $K$, there are $\by_1,\by_2,\dots, \by_N\in K$ such that
\begin{eqnarray*}
	\cup_{i=1}^N \ball_{\mathbb{R}^n}(\by_i,{\delta}_{\by_i}/2) \supset K.
\end{eqnarray*}
Let $\delta:=\min\lbrace {\delta}_{\by_1}, {\delta}_{\by_2}, \dots, {\delta}_{\by_N}  \rbrace/2$. Fix any $\by\in K$ and any $y\in \ball[\by, \delta]$. Then there is $i \in \lbrace 1,2, \dots, N\rbrace$  such that $\by \in \ball_{\mathbb{R}^n}[\by_i,{\delta}_{\by_i}/2]$ and so $y\in \ball_{\mathbb{R}^n}[\by_i, {\delta}_{\by_i}]$ because
\begin{eqnarray*}
	\Vert y-\by_i\Vert \leq \Vert y-\by\Vert +\Vert \by -\by_i\Vert\leq \delta+{\delta}_{\by_i}/2\leq {\delta}_{\by_i}.
\end{eqnarray*}
We showed that for each $\by\in K$ and each $y\in \ball_{\mathbb{R}^n}[\by,\delta]$ we have
\begin{eqnarray*}
	&&	\Vert f(y)-f(\by)-\nabla f(y)(y-\by)\Vert\leq \varepsilon\Vert y-\by\Vert.
\end{eqnarray*}
Fix any $\by \in K$. 
By Lemma \ref{lemContinousDifSemismooth},  we have that for each $y\in \ball_{\mathbb{R}^n}[\by,\delta]$ and  each $(y^*,x^*)\in \gph\,D^* f(y,f(y))$ we have
\begin{eqnarray*}
	\vert	\langle x^*, y - \by \rangle - \langle y^*, f(y) - f(\by) \rangle\vert\leq \varepsilon  \|( x^*, y^* )\| \|( f(y),y) - ( f(\by),\by)\|.
\end{eqnarray*}
Fix any $(x,y) \in\big(\ball_{\mathbb{R}^n}[f(\by),\delta]\times \ball_{\mathbb{R}^n}[\by,\delta]\big)\cap \gph\,F$ and any $(y^*,x^*)\in \gph\,D^* F(x,y)$. Then $x=f(y)$ and, by Remark \ref{remInv}, $(-x^*, -y^*)\in\gph\, D^*f(x,f(x))$.  We conclude that
\begin{eqnarray*}
&&	\vert \left< x^*, x-f(\by)\right>-\left< y^*, y-\by\right>\vert=\vert \left< -x^*, f(y)-f(\by)\right>-\left<- y^*, y-\by\right>\vert\\
&\leq&  \varepsilon  \|( x^*, y^* )\| \|( f(y),y) - ( f(\by),\by)\| =\varepsilon  \|( x^*, y^* )\| \|( x,y) - ( f(\by),\by)\|.
\end{eqnarray*}
\end{proof}
\begin{remark}\label{remCodevInv} Note that if $F, K,$ and $U$ are defined as in the previous proposition, then  for $(x,y)\in U\times K$ we have that $\gph\,D^*F(x, y)=\lbrace ( \nabla f(y)^T u, u):\quad u \in \mathbb{R}^n\rbrace$, when $f$ is continuously differentiable at $y$, and $\lbrace ( A^T u, u):\quad u \in \mathbb{R}^n\text{ and }A\in \partial_B f(y)\rbrace\subset\gph\,D^*F(x, y)$, when $f$ is Lipschitz continuous around $y$.
\end{remark}

A set-valued mapping  with closed convex set is uniformly subsmooth on its graph and the symmetry of limiting coderivate at some subset  of its graph implies that the mapping is uniformly semismooth$^*$ on the subset.
\begin{theorem}
	Consider a set-valued mapping $F:\mathbb{R}^n\tto\mathbb{R}^n$ such that $\gph\,F$ is convex and closed. Then $F$ is uniformly subsmooth on $\gph\,F$. Moreover, if for some $W \subset \gph\,F$ and some $\delta>0$ and each $(x,y)\in (W+\delta\ball_{\mathbb{R}^n})\cap \gph\,F$ we have that $(y^*,x^*)\in   \gph\,D^* F(x,y)$ implies that $(-y^*,-x^*)\in   \gph\,D^* F(x,y)$, then $F$ is uniformly semismooth$^*$ on $W$.
\end{theorem}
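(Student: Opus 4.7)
The plan is to extract everything from the single defining inequality of the convex normal cone: because $\gph F$ is closed and convex, the limiting normal cone $N_{\gph F}(x,y)$ coincides with the normal cone of convex analysis, and via the defining relation $(y^*,x^*)\in\gph D^*F(x,y)\Longleftrightarrow (x^*,-y^*)\in N_{\gph F}(x,y)$, for every $(x,y),(u,z)\in\gph F$ and every $(y^*,x^*)\in\gph D^*F(x,y)$ one has
\[
\langle(x^*,-y^*),(u,z)-(x,y)\rangle\le 0,
\]
i.e.\ $\langle x^*,u-x\rangle-\langle y^*,z-y\rangle\le 0$. This is exactly the one-sided inequality built into the definition of uniform subsmoothness for $F$, with the right-hand bound~$0$.

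For the first assertion, I would fix any $\varepsilon>0$ and let $\delta>0$ be arbitrary. Given $(v,w)\in\gph F$ and arbitrary $(x,y),(u,z)\in(\ball_{\mathbb{R}^n}[v,\delta]\times\ball_{\mathbb{R}^n}[w,\delta])\cap\gph F$, both points lie in $\gph F$, so the displayed inequality yields the defining estimate with the nonnegative bound $\varepsilon\|(x^*,y^*)\|\,\|(x,y)-(u,z)\|$ on the right. Hence $F$ is uniformly subsmooth on $\gph F$.

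For the second assertion I would pick a new radius $\tilde\delta\in(0,\delta]$ (possibly with a factor $1/\sqrt{2}$ to pass between max-norm boxes and the Euclidean enlargement) so that for every $(u,z)\in W$ and every $(x,y)\in(\ball_{\mathbb{R}^n}[u,\tilde\delta]\times\ball_{\mathbb{R}^n}[z,\tilde\delta])\cap\gph F$ the point $(x,y)$ belongs to $(W+\delta\ball_{\mathbb{R}^n})\cap\gph F$, where the symmetry hypothesis is at our disposal. Then, given any $(y^*,x^*)\in\gph D^*F(x,y)$, the first-assertion inequality applied to $(y^*,x^*)$ yields one sign of the bracket $\langle x^*,x-u\rangle-\langle y^*,y-z\rangle$, and applied to the symmetric element $(-y^*,-x^*)\in\gph D^*F(x,y)$ yields the opposite sign; sandwiching forces the bracket to vanish identically, so its absolute value is trivially bounded by $\varepsilon\|(x^*,y^*)\|\,\|(x,y)-(u,z)\|$, which is uniform semismoothness$^*$ on $W$.

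The only mildly technical step is the neighborhood bookkeeping in the second part, ensuring that the symmetry hypothesis is applicable at the perturbed point $(x,y)$; otherwise the whole argument is the convex-normal-cone inequality, used once for the first assertion and twice (sandwiching via coderivative symmetry) for the second, and I do not anticipate any genuine obstacle.
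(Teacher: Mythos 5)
Your proposal is correct and follows essentially the same route as the paper: use the convex-analysis characterization of the normal cone to get the one-sided inequality bounded by $0$ for uniform subsmoothness, then invoke the coderivative symmetry hypothesis to sandwich and obtain the two-sided estimate for uniform semismoothness$^*$. Your $1/\sqrt{2}$ shrinkage of the radius is an unnecessary precaution, since the paper's product norm is the max-norm, so the box $\ball_{\mathbb{R}^n}[\bx,\delta]\times\ball_{\mathbb{R}^n}[\by,\delta]$ already lies in the $\delta$-enlargement of $W$.
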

\begin{proof}
	Since $\gph F$ is convex, for each $(x, y) \in \gph F$, the coderivative definition yields
	$$
	\gph D^* F(x, y) = \{(y^*, x^*) \in \mathbb{R}^n \times \mathbb{R}^n : \langle x^*, u - x \rangle - \langle y^*, z - y \rangle \leq 0 \text{ for all } (u, z) \in \gph F\}.
	$$
	Then for any $\varepsilon > 0$, any $\delta > 0$, any $(\bx, \by) \in \gph F$,  any $(x, y), (u, z) \in (\ball_{\mathbb{R}^n}[\bx, \delta] \times \ball_{\mathbb{R}^n}[\by, \delta]) \cap \gph F$, and any $(y^*, x^*) \in \gph D^* F(x, y)$ we have
	$$
	\langle x^*, u - x \rangle - \langle y^*, z - y \rangle \leq 0 \leq \varepsilon \| (x^*, y^*) \| \| (u, z) - (x, y) \|.
	$$
	
	Further, assuming $\delta > 0$ and given a set $W$ as stated, fix any $\varepsilon > 0$, any $(\bx, \by) \in W$, any $(x, y) \in (\ball_{\mathbb{R}^n}[\bx, \delta] \times \ball_{\mathbb{R}^n}[\by, \delta]) \cap \gph F$, and any $(y^*, x^*) \in \gph D^* F(x, y)$. By the symmetry of $\gph D^* F(x, y)$, we establish
	$$
	|\langle x^*, x - \bx \rangle - \langle y^*, y - \by \rangle| \leq \varepsilon \| (x^*, y^*) \| \| (x, y) - (\bx, \by) \|,
	$$
	which concludes the proof.
\end{proof}

The uniform semismoothnness$^*$ is stable under continuously differentiable perturbation along continuous path. 

\begin{theorem}
	Let $F: \mathbb{R}^n\tto \mathbb{R}^n$ be a set-valued mapping with a closed graph and  $f:[0,T]\times \mathbb{R}^n\longrightarrow \mathbb{R}^n$ be continuously differentiable mapping for $T>0$. Consider a continuous mapping $x:[0,T]\longrightarrow \mathbb{R}^n$ such that  for each $t\in [0,T]$ we have $0\in f(t,x(t))+F(x(t))$ and a set $W:=\lbrace(x(t),-f(t, x(t))) : t \in [0,T]\rbrace.$ Suppose that $F$ is uniformly semismooth$^*$ on $W$, then for each $\varepsilon>0$ there is $\delta>0$ such that for each $t\in [0,T]$,  each $(x,y)\in\big(\ball_{\mathbb{R}^n}[x(t),\delta]\times \ball_{\mathbb{R}^n}[0,\delta]\big)\cap \gph\,(f(t,\cdot)+F)$, and  each $(y^*, x^*)\in D^* (f(t,\cdot)+F)(x,y)$ we have
	$$
\vert \left< x^*, x-x(t)\right>-\left< y^*, y\right>\vert\leq	\varepsilon\Vert (x^*, y^*)\Vert \Vert (x,y)-(x(t),0)\Vert 
	$$

\end{theorem}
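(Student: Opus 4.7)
The plan is to reduce uniform semismoothness$^*$ of the perturbed mapping $f(t,\cdot)+F$ to uniform semismoothness$^*$ of $F$ on $W$ by means of the coderivative sum rule for a continuously differentiable summand together with a first-order Taylor remainder. First I would invoke that sum rule: because $f(t,\cdot)$ is $C^1$, a pair $(y^*,x^*)$ lies in $\gph D^*(f(t,\cdot)+F)(x,y)$ if and only if $(y^*,\,x^*-\nabla_x f(t,x)^T y^*)\in\gph D^*F(x,v)$, where $v:=y-f(t,x)$. Note also that $(x,y)\in\gph(f(t,\cdot)+F)$ is equivalent to $(x,v)\in\gph F$, and $(x(t),-f(t,x(t)))\in W$ for every $t\in[0,T]$.

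Next I would extract uniform constants along the path. Compactness of $[0,T]$ and continuity of $x(\cdot)$ and $f$ make the tube $\{(t,x):t\in[0,T],\,\|x-x(t)\|\leq 1\}$ compact; hence $\nabla_x f$ is bounded on this set by some $M'$ and uniformly continuous there. The integral form of the mean value theorem then delivers, for every $\varepsilon_2>0$, a $\delta_2>0$ such that
\[
\|f(t,x)-f(t,x(t))-\nabla_x f(t,x)(x-x(t))\|\leq \varepsilon_2\|x-x(t)\|\quad\text{whenever}\ t\in[0,T],\ \|x-x(t)\|\leq\delta_2.
\]
Given $\varepsilon>0$, I would pick $\varepsilon_1,\varepsilon_2>0$ with $\varepsilon_1(1+M')^2+\varepsilon_2\leq\varepsilon$, apply uniform semismoothness$^*$ of $F$ on $W$ with tolerance $\varepsilon_1$ to get $\delta_1>0$, and then choose $\delta\in(0,\delta_2]$ with $(1+M')\delta\leq\delta_1$. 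For any $(x,y)\in(\ball_{\mathbb{R}^n}[x(t),\delta]\times\ball_{\mathbb{R}^n}[0,\delta])\cap\gph(f(t,\cdot)+F)$, the auxiliary point $(x,v)$ then sits in $(\ball_{\mathbb{R}^n}[x(t),\delta_1]\times\ball_{\mathbb{R}^n}[-f(t,x(t)),\delta_1])\cap\gph F$, since $\|v+f(t,x(t))\|\leq\|y\|+M'\|x-x(t)\|\leq(1+M')\delta\leq\delta_1$.

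The final step is the algebraic decomposition
\[
\langle x^*,x-x(t)\rangle-\langle y^*,y\rangle=\bigl[\langle x^*-\nabla_x f(t,x)^T y^*,x-x(t)\rangle-\langle y^*,v+f(t,x(t))\rangle\bigr]+\langle y^*,\,\nabla_x f(t,x)(x-x(t))-(f(t,x)-f(t,x(t)))\rangle.
\]
The first bracket is bounded by uniform semismoothness$^*$ of $F$ at $(x(t),-f(t,x(t)))\in W$, combined with $\|(x^*-\nabla_x f(t,x)^T y^*,y^*)\|\leq(1+M')\|(x^*,y^*)\|$ and $\|(x,v)-(x(t),-f(t,x(t)))\|\leq(1+M')\|(x,y)-(x(t),0)\|$; the remainder term is bounded by $\varepsilon_2\|y^*\|\|x-x(t)\|$ via the Taylor estimate. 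Summing yields the desired inequality with constant $\varepsilon_1(1+M')^2+\varepsilon_2\leq\varepsilon$.

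The main obstacle is making all bounds genuinely uniform in $t\in[0,T]$: this hinges on compactness of the tube around the path, which simultaneously controls $\nabla_x f$ along the whole trajectory and lets us use a single $\delta_1$ coming from uniform semismoothness$^*$ of $F$ on $W$. Once that uniformity is in place, the coderivative sum rule reduces the problem to $F$ and the Taylor remainder absorbs the contribution of $f$.
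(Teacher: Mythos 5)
Your proposal is correct and follows essentially the same route as the paper: both invoke the coderivative sum rule $D^*(f(t,\cdot)+F)(x,y)=\nabla_x f(t,x)^T+D^*F(x,y-f(t,x))$, obtain a uniform-in-$t$ Taylor remainder estimate for $f(t,\cdot)$ via compactness of a tube around the path, and split the expression into a term controlled by uniform semismoothness$^*$ of $F$ on $W$ plus a remainder controlled by the Taylor estimate. The only differences are cosmetic bookkeeping: the paper tracks a Lipschitz constant $\ell$ of $f$ and a bound $c$ on $\nabla_x f^T$ separately and aims for three $\varepsilon/3$ pieces, whereas you fold both roles into a single bound $M'$ on $\nabla_x f$ and arrive at the constant $\varepsilon_1(1+M')^2+\varepsilon_2$; you should just note that $\delta$ must also be taken at most the tube radius so that the mean-value bound $\|f(t,x)-f(t,x(t))\|\leq M'\|x-x(t)\|$ applies.
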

\begin{proof}
	Let $\ell\geq 0$ be a  Lipschitz constant of $f$ on $\cup_{t\in [0,T]} \ball_\mathbb{R}[t,\gamma]\times\ball_{\mathbb{R}^n}[x(t),\gamma]$ for some $\gamma>0$. Fix any $\varepsilon>0$. Since $f$ is continuously differentiable, see proof of Proposition \ref{proInverseSmooth}, there exists $\delta_1\in (0,\gamma)$ such that for each $t\in[0,T]$ and each $x \in \ball_{\mathbb{R}^n}[x(t),\delta_1]$ we have
	\begin{eqnarray*}
		\Vert f(t,x)-f(t,x(t))-\nabla_x f(t,x)(x-x(t))\Vert \leq\tfrac{ \varepsilon}{3}\Vert x-x(t)\Vert.
	\end{eqnarray*}
	Let $c:=\sup_{t\in [0,T], x\in \ball_{\mathbb{R}^n}[x(t),\delta_1]} \Vert \nabla_x f(t,x)^T\Vert$. Since $f$ is continuously differentiable, we have $c<\infty$.
 Fix any  $\widehat{\varepsilon}>0$ such  that $\widehat{\varepsilon}(1+c)\ell\leq \varepsilon/3$ and $\widehat{\varepsilon}(1+c)\leq \varepsilon/3$.  Since $F$ is uniformly semismooth$^*$ on $W$, find $\delta_2>0$  such that for each $t\in [0,T]$, each $(x,y)\in \big(\ball_{\mathbb{R}^n}[x(t),\delta_2]\times \ball_{\mathbb{R}^n}[-f(t,x(t)),\delta_2] \big)\cap \gph\,F$ and each $(y^*,x^*)\in \gph\,D^* F(x,y)$ we have
	\begin{eqnarray*}
		\vert \left< x^*,x-x(t)\right>-\left< y^*,y+f(t,x(t)) \right>\vert\leq \widetilde{\varepsilon} \Vert (x^*, y^*)\Vert \Vert(x,y)-(x(t),-f(t,x(t)))\Vert.  
	\end{eqnarray*}
	
	Let $\delta:=\min \lbrace \delta_1, \delta_2\rbrace/(1+\ell)$. Fix any $t\in [0,T]$, any $(x,y)\in\big(\ball_{\mathbb{R}^n}[x(t),\delta]\times \ball_{\mathbb{R}^n}[0,\delta]\big)\cap \gph\,(f(t,\cdot)+F)$, and any $(y^*, x^*)\in D^* (f(t,\cdot)+F)(x,y)$. 	Since $f$ is continuously differentiable, by \cite[10.43 Exercise]{RW1998}, we have
	\begin{eqnarray*}
		D^* (f(t,\cdot)+F)(x,y)= \nabla_x f(t, x)^T+ D^* F(x,y-f(t,x)).
	\end{eqnarray*}
	Then $y-f(t,x)\in \ball_{\mathbb{R}^n}[-f(t,x(t)),\delta_2]$, since $\Vert y-f(t,x)+f(t,x(t))\Vert\leq \delta+\delta\ell \leq \delta_2$. To sum up, we get
	\begin{eqnarray*}
		&&	\vert \left< x^*,x-x(t)\right>-\left< y^*,y\right>\vert=	\vert \left< x^*-\nabla_x f(t,x)^Ty^*+\nabla_x f(t,x)^Ty^*,x-x(t)\right>-\left< y^*,y\right>\vert\\
		&=& \vert \left< x^*-\nabla_x f(t,x)^Ty^*,x-x(t)\right>-\left< y^*,y-\nabla_x f(t,x)(x-x(t))\right>\vert\\
		&\leq&\vert \left< x^*-\nabla_x f(t,x)^Ty^*,x-x(t)\right>-\left< y^*,y- f(t,x)+f(t,x(t))\right>\vert\\
		&+&\vert\left< y^*,f(t,x)-f(t,
		x(t))-\nabla_x f(t,x)(x-x(t))\right>\vert\\
		&\leq &\widetilde{\varepsilon} \Vert (x^*-\nabla_x f(t,x)^Ty^*, y^*)\Vert \Vert(x,y-f(t,x))-(x(t),-f(t,x(t)))\Vert +\varepsilon/3 \Vert y^*\Vert \Vert x-x(t)\Vert\\
		&\leq &\widetilde{\varepsilon}( \Vert (x^*, y^*)\Vert+c\Vert y^*\Vert ) \Vert(x,y-f(t,x))-(x(t),-f(t,x(t)))\Vert +\varepsilon/3 \Vert y^*\Vert \Vert x-x(t)\Vert\\
		&\leq &\widetilde{\varepsilon}(1+c) \Vert (x^*, y^*)\Vert \left(\Vert(x,y)-(x(t),0)\Vert+\ell\Vert x-x(t)\Vert\right) +\varepsilon/3 \Vert y^*\Vert \Vert x-x(t)\Vert\\
		&\leq &\varepsilon/3 \Vert (x^*, y^*)\Vert \Vert(x,y)-(x(t),0))\Vert+\varepsilon/3 \Vert  y^*\Vert \Vert x-x(t)\Vert +\varepsilon/3 \Vert y^*\Vert \Vert x-x(t)\Vert\\
		&\leq &\varepsilon \Vert (x^*, y^*)\Vert \Vert(x,y)-(x(t),0)\Vert.
	\end{eqnarray*}
\end{proof}

\section{Uniform strong subregularity}

In this section, we are  investigating uniform strong metric subregularity on compact subsets of Banach spaces of mappings which are defined as a sum of a single-valued (possibly nonsmooth) mapping and a set-valued mapping. We are following ideas of the proofs from \cite[Section 2]{CPR2019}.

Further, we present a statement concerning perturbed strong metric subregularity on a set.

\begin{theorem}
	\label{thmStabilitySubregularity}
	Let $(X,\Vert \cdot\Vert)$ and $(Y,\Vert \cdot\Vert)$ be Banach spaces. Consider a set-valued mappings $F:X\tto Y$ and a point $(\bx, \by)\in \gph\,F$. Assume that there are positive constants $a,b,$ and $\kappa$ such that  for each $(x,y)\in \big(\ball_X[\bx, a]\times \ball_Y[\by, b])\cap \gph\,F$ there is $r>0$ such that for each $u\in \ball_X[x, r]$ we have
	\begin{eqnarray}
		\label{eqConstrainedSubreg}
		\Vert u-x\Vert \leq \kappa \dist(y,F(u)\cap \ball_Y[\by, b]).
	\end{eqnarray}
	Let $\mu>0$ be such that $\kappa \mu <1$ and let $\kappa'>\kappa/(1-\kappa \mu)$. Then for every positive $\alpha$ and $\beta$ such that
$		2\alpha\leq a$ and $	2\beta+\mu \alpha \leq b$	and for every mapping $g:X\longrightarrow Y$ satisfying
	\begin{eqnarray*}
	\Vert g(\bx)\Vert\leq\beta\quad\text{and}\quad	\Vert g(x)-g(u)\Vert \leq \mu\Vert x-u\Vert\quad\text{for each}\quad x,u \in \ball_X[\bx,\alpha],
	\end{eqnarray*}
	 we have that for each $y\in \ball_Y[\by,\beta]$ and each $x\in (g+F)^{-1}(y)\cap \ball_X[\bx, \alpha]$  there is $r\in (0,\alpha]$  such that each $u\in\ball_X[x,r]$ and   each   $ v\in  (g+F)(u)\cap \ball_Y[\by, \beta]$   we have
	 \begin{eqnarray*}
	 	\Vert u-x\Vert\leq \kappa' \Vert y-v\Vert.
	 \end{eqnarray*}
	\end{theorem}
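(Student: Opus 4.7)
The plan is to reduce the perturbed estimate for $g+F$ to the hypothesis on $F$ alone by a change of right-hand parameter. Fix $y \in \ball_Y[\bar y,\beta]$ and $x \in (g+F)^{-1}(y)\cap\ball_X[\bar x,\alpha]$, and set $\tilde y := y - g(x)$. Since $y \in (g+F)(x)$ one has $\tilde y \in F(x)$; combining $\|g(\bar x)\| \leq \beta$, the Lipschitz bound $\|g(x) - g(\bar x)\| \leq \mu\alpha$, and the size conditions $2\alpha \leq a$ and $2\beta + \mu\alpha \leq b$, a triangle inequality shows $\|\tilde y - \bar y\| \leq \beta + \mu\alpha + \beta \leq b$, so $(x,\tilde y) \in (\ball_X[\bar x,a]\times \ball_Y[\bar y,b])\cap \gph F$. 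The hypothesis then supplies some $r_0 > 0$ with
\[
\|u - x\| \leq \kappa\,\dist\bigl(\tilde y,\, F(u) \cap \ball_Y[\bar y, b]\bigr) \quad \text{for every } u \in \ball_X[x, r_0].
\]

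I would then set $r := \min\{r_0,\, \alpha - \|x - \bar x\|\}$, so that $\ball_X[x,r] \subset \ball_X[\bar x,\alpha]$ and the Lipschitz bound on $g$ is active at every admissible $u$. Pick $u \in \ball_X[x,r]$ and $v \in (g+F)(u)\cap\ball_Y[\bar y,\beta]$; then $v - g(u) \in F(u)$, and the same triangle-inequality computation (applied with $u,v$ in place of $x,y$) yields $\|v - g(u) - \bar y\| \leq 2\beta + \mu\alpha \leq b$, so $v - g(u)$ is an admissible competitor for the distance in the display above.

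Inserting $v - g(u)$ into that estimate and using $\|g(u) - g(x)\| \leq \mu\|u - x\|$ gives
\[
\|u - x\| \;\leq\; \kappa\,\|\tilde y - (v - g(u))\| \;=\; \kappa\,\|(y - v) + (g(u) - g(x))\| \;\leq\; \kappa\,\|y - v\| + \kappa\mu\,\|u - x\|,
\]
and since $\kappa\mu < 1$ one solves to obtain $\|u - x\| \leq \tfrac{\kappa}{1-\kappa\mu}\|y - v\| \leq \kappa'\|y-v\|$, as desired. The only delicate step is the choice of $r$: the Lipschitz property of $g$ is assumed only on $\ball_X[\bar x,\alpha]$, so guaranteeing $\ball_X[x,r] \subset \ball_X[\bar x,\alpha]$ forces $r \leq \alpha - \|x-\bar x\|$, which is automatic for $x$ in the interior of $\ball_X[\bar x,\alpha]$; the remaining verifications are triangle inequalities and the scalar rearrangement enabled by the condition $\kappa\mu < 1$.
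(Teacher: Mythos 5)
Your argument is essentially the same as the paper's: both shift the right-hand side by $g(x)$ to land inside $\gph F$, invoke the constrained subregularity estimate at $(x,\,y-g(x))$, insert the competitor $v-g(u)\in F(u)\cap\ball_Y[\by,b]$, and then absorb the $\kappa\mu\|u-x\|$ term using $\kappa\mu<1$. The triangle-inequality bookkeeping ($\|y-g(x)-\by\|\le 2\beta+\mu\alpha\le b$, and likewise for $v-g(u)$) also matches.

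The one point where you deviate is the choice of $r$, and there you have correctly identified a real delicacy but not resolved it. You set $r:=\min\{r_0,\,\alpha-\|x-\bx\|\}$ so that $\ball_X[x,r]\subset\ball_X[\bx,\alpha]$, which is exactly what is needed to justify $\|g(u)-g(x)\|\le\mu\|u-x\|$ from the stated hypothesis. But the conclusion requires some $r>0$, and your $r$ vanishes when $x$ lies on the sphere $\|x-\bx\|=\alpha$; waving this away with ``automatic for $x$ in the interior'' leaves the boundary case open, so as written the proof is incomplete. For what it is worth, the paper's own proof takes the other horn of this dilemma: it simply keeps $r\in(0,\alpha]$, which forces $u\in\ball_X[\bx,2\alpha]$, and then uses $\|g(u)-g(x)\|\le\mu\|u-x\|$ and $\|g(u)\|\le\mu\alpha+\beta$ for such $u$ — estimates that are only licensed if $g$ is Lipschitz on $\ball_X[\bx,2\alpha]$ rather than on $\ball_X[\bx,\alpha]$ as literally stated (and indeed, when Theorem~4.3 later invokes this result, the Lipschitz bound for $g$ is verified on the larger ball $\ball_X[\bx,2\alpha]$). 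So the cleanest repair to your argument is to read the hypothesis as Lipschitz continuity of $g$ on $\ball_X[\bx,2\alpha]$ — under that reading you may take $r:=\min\{r_0,\alpha\}$, the boundary case disappears, and the rest of your computation goes through unchanged.
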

	\begin{proof}
		Fix any $\alpha>0$ and $\beta>0$ and any mapping $g$ as in the conclusion. Then fix any $y\in \ball_Y[\by,\beta]$. Fix any $x\in (g+F)^{-1}(y)\cap \ball_X[\bx, \alpha]$ and find a corresponding $r\in (0, \alpha]$ such that \eqref{eqConstrainedSubreg}, for each $u\in \ball_X[x, r]$, holds.
Fix any $u\in \ball_X[x,r]$.
Then $u\in \ball_X[\bx, a]$ since
\begin{eqnarray*}
	\Vert u-\bx\Vert \leq\Vert u-x\Vert+\Vert x-\bx\Vert\leq r+{\alpha}\leq 2\alpha\leq a.
\end{eqnarray*}
Note that $ \ball_Y[\by,\beta]\subset \ball_{Y}[\by+g(x), b]$, since
for each $v \in  \ball_Y[\by,\beta]$ we have
\begin{eqnarray*}
	\Vert\by+ g(x)- v\Vert\leq 	\Vert\by-  v\Vert +\Vert g(x)-g(\bx)\Vert +\Vert g(\bx)\Vert\leq 	\beta+\mu r +\beta \leq 	2\beta+\mu \alpha  \leq b;
\end{eqnarray*}
so is $y-g(x)\in \ball_{Y}[\by, b]$. If $(g(u)+F(u))\cap \ball_Y[\by, \beta] =\emptyset$, we are done. If not, fix any $v\in (g(u)+F(u))\cap \ball_Y[\by, \beta]$. Then
\begin{eqnarray*}
	\Vert u-x\Vert&\leq& \kappa \dist(y-g(x),F(u)\cap \ball_Y[\by, b])\leq \kappa \dist(y-g(u),F(u)\cap \ball_Y[\by, b])+\kappa\Vert g(u)-g(x)\Vert\\
	&\leq &  \kappa \dist(y,(g(u)+F(u))\cap \ball_Y[\by+g(u), b])+ \kappa\mu \Vert u -x\Vert\\&\leq&  \kappa \dist(y,(g(u)+F(u))\cap \ball_Y[\by, \beta])+ \kappa\mu  \Vert u-x\Vert\\
	&\leq&  \kappa \Vert y -v \Vert+ \kappa\mu \Vert u-x\Vert.
\end{eqnarray*}
Taking into account that $\kappa\mu<1$ and $\tfrac{\kappa}{1-\kappa\mu}<\kappa'$, we obtain
\begin{eqnarray*}
	\Vert u-x\Vert\leq \tfrac{\kappa}{1-\kappa\mu} \Vert y -v\Vert\leq  \kappa' \Vert y-v\Vert.
\end{eqnarray*}
	\end{proof}
	
Following proposition using ideas from	\cite{CDK2018} and gives us stability under set-valued perturbation of strong metric subregularity at the reference point.
	\begin{proposition}
		\label{propStability}
		Let $(X,\Vert \cdot\Vert)$ and $(Y,\Vert \cdot\Vert)$ be Banach spaces. Consider a set-valued mappings $F:X\tto Y$ and a point $(\bx, \by)\in \gph\,F$. Assume that there are $\kappa>0$ and  $\alpha>0$ such that $F$ is strongly metrically subregular at $(\bx,\by)$ with the constant $\kappa$ and  the \nn $\ball_{X}[\bx,\alpha]$. Let $\mu>0$ be such that $\kappa \mu <1$ and let $\kappa'>\kappa/(1-\kappa \mu)$.
		
		Then for each $\beta\in (0,\alpha]$	and for each set-valued mapping $G:X\tto Y$ satisfying
		\begin{eqnarray*}
			G(\bx)=\lbrace \bz \rbrace\quad \text{and}\quad	G(x)\subset \lbrace\bz\rbrace+\mu \ball_{Y} \Vert x-\bx \Vert\quad\text{for each}\quad x \in \ball_X[\bx,\beta],
		\end{eqnarray*}
	we have that the mapping $G+F$ is strongly metrically subregular at $(\bx,\by+\bz)$ with the constant $\kappa'$ and  the \nn $\ball_{X}[\bx,\beta]$. 
	\end{proposition}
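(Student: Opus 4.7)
The plan is to verify, under the given hypotheses, the two parts of the definition of strong metric subregularity for $G+F$ at $(\bx,\by+\bz)$ on the neighborhood $\ball_X[\bx,\beta]$: first the distance estimate with constant $\kappa'$, and then note that uniqueness of $\bx$ as a preimage of $\by+\bz$ in $\ball_X[\bx,\beta]$ is an immediate consequence of that estimate (plug in the zero residual). Fix $\beta\in(0,\alpha]$ and a set-valued mapping $G$ satisfying the stated perturbation condition.

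Next, fix an arbitrary $x\in \ball_X[\bx,\beta]$. If $(G+F)(x)$ is empty then the distance estimate holds trivially, so assume it is nonempty and fix any $v\in (G+F)(x)$. By the very definition of the sum $G+F$, one can decompose $v=w+y$ with $w\in G(x)$ and $y\in F(x)$. The plan is to bound $\|x-\bx\|$ using the subregularity of $F$ applied to the pair $(x,y)$, and then to absorb the resulting $G$-term by the perturbation estimate on $G$.

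Concretely, since $x\in \ball_X[\bx,\beta]\subset \ball_X[\bx,\alpha]$, strong metric subregularity of $F$ at $(\bx,\by)$ gives
\begin{eqnarray*}
\|x-\bx\| \;\leq\; \kappa\,\dist(\by, F(x)) \;\leq\; \kappa\,\|\by-y\|.
\end{eqnarray*}
Writing $y=v-w$ and inserting $\bz$ yields
\begin{eqnarray*}
\|\by-y\| \;=\; \|(\by+\bz)-v+(w-\bz)\| \;\leq\; \|v-(\by+\bz)\|+\|w-\bz\|,
\end{eqnarray*}
and the perturbation hypothesis $G(x)\subset\{\bz\}+\mu\ball_Y\|x-\bx\|$ gives $\|w-\bz\|\leq \mu\|x-\bx\|$. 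Combining these and rearranging using $\kappa\mu<1$, we obtain
\begin{eqnarray*}
\|x-\bx\| \;\leq\; \frac{\kappa}{1-\kappa\mu}\,\|v-(\by+\bz)\| \;\leq\; \kappa'\,\|v-(\by+\bz)\|.
\end{eqnarray*}

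Finally, taking the infimum over $v\in(G+F)(x)$ produces the desired estimate
\begin{eqnarray*}
\|x-\bx\| \;\leq\; \kappa'\,\dist(\by+\bz,(G+F)(x)) \quad \text{for each } x\in\ball_X[\bx,\beta],
\end{eqnarray*}
which is metric subregularity of $G+F$ at $(\bx,\by+\bz)$ with constant $\kappa'$ on $\ball_X[\bx,\beta]$. To conclude strong metric subregularity, observe that any $x\in(G+F)^{-1}(\by+\bz)\cap\ball_X[\bx,\beta]$ makes the right-hand side above equal to zero, forcing $x=\bx$; thus the singleton property holds as well. The step that requires the most care is the decomposition and careful use of the perturbation bound $\|w-\bz\|\leq\mu\|x-\bx\|$ rather than a Lipschitz-type inequality, but the final absorption is formally identical to the single-valued case treated in Theorem \ref{thmStabilitySubregularity}, so no genuine obstacle is expected.
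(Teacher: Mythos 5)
Your proof is correct and follows essentially the same route as the paper's: decompose a generic element of $(G+F)(x)$, apply the subregularity estimate for $F$, use the perturbation bound $\|w-\bz\|\le\mu\|x-\bx\|$ to produce the term $\kappa\mu\|x-\bx\|$, and absorb it via $\kappa\mu<1$. The only (minor, welcome) difference is that you explicitly verify the singleton condition $(G+F)^{-1}(\by+\bz)\cap\ball_X[\bx,\beta]=\{\bx\}$ at the end, a step the paper's proof leaves implicit.
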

	\begin{proof}
			Fix any $\mu>0$, $\beta>0$, and any mapping $G$ as in the conclusion.  Fix any  $x\in \ball_X[\bx, \gamma]$.
			
			If $(G(x)+F(x)) =\emptyset$, we are done.  If not, fix any $z\in G(x)$, then
			$$
			\Vert z-\bz\Vert \leq \mu \Vert x-\bx\Vert.
			$$
Then
			\begin{eqnarray*}
				\Vert x-\bx\Vert&\leq& \kappa \dist(\by,F(x))\leq \kappa \dist(\by+\bz,z+F(x))+\kappa\Vert z-\bz\Vert\\
				& \leq &\kappa \dist(\by+\bz,z+F(x))+\kappa\mu \Vert x-\bx\Vert.
			\end{eqnarray*}
			Taking into account that $\kappa\mu<1$ and $\tfrac{\kappa}{1-\kappa\mu}<\kappa'$ and that $z$ is fixed arbitrary, we obtain
			\begin{eqnarray*}
			\Vert x-\bx\Vert&\leq&\kappa/(1-\kappa \mu) \dist(\by+\bz,G(x)+F(x))\leq\kappa' \dist(\by+\bz,G(x)+F(x)).
		\end{eqnarray*}
		
	\end{proof}
We will now demonstrate that subregularity around each point of a compact set implies uniform subregularity. In other words, it is possible to find the same constant and \nn for all points within this set.

	\begin{theorem}
		\label{thmParametricStabilitySubregularity}
		Let $P$ be a  metric space and let $(X, \Vert \cdot\Vert)$ and $(Y,\Vert \cdot \Vert)$ be Banach spaces and let $\Omega \subset P\times X$ be a compact set. Consider a set-valued mapping $F:X\tto Y$ and a continuous single-valued mapping $f: P\times X\longrightarrow Y$ such that for each $({t},\bx)\in \Omega$ we have:
		\begin{enumerate}
			\item[\rm (i)]  the mapping $X\ni x \longmapsto G_{{t}}(x):= f({t},x)+F(x)$  is strongly metrically subregular around $(\bx,0)$;
			\item[\rm (ii)]  for each $\mu>0$ there is $\alpha>0$ such that for each $x, u \in \ball_X[\bx,\alpha]$ and each $s \in\ball_P[{t},\alpha]$ we have
			\begin{eqnarray*}
				\Vert f(s,u)-f({t},u)-(f(s,x)-f({t},x))\Vert \leq \mu \Vert x-u\Vert.
			\end{eqnarray*}
		\end{enumerate}
		Then:
		\begin{enumerate}
					 \item[\rm (iii)] there are positive constants $a,  b$, and $\kappa$ such that for each $({t},\bx)\in \Omega$ the mapping $G_{{t}}$ is strongly metrically subregular around $(\bx,0)$ with the constant $\kappa$ and \nns\,$\ball_X[\bx, a]$ and $\ball_Y[0, b]$;
					 \item[\rm (iv)] there are $\kappa'>0$ and $c>0$ such that for each $(t,x)\in \Omega$ the mapping $G_t$ is strongly metrically subregular \textbf{at} $(x,0)$ with the constant $\kappa'$ and the \nn $\ball_{X}[x,c]$.
		\end{enumerate}
	\end{theorem}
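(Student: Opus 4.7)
The plan is to fix an arbitrary $(t_0, \bar{x}_0) \in \Omega$ and apply Theorem~\ref{thmStabilitySubregularity} to $F := G_{t_0}$ at the base point $(\bar{x}_0, 0)$ using the single-valued perturbation $g_s(x) := f(s, x) - f(t_0, x)$ indexed by $s$ in a small neighborhood of $t_0$. Since $G_{t_0} + g_s = G_s$, this transfers the strong metric subregularity of $G_{t_0}$ around $(\bar{x}_0, 0)$, granted by (i), to a uniform constrained estimate for $G_s$ on a product window of the base point; the compactness of $\Omega$ will then assemble the local uniform estimates into global constants for both conclusions.

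To verify the hypothesis of Theorem~\ref{thmStabilitySubregularity}, the first step is to extract from (i) uniform constants $a_0, b_0, \kappa_0 > 0$ so that \eqref{eqConstrainedSubreg} holds for $G_{t_0}$ on $\ball_X[\bar{x}_0, a_0] \times \ball_Y[0, b_0]$. This localization from the pointwise formulation of Definition~\ref{defRegularity}(iii) to a single controlling $\kappa_0$ is the main technical point; it relies on shrinking the neighborhood given in (i) and on the upper semicontinuity of the strong subregularity modulus at the reference point. I then fix $\mu_0 \in (0, 1/\kappa_0)$ and $\kappa' > \kappa_0/(1 - \kappa_0 \mu_0)$. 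Applying (ii) with $\mu = \mu_0$ and invoking the continuity of $f$ at $(t_0, \bar{x}_0)$ produces $\gamma_0 > 0$ and a prescribed small $\beta > 0$ such that, for every $s \in \ball_P[t_0, \gamma_0]$, the perturbation $g_s$ is $\mu_0$-Lipschitz on $\ball_X[\bar{x}_0, \gamma_0]$ and $\|g_s(\bar{x}_0)\| = \|f(s, \bar{x}_0) - f(t_0, \bar{x}_0)\| \le \beta$.

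Choosing $\alpha \in (0, \gamma_0]$ with $2\alpha \le a_0$ and $\beta$ also satisfying $2\beta + \mu_0 \alpha \le b_0$, Theorem~\ref{thmStabilitySubregularity} yields, for every $s \in \ball_P[t_0, \gamma_0]$, every $y \in \ball_Y[0, \beta]$, and every $x \in G_s^{-1}(y) \cap \ball_X[\bar{x}_0, \alpha]$, some $r \in (0, \alpha]$ such that $\|u - x\| \le \kappa' \|y - v\|$ whenever $u \in \ball_X[x, r]$ and $v \in G_s(u) \cap \ball_Y[0, \beta]$. For any $(s, \bar{x}) \in \Omega$ with $s \in \ball_P[t_0, \gamma_0]$ and $\bar{x} \in \ball_X[\bar{x}_0, \alpha/2]$, the inclusion $\ball_X[\bar{x}, \alpha/2] \subset \ball_X[\bar{x}_0, \alpha]$ shows that $G_s$ is strongly metrically subregular around $(\bar{x}, 0)$ with the uniform constants $(\alpha/2, \beta, \kappa')$. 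The open sets $V_{t_0, \bar{x}_0} := \ball_P(t_0, \gamma_0) \times \ball_X(\bar{x}_0, \alpha/2)$ form an open cover of the compact set $\Omega$; from a finite subcover, setting $a$, $b$ as the minima and $\kappa$ as the maximum of the corresponding constants produces (iii).

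For (iv), I take $\kappa' := \kappa$ and $c := \min\{a, \kappa b\}$ (shrinking $c$ if needed to accommodate a uniform lower bound on the per-point radii $r$ extracted across the finite subcover). Given $(t, x) \in \Omega$ and $u \in \ball_X[x, c]$, a case split completes the argument: if $\dist(0, G_t(u)) \ge b$ then $\|u - x\| \le c \le \kappa b \le \kappa \dist(0, G_t(u))$ trivially; otherwise a minimizing sequence $v_n \in G_t(u) \cap \ball_Y[0, b]$ inserted into the constrained estimate from (iii) gives $\|u - x\| \le \kappa \|v_n\| \to \kappa \dist(0, G_t(u))$, and the uniqueness $G_t^{-1}(0) \cap \ball_X[x, c] = \{x\}$ is then automatic from the estimate. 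The principal obstacles of the whole proof are the uniform-$\kappa_0$ localization in paragraph two and the uniform control of the per-point radius $r$ needed in (iv); both are handled via the regularity properties of the subregularity modulus at the reference point.
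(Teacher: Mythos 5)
Your argument for part (iii) essentially follows the paper's route: fix a base point $(t_0,\bar x_0)\in\Omega$, use (ii) and continuity of $f$ to build the Lipschitz perturbation $g_s=f(s,\cdot)-f(t_0,\cdot)$, apply Theorem~\ref{thmStabilitySubregularity}, and close with a finite subcover. One detail you skip (and the paper handles explicitly with a case split on $\Vert v\Vert\lessgtr\beta$) is the passage from the \emph{constrained} distance $\dist(y,G_s(u)\cap\ball_Y[0,\beta])$ supplied by Theorem~\ref{thmStabilitySubregularity} to the \emph{unconstrained} distance $\dist(y,G_s(u))$ that appears in the conclusion; without that step the reduction is incomplete, though the idea is standard and recoverable.

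The treatment of part (iv), however, has a genuine gap. You try to deduce (iv) directly from (iii), but (iii) is structurally incapable of yielding (iv): in (iii) the constant $\kappa$ and the product window $\ball_X[\bx,a]\times\ball_Y[0,b]$ are uniform, yet the \emph{per-point} radius $r$ at each graph point $(x,y)$ inside that window is not. In particular, at the reference points $(x,0)$ with $(t,x)\in\Omega$, (iii) gives strong metric subregularity at $(x,0)$ with constant $\kappa$ and \emph{some} radius $r_{t,x}>0$, but nothing forces $\inf_{(t,x)\in\Omega} r_{t,x}>0$. Your parenthetical remark about ``shrinking $c$ if needed to accommodate a uniform lower bound on the per-point radii extracted across the finite subcover'' does not help: within a single member of the finite subcover, the per-point radii still vary with the graph point, not merely with the subcover index, so taking a minimum over finitely many pieces does not bound them below. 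The case split you propose (on whether $\dist(0,G_t(u))\geq b$) is fine as far as it goes, but in the remaining case you still need the estimate at $(x,0)$ to hold on all of $\ball_X[x,c]$, which is exactly the uniform-radius claim you have not established. The paper avoids this entirely by proving (iv) \emph{separately} from (iii) via Proposition~\ref{propStability}: that proposition is precisely the tool that \emph{preserves the neighborhood} $\ball_X[\bx,\beta]$ under the single-valued perturbation $g_s$, which is what delivers the uniform radius $c$. Your appeal to ``regularity properties of the subregularity modulus at the reference point'' is a promissory note; the concrete mechanism is Proposition~\ref{propStability}, and without invoking it (or reproving it) the argument for (iv) does not close.
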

	\begin{proof} We are showing only {\rm (iii)}. The proof of (iv) follows similarly from Proposition \ref{propStability}, see also \cite[Theorem 2.6]{CPR2019}.
Fix any $({t},\bx) \in \Omega$. Find positive $a, b$, and $\kappa$, such that for each $(x,y)\in \big(\ball_{X}[\bx,a]\times \ball_{Y}[0, b]\big)\cap\gph\,G_{{t}}$ there is $r>0$ such that for each $u\in \ball_X[x,r]$ we have
\begin{eqnarray*}
	\Vert u-x\Vert\leq \kappa \dist(y,G_{\bar{t}}(u)). 
\end{eqnarray*}
Let $\mu:=1/(2\kappa)$ and $\kappa':= 3\kappa$. Then $\kappa \mu <1$ and $\kappa'>2\kappa=\kappa/(1-\kappa \mu)$. Find $\alpha\in \left(0,  b/(2\mu)\right)$ such that each $x, u \in \ball[\bx,2\alpha]$ and each $s \in\ball_P[{t},\alpha]$ we have
\begin{eqnarray*}
	\Vert f(s,u)-f({t},u)-(f(s,x)-f({t},x))\Vert \leq \mu \Vert x-u\Vert.
\end{eqnarray*}
Let $\beta:=b/4$. Then $2\beta +\mu \alpha<b/2+b/2=b.$ Since $f$ is continuous, there is $r'\in (0,\alpha/2]$ such that
\begin{eqnarray*}
	\Vert f(s,\bx)-f({t},\bx)\Vert\leq \beta\quad \text{for each}\quad s \in\ball_P[{t},r'].
\end{eqnarray*}
 Fix any $(s, x) \in\big(\ball_P[{t}, r']\times \ball_{X}[\bx, r']\big)\cap \Omega$.  Define a mapping $g: X\longrightarrow Y$ such that
\begin{eqnarray*}
	g(u):= f(s,u)-f({t},u)\quad\text{for}\quad u\in X.
\end{eqnarray*}
Then $G_{s}=g+G_{{t}}$ and for  each $x, u \in \ball[\bx,2\alpha]$ we have
	\begin{eqnarray*}
		\Vert g(x)-g(u)\Vert \leq \mu \Vert x-u\Vert\quad\text{and}\quad \Vert g(\bx)\Vert \leq \beta.
	\end{eqnarray*}
Theorem \ref{thmStabilitySubregularity}, with $G:=G_{{t}}$ and $\by:=0$, implies that for each $y\in \ball_Y[0,\beta]$ and each $x\in (g+G_{{t}})^{-1}(y)\cap \ball_X[\bx, \alpha]$  there is $r\in (0,\alpha]$ such that for each $u\in\ball_X[x,r]$ and  each   $ v\in  (g+G_{{t}})(u)\cap \ball_X[0, \beta]$   we have
\begin{eqnarray*}
	\Vert u-x\Vert\leq \kappa' \Vert y-v\Vert.
\end{eqnarray*}
We are showing that for each $(x,y) \in\big(\ball_{X}[\bx,\alpha/3]\times\ball_{Y}[0,\beta/3]\big)\cap \gph\,G_s$ there is $r>0$ such that for each $u\in \ball_{X}[x, r]$ we have
\begin{eqnarray*}
	\Vert u-x\Vert\leq \kappa'\dist(y,G_s(u)).
\end{eqnarray*}
Fix any such $(x, y)$ and find a corresponding $r\in (0,2\kappa'\beta/3]$ as in the claim and fix any $u\in \ball_{X}[x, r]$.  Thus $x\in (g+G_t)^{-1}(y)\cap \ball_{X}[\bx,\alpha]$. Fix any $v\in G_{s}(u)$. If $\Vert v\Vert\leq \beta$, using the claim, we get $\Vert u-x\Vert\leq \kappa' \Vert y-v\Vert$.

 If $\Vert v\Vert> \beta$, then $\Vert y-v\Vert\geq \Vert v\Vert-\Vert y\Vert> \beta-\beta/3=2/3\beta$  and so
 \begin{eqnarray*}
 	\Vert u - x \Vert\leq r\leq 2\kappa'\beta/3< \kappa'\Vert y-v\Vert.
 \end{eqnarray*}
 To sum up, we show that for each $({t},\bx)\in \Omega$ there are constants $\kappa'>0$, $\alpha>0,$ $\beta>0$, and $r'\in (0,\alpha/2)$ such that for each $(s, u) \in\big(\ball_P[{t}, r']\times \ball_{X}[\bx, r']\big)\cap \Omega$  and each $(x,y) \in\big(\ball_{X}[u,\alpha]\times\ball_{Y}[0,\beta]\big)\cap \gph\,G_{{s}}$ there is $r>0$ such that for each $v\in \ball_{X}[x, r]$ we have
 \begin{eqnarray*}
 \Vert v-x\Vert \leq \kappa'(y,G_s(v)).
 \end{eqnarray*}
 Note that  $\ball_{X}[u,r']\subset\ball_{X}[\bx,\alpha]$, then $G_s$ is strongly metrically subregular around $(x,0)$  with the constant $\kappa'$ and \nns $\ball_{X}[x,\alpha]$ and $\ball_{Y}[0, \beta]$.
 
 So $\kappa'$, $\alpha,$ $\beta$, and $r'$ depends only on the choice $(\bar{t},\bx)\in \Omega$. Then from open covering $\cup_{z=(t,x)\in\Omega} \big(\ball_P(t,r_z')\times\ball_{X}(x, r_z'))$ of compact set $\Omega$ find a finite subcovering $\mathcal{O}_i:= \big(\ball_P(t_i,r_{i}')\times\ball_{X}(x_i, r_{i}'))$ for $i=1,2,3, \dots, N$.
 Let $a:=\min\lbrace \alpha_{i}: i=1,2,3, \dots, N\rbrace $, $b:=\min\lbrace \beta_i: i=1,2,3, \dots, N\rbrace $, and $\kappa:=\max\lbrace \kappa'_i: i=1,2,3, \dots, N\rbrace.$ For any $(t,x)\in \Omega$ there is an index $i\in \lbrace 1,2,3,\dots, N\rbrace$ such that $(t,x)\in\mathcal{O}_i$. Hence the mapping $G_t$ is strongly metrically subregular around $(x,0)$ with the constant $\kappa$ and \nns $\ball_{X}[x,a]$ and $\ball_{Y}[0, b]$.
\end{proof}

Let us note, when $f$ is continuously differentiable, then the condition {\rm (ii)} is satisfied.

Similarly to the previous result, strong metric subregularity at each point of a compact set implies uniform strong metric subregularity on the entire set

\begin{theorem}
\label{thmParametricStabilitySubregularityAt}
Let $P$ be a  metric space and let $(X, \Vert \cdot\Vert)$ and $(Y,\Vert \cdot \Vert)$ be Banach spaces and let $\Omega \subset P\times X$ be a compact set. Consider a set-valued mapping $F:X\tto Y$ and a continuous single-valued mapping $f: P\times X\longrightarrow Y$ such that for each $({t},\bx)\in \Omega$ we have:
\begin{enumerate}
	\item[\rm (i)]  the mapping $X\ni x \longmapsto G_{{t}}(x):= f({t},x)+F(x)$  is strongly metrically subregular \textbf{at} $(\bx,0)$;
	\item[\rm (ii)]  for each each $\mu>0$ there is $\alpha>0$ such that for each $x, u \in \ball_X[\bx,\alpha]$ and each $s \in\ball_P[{t},\alpha]$ we have
	\begin{eqnarray*}
		\Vert f(s,u)-f({t},u)-(f(s,x)-f({t},x))\Vert \leq \mu \Vert x-u\Vert.
	\end{eqnarray*}
\end{enumerate}
Then there are $\kappa'>0$ and $c>0$ such that for each $(t,x)\in \Omega$ the mapping $G_t$ is strongly metrically subregular \textbf{at} $(x,0)$ with the constant $\kappa'$ and the \nn $\ball_{X}[x,c]$.
\end{theorem}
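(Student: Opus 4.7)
The plan is to mirror the proof of Theorem \ref{thmParametricStabilitySubregularity}(iii), substituting Proposition \ref{propStability} for Theorem \ref{thmStabilitySubregularity}, and gluing the local analysis via a finite subcover of the compact $\Omega$. I will fix an arbitrary $(\bar t,\bar x)\in\Omega$. By (i), $G_{\bar t}$ is strongly metrically subregular at $(\bar x,0)$ with some constant $\kappa$ and neighborhood $\ball_X[\bar x,\alpha]$. I will choose $\mu:=1/(2\kappa)$ and $\kappa':=3\kappa$, so that $\kappa\mu<1$ and $\kappa'>\kappa/(1-\kappa\mu)$; shrink $\alpha$ by (ii) so that the Lipschitz-anchored bound with constant $\mu$ holds on $\ball_X[\bar x,\alpha]$ for $s\in\ball_P[\bar t,\alpha]$; and use continuity of $f$ to find $r\in(0,\alpha/2]$ with $\Vert f(s,\bar x)-f(\bar t,\bar x)\Vert$ as small as needed for $s\in\ball_P[\bar t,r]$.

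For $(s,x)\in\Omega\cap(\ball_P[\bar t,r]\times\ball_X[\bar x,r])$, I will introduce the single-valued perturbation $g_s(u):=f(s,u)-f(\bar t,u)$. Its value $g_s(\bar x)=\bar z_s:=f(s,\bar x)-f(\bar t,\bar x)$ is single, and $g_s$ is $\mu$-Lipschitz anchored at $\bar x$ by (ii); hence Proposition \ref{propStability} applies with base $F:=G_{\bar t}$ and reference $(\bar x,0)$ and yields strong metric subregularity of $G_s=g_s+G_{\bar t}$ at $(\bar x,\bar z_s)$ with constant $\kappa'$ on $\ball_X[\bar x,\beta]$ for a chosen $\beta\in(0,\alpha]$. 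Evaluating this estimate at $u=x$, using $0\in G_s(x)$, will yield $\Vert x-\bar x\Vert\leq\kappa'\Vert\bar z_s\Vert\leq\kappa'\beta$, so $x$ will lie in a controlled neighborhood of $\bar x$.

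The key — and hardest — step will be transferring subregularity at the anchor $(\bar x,\bar z_s)$ to subregularity at the shifted graph point $(x,0)$. I plan to do this by combining the triangle inequality $\Vert u-x\Vert\leq\Vert u-\bar x\Vert+\Vert x-\bar x\Vert$, the subregularity estimate $\Vert u-\bar x\Vert\leq\kappa'\dist(\bar z_s,G_s(u))$, the reverse-triangle bound $\dist(\bar z_s,G_s(u))\leq\Vert\bar z_s\Vert+\dist(0,G_s(u))$, and the bound on $\Vert x-\bar x\Vert$ established above; after shrinking $r$ (and hence $\bar z_s$) further and invoking $\kappa\mu<1$ to absorb the self-referential terms, this should produce an estimate of the form $\Vert u-x\Vert\leq\tilde\kappa\,\dist(0,G_s(u))$ on a ball $\ball_X[x,c]$ whose constants $\tilde\kappa,c$ depend only on the choice $(\bar t,\bar x)$. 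The uniqueness part of strong subregularity of $G_s$ at $(x,0)$ will follow by applying the same inequality to any competing zero of $G_s$ in $\ball_X[x,c]$.

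Finally, the family $\{\ball_P(\bar t,r_{(\bar t,\bar x)})\times\ball_X(\bar x,r_{(\bar t,\bar x)})\}_{(\bar t,\bar x)\in\Omega}$ is an open cover of the compact $\Omega$; a finite subcover with local constants $(\tilde\kappa_i,c_i)_{i=1}^N$ yields the uniform $\kappa':=\max_i\tilde\kappa_i$ and $c:=\min_i c_i$. I expect the principal obstacle to be precisely the transfer from $(\bar x,\bar z_s)$ to $(x,0)$: strong metric subregularity ``at'' a point does not in general propagate to nearby graph points, so $r$ must be tuned finely in terms of $\beta$, $\kappa$, and $\mu$, which is the technique signaled by the pointer to \cite[Theorem 2.6]{CPR2019}.
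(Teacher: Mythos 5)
The decisive step of your proposal — transferring strong metric subregularity of $G_s$ at the anchor $(\bar x,\bar z_s)$ to strong metric subregularity at the shifted graph point $(x,0)$ — does not go through. Your chain
$$
\Vert u-x\Vert\leq\Vert u-\bar x\Vert+\Vert x-\bar x\Vert
\leq\kappa'\dist(\bar z_s,G_s(u))+\kappa'\Vert\bar z_s\Vert
\leq\kappa'\dist(0,G_s(u))+2\kappa'\Vert\bar z_s\Vert
$$
ends with a \emph{purely additive} term $2\kappa'\Vert\bar z_s\Vert$, not a coefficient of $\Vert u-x\Vert$. There is no self-referential term here for the relation $\kappa\mu<1$ to absorb: that absorption is already spent inside Proposition \ref{propStability} in producing $\kappa'$. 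Shrinking $r$ shrinks $\Vert\bar z_s\Vert=\Vert f(s,\bar x)-f(\bar t,\bar x)\Vert$ but never makes it vanish for $s\neq\bar t$; and for the finite subcover you must eventually fix $r$, at which point the additive term is a fixed positive number. An estimate $\Vert u-x\Vert\leq\kappa'\dist(0,G_s(u))+\gamma$ with $\gamma>0$ is not strong metric subregularity at $(x,0)$: it does not even force $x$ to be the locally unique zero, let alone give the linear rate.

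The reason the analogous transfer works for part (iv) of Theorem \ref{thmParametricStabilitySubregularity} is that there the hypothesis is subregularity \emph{around} $(\bar x,0)$. One can then invoke subregularity of $G_{\bar t}$ directly at the nearby graph point $(x,-g_s(x))$ (which lies on $\gph G_{\bar t}$ because $0\in G_s(x)$) and apply Proposition \ref{propStability} with the perturbation $u\mapsto g_s(u)-g_s(x)$, which is anchored at $x$ rather than at $\bar x$; this produces subregularity of $G_s$ directly at $(x,0)$ with no leftover constant. Under the hypothesis of the present theorem one only knows $G_{\bar t}$ is subregular \emph{at} $(\bar x,0)$, so this mechanism is not available, and your proposal supplies no replacement for it. You need a genuinely different argument to move the base point in $X$; Proposition \ref{propStability} only moves the $Y$-coordinate of the reference pair and shrinks the radius. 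As written, this is a gap, not a matter of fine-tuning $r$.
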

\begin{proof}
	The proof is similar to the proof of Theorem \ref{thmParametricStabilitySubregularityAt}, but instead of using Theorem \ref{thmStabilitySubregularity}, we apply Proposition \ref{propStability}.
\end{proof}

The following statement ensures that uniform strong metric regularity is maintained along continuous paths. This means that as one follows a continuous path within the domain, the property of strong metric subregularity remains consistent and uniform, providing stability and predictability in the behaviour of the system

	\begin{theorem}
		\label{thmStabilitySubreg}
		Let $T>0$ be fixed and let $(X,\Vert\cdot\Vert)$ and $(Y,\Vert \cdot\Vert)$ be Banach spaces. Consider a set-valued mapping $F:X\tto Y$ and a continuous single-valued mapping $f:[0,T]\times X\longrightarrow Y$, and two continuous mappings $\varphi:[0,T]\longrightarrow X$ and $p:[0,T]\longrightarrow Y$ such that 
		\begin{enumerate}
			\item[\rm (i)]  for each $t\in [0,T]$ the mapping $X\ni x\longmapsto G_t(x):=f(t,x)+F(x)$ is strongly metrically subregular around $(\varphi(t),p(t))$;
			\item[\rm (ii)]  for each $t\in [0,T]$ and each $\mu>0$ there is $\delta>0$ such that for each $x,u\in \ball_{X}[\varphi(t),\delta]$ and each $s\in (t-\delta, t+\delta)$ we have
			\begin{eqnarray*}
				\Vert f(s,u)-f(t,u)-(f(s,x)-f(t,x))\Vert \leq \mu \Vert x-u\Vert.
			\end{eqnarray*}
		\end{enumerate}
		Then: \begin{enumerate}
			\item[\rm (iii)] there are positive constants $a, b$, and $\kappa$ such that for each $t\in [0,T]$ the mapping $G_t$ is strongly metrically subregular around $(\varphi(t),p(t))$ with the constant $\kappa$ and \nns $\ball_X[\varphi(t),a]$ and $\ball_{Y}[p(t),b]$;
			\item[\rm (iv)] there are $c>0$ and $\kappa'>0$ such that for each $t\in [0,T]$ the mapping $G_t$ is strongly metrically subregular at $(\varphi(t),p(t))$ with the constant $\kappa'$ and \nn $\ball_X[\varphi(t), c]$.
			\end{enumerate}
	\end{theorem}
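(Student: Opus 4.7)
\proof
The plan is to reduce this statement to the setting of Theorems \ref{thmParametricStabilitySubregularity} and \ref{thmParametricStabilitySubregularityAt} by taking $P := [0,T]$ and considering the graph of $\varphi$ as the compact parameter set.

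First I would normalize the target point to zero. Define the continuous mapping $\tilde f : [0,T] \times X \longrightarrow Y$ by $\tilde f(t,x) := f(t,x) - p(t)$, and set $\tilde G_t(x) := \tilde f(t,x) + F(x) = G_t(x) - p(t)$. Then $G_t$ is strongly metrically subregular around (resp.\ at) $(\varphi(t), p(t))$ if and only if $\tilde G_t$ is strongly metrically subregular around (resp.\ at) $(\varphi(t), 0)$, with identical constants and neighborhoods in $X$. Moreover, since the $p(t)$- and $p(s)$-terms cancel out in
\begin{eqnarray*}
\tilde f(s,u) - \tilde f(t,u) - \bigl(\tilde f(s,x) - \tilde f(t,x)\bigr) = f(s,u) - f(t,u) - \bigl(f(s,x) - f(t,x)\bigr),
\end{eqnarray*}
assumption (ii) of the present theorem, applied with $\bx := \varphi(t)$, yields exactly condition (ii) of Theorem \ref{thmParametricStabilitySubregularity} for $\tilde f$ at the point $(t, \varphi(t))$.

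Next I would introduce $\Omega := \{(t, \varphi(t)) : t \in [0,T]\} \subset [0,T] \times X$. Since $t \longmapsto (t, \varphi(t))$ is continuous and $[0,T]$ is compact, $\Omega$ is compact. By construction, for every $(t, \bx) \in \Omega$ we have $\bx = \varphi(t)$, so assumption (i) of the present theorem translates into assumption (i) of Theorem \ref{thmParametricStabilitySubregularity} for the mapping $\tilde G_t$ at the point $(\bx, 0)$, and similarly for Theorem \ref{thmParametricStabilitySubregularityAt}.

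Applying Theorem \ref{thmParametricStabilitySubregularity} to $F$, $\tilde f$, and $\Omega$ yields positive constants $a, b, \kappa$ (independent of $t$) such that $\tilde G_t$ is strongly metrically subregular around $(\varphi(t), 0)$ with constant $\kappa$ and neighborhoods $\ball_X[\varphi(t),a]$, $\ball_Y[0,b]$ for every $t \in [0,T]$. Translating back via the shift by $p(t)$ in the $Y$-component gives conclusion (iii). In exactly the same way, Theorem \ref{thmParametricStabilitySubregularityAt} produces uniform constants $c, \kappa'$ for the pointwise version, giving (iv). The only place one must be careful is the compactness of $\Omega$ and the verification that assumption (ii) is preserved under the shift by $p$, both of which are straightforward from the continuity of $\varphi$ and $p$ and from the fact that the perturbation $p(s) - p(t)$ does not depend on $x$ and thus drops out of the differences in (ii).
\endproof
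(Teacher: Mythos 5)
Your proof is correct and follows essentially the same route as the paper: both reduce the statement to Theorems \ref{thmParametricStabilitySubregularity} and \ref{thmParametricStabilitySubregularityAt}. The only cosmetic difference is in the reduction: you absorb the shift by setting $\tilde f(t,x) := f(t,x)-p(t)$ with $P:=[0,T]$ and $\Omega := \{(t,\varphi(t)) : t\in[0,T]\}$, whereas the paper enlarges the parameter space to $P := [0,T]\times Y$, takes $\Omega := \{(t,p(t),\varphi(t)) : t\in[0,T]\}$, and sets $\mathbf{f}((p,y),x) := f(p,x)-y$; your version is a bit more direct, but the two are equivalent and the verifications (compactness of $\Omega$, cancellation of the $p$-terms in condition (ii), translation of the $Y$-neighborhood back to $\ball_Y[p(t),b]$) are identical in substance.
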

	\begin{proof}
For {\rm (iii)}, apply Theorem \ref{thmParametricStabilitySubregularity}, and for {\rm (iv)}, apply Theorem \ref{thmParametricStabilitySubregularityAt}, both with $P := [0, T] \times Y$, the compact set $\Omega := \bigcup_{t \in [0, T]} (t, p(t), \varphi(t))$, and the function $f(t, x) := f(p, x) - y$ for $t = (p, y) \in P$ and $x \in X$. 
	\end{proof}

 \section{Uniform semismooth* path-following method}
 
 In this section, we delve into the semismooth* path-following method, designed to address a  problem of finding a mapping $x:[0,T]\longrightarrow\mathbb{R}^n$ that satisfies the inclusion \eqref{eqInclusionParametric},
 where $F:\mathbb{R}\times \mathbb{R}^n\tto \mathbb{R}^n$ is a set-valued mapping and $T>0$. Particularly, we focus on scenarios, where $F$ exhibits uniform semismoothness$^*$ properties.
 
 The challenge of solving \eqref{eqInclusionParametric} is non-trivial due to the inherent complexities associated with the behaviour of set-valued mappings. To systematically address this, for each time point $t\in [0,T]$, we define a set-valued mapping $G_t:\mathbb{R}^n\tto \mathbb{R}^n$ and a corresponding solution mapping $S:\mathbb{R}\tto \mathbb{R}^n$ of \eqref{eqInclusionParametric}, expressed as
 \begin{eqnarray*}
 	G_t(x) := F(t,x)\quad \text{for }\quad x\in \mathbb{R}^n\quad\text{and}\quad	S(t) := \lbrace x\in \mathbb{R}^n : 0 \in F(t,x)\rbrace.
 \end{eqnarray*}
 
The following lemma provides an approximation estimate for $(A, B) \in \mathcal{A}_{\text{reg}} G_t (x, y)$ under the assumption of uniform semismoothness$^*$.
 \begin{lemma}
 	\label{lemMatrices}
 Assume that for each $\varepsilon>0$ there is $\beta>0$ such that for each $t\in [0,T]$ and each $ (u,y)\in\left(\ball_{\mathbb{R}^n}[x(t),\beta]\times \ball_{\mathbb{R}^n}[0,\beta]\right)\cap \gph\,G_t$ and each $ ( y^*, x^*)\in  {D}^*G_t( u,y)$ we have
 \begin{eqnarray}
 	\vert \left< x^*,u-x(t)\right>-\left<y^*,y\right> \vert\leq \varepsilon \Vert ( x^*,y^*)\Vert \Vert ( u,y)-( x(t),0)\Vert.
 \end{eqnarray}
 	Then for each $\varepsilon>0$ there is $\beta>0$ such that for each $t\in [0,T]$ and each $(u,y)\in\left(\ball_{\mathbb{R}^n}[x(t),\beta]\times \ball_{\mathbb{R}^n}[0,\beta]\right)\cap \gph\,G_t$ and for each $(A,B)\in \mathcal{A}_{\text{reg}}\,G_t({u},{y}) $ we have
 	\begin{eqnarray*}
 		\Vert (u-A^{-1}By-x(t)\Vert \leq \varepsilon \Vert A^{-1}\Vert \Vert (A\,\arrowvert\, B)\Vert_F 	\Vert( u,y) -(x(t),0)\Vert.
 	\end{eqnarray*}
 	
 \end{lemma}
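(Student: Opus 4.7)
The plan is to apply the uniform semismoothness$^*$ hypothesis row by row to a pair $(A,B)\in\mathcal{A}_{\text{reg}}\,G_t(u,y)$. By definition of $\mathcal{A}_{\text{reg}}$, for every $i\in\{1,\dots,n\}$ the pair $((B)_i^T,(A)_i^T)$ lies in $\gph\,D^*G_t(u,y)$, so we can feed $y^*=(B)_i^T$ and $x^*=(A)_i^T$ into the assumed inequality. Doing so yields, for the same $\beta$ supplied by the hypothesis,
\begin{equation*}
\bigl|\langle (A)_i^T,u-x(t)\rangle-\langle (B)_i^T,y\rangle\bigr|
\leq \varepsilon\,\|((A)_i^T,(B)_i^T)\|\,\|(u,y)-(x(t),0)\|.
\end{equation*}
The key observation is that the left-hand side is precisely the $i$-th component of the vector $A(u-x(t))-By$, since $(A)_i(u-x(t))-(B)_i y=\langle (A)_i^T,u-x(t)\rangle-\langle (B)_i^T,y\rangle$.

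Next I would square the $i$-th inequality and sum over $i$. Using that $\|((A)_i^T,(B)_i^T)\|=\max\{\|(A)_i^T\|,\|(B)_i^T\|\}\leq \bigl(\|(A)_i^T\|^2+\|(B)_i^T\|^2\bigr)^{1/2}$ (per the max-norm convention on Cartesian products stated in Section 2), the sum $\sum_i\|((A)_i^T,(B)_i^T)\|^2$ is bounded by $\|A\|_F^2+\|B\|_F^2=\|(A\mid B)\|_F^2$. This produces
\begin{equation*}
\|A(u-x(t))-By\|\leq \varepsilon\,\|(A\mid B)\|_F\,\|(u,y)-(x(t),0)\|.
\end{equation*}

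Finally, since $A$ is invertible (which is part of the definition of $\mathcal{A}_{\text{reg}}$), I would write $u-x(t)-A^{-1}By = A^{-1}\bigl(A(u-x(t))-By\bigr)$ and apply the submultiplicativity of the operator norm to obtain the claimed bound
\begin{equation*}
\|u-A^{-1}By-x(t)\|\leq \|A^{-1}\|\,\|A(u-x(t))-By\|\leq \varepsilon\,\|A^{-1}\|\,\|(A\mid B)\|_F\,\|(u,y)-(x(t),0)\|.
\end{equation*}
There is no genuine obstacle here; the only thing to be careful about is the norm bookkeeping, namely that the max-norm on $\mathbb{R}^n\times\mathbb{R}^n$ dominates each factor but is itself dominated by the Euclidean sum, so that the row-wise squared estimates combine to the Frobenius norm $\|(A\mid B)\|_F$ appearing in the conclusion. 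The constant $\beta$ can be taken to be exactly the one granted by the hypothesis for the given $\varepsilon$, so no shrinking of the neighborhood is needed.
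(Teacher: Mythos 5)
Your proof is correct and follows essentially the same route as the paper's (which itself adapts \cite[Proposition 4.3]{GO2022}): feed each coderivative row pair $((B)_i^T,(A)_i^T)$ into the uniform estimate, recognize the left-hand side as the $i$-th component of $A(u-x(t))-By$, combine the rows via the Frobenius norm, and finish with $\|A^{-1}\|$. You are slightly more explicit than the paper about the norm bookkeeping, observing that the product max-norm $\|((A)_i^T,(B)_i^T)\|$ is dominated by $(\|(A)_i^T\|^2+\|(B)_i^T\|^2)^{1/2}$ so that the row-wise squares sum to at most $\|(A\mid B)\|_F^2$; the paper states this step more loosely, so your care here is a small improvement rather than a divergence.
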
	
 \begin{proof}We follow the proof from \cite[Proposition 4.3]{GO2022}.
 	Fix any $\varepsilon > 0$ and choose a corresponding $\beta > 0$ such that $\beta$ meets the conditions of the lemma. Consider any $t \in [0, T]$ and any $(u, y)\in (\ball_{\mathbb{R}^n}[x(t), \beta] \times \ball_{\mathbb{R}^n}[0, \beta]) \cap \gph\, G_t$. Fix any $(A,  B) \in \mathcal{A}_{\text{reg}}\,G_t( u, y)$.
 	
 	According to the definition of $\mathcal{A}_{\text{reg}}\, G_t( u, y)$, for each pair $(y^*_i, x^*_i) \in {D}^*G_s( u, y)$, the $i$-th component of the vector $A(u - x(t)) - By$ is given by $\langle x^*_i, u - x(t) \rangle - \langle y^*_i, y \rangle$. The absolute value of this expression can be bounded by $\varepsilon \| (x^*_i,  y^*_i) \| \| (u, y) - ( x(t), 0) \|$.
 	
 	The Euclidean norm of the vector formed by the norms $\| (x^*_i, y^*_i) \|$ is equivalent to the Frobenius norm of the matrix $(A \,|\,  B)$. Thus, we have
 	$$
 	\| A(u - x(t)) - By \| \leq \varepsilon \| (A \,|\, B) \|_F \| ( u, y) - ( x(t), 0) \|.
 	$$
 	Then
 \begin{eqnarray*}
 	&&	\| (u - A^{-1}By- x(t) \|	 \leq   \| A^{-1} \|	\| A(u - x(t)) - By \|\\
 	&\leq& \varepsilon \| A^{-1} \| \| (A \,| \, B) \|_F \| ( u, y) - ( x(t), 0) \|.
 \end{eqnarray*}
 \end{proof}

 We propose the semismooth$^*$ path-following method for solving \eqref{eqInclusionParametric}, based on the semismooth$^*$ method introduced in \cite{GO2021}. This method leverages the inherent properties of semismoothness$^*$ to efficiently track solutions over a given interval.
 
 To implement this, we begin by defining a uniform grid over the interval $[0, T]$, partitioning it into $N$ equal segments. Each segment corresponds to a discrete time step, $ t_k $, where $ k = 0, 1, \ldots, N $ and $ t_k = kh $ with $ h = T/N $.  At each  time point $ t_{k+1} $, we apply one step of the semismooth$^*$ method. Starting from an initial point $ x_k $, we compute the next iterate $ x_{k+1} $.
 \begin{algorithm}\label{algPathFollowingUniform}
 	\begin{enumerate}
 		\item 	Choose $N\in \mathbb{N}$ and $x_0$  close to $x(0)$, set the step $h:={T}/{N}$, set $t_k:=k h$ for $k= 0, 1,2,\dots, N$, and the counter $k:=0$.
 		\item If $0\in F(t_{k+1}, x_k)$, then set $x_{k+1}:=x_k$ and if $k<N-1$ then $k:=k+1$ and go  to 2; otherwise stop the algorithm.
 		\item \textbf{Approximation step}: Compute
 		$(\hat{x}_{k+1},\hat{y}_{k+1})\in \gph\,G_{t_{k+1}}$
 		such that  
 		\begin{eqnarray*}
 			\mathcal{A}_{\text{reg}}\, G_{t_{k+1}} (\hat{x}_{k+1},\hat{y}_{k+1})\neq \emptyset\quad\text{and}\quad	\Vert (\hat{x}_{k+1},\hat{y}_{k+1})-(x(t_{k}),0)\Vert\leq \eta \Vert x_{k}-x(t_{k})\Vert.
 		\end{eqnarray*}
 		\item \textbf{Newton step:} Select $n\times n$ matrices $(A_{k+1}, B_{k+1})\in \mathcal{A}_{\text{reg}}\, G_{t_{k+1}}
 	( \hat{x}_{k+1},\hat{y}_{k+1})$ and compute the new iterate via $x_{k+1}=\hat{x}_{k+1}-A^{-1}_{k+1}B_{k+1} \hat{y}_{k+1} $.
 		\item If $k=N-1$,  stop the algorithm.
 		\item Set $k:=k+1$ and go to 2.
 	\end{enumerate}
 \end{algorithm}

The following theorem provides a bound on the grid error of our semismooth$^*$ path-following method under the assumption of uniform semismoothness$^*$. This error analysis is crucial as it quantifies the accuracy of the method when applied to the problem \eqref{eqInclusionParametric}. By establishing the conditions under which the grid error remains controlled, we can ensure that the iterates produced by the algorithm remain close to the exact solution trajectory. This theorem leverages the Lipschitz continuity of the solution mapping and certain properties of the set-valued mapping $F$ to derive a bound on the error that depends on the grid size and other parameters.

 \begin{theorem}\label{thmGridErrorUniform} Consider a set-valued mapping $F :\mathbb{R}\times \mathbb{R}^n\tto \mathbb{R}^n$, with a closed graph, positive numbers  $T, \eta, \ell, \alpha$ and $\kappa$, and a mapping $x:[0,T] \longrightarrow \mathbb{R}^n$ satisfying \eqref{eqInclusionParametric}. Suppose that $x(\cdot)$ is Lipschitz continuous on $[0,T]$ with the constant $\ell$.
 	Moreover, assume that for each $\varepsilon>0$ there is $\beta>0$ such that for each $t\in [0,T]$ and each $ (u,y)\in\left(\ball_{\mathbb{R}^n}[x(t),\beta]\times \ball_{\mathbb{R}^n}[0,\beta]\right)\cap \gph\,G_t$ and each $ ( y^*, x^*)\in  {D}^*G_t( u,y)$ we have
 	\begin{eqnarray}
 		\label{eqSemismoothnessCon}
 		\vert \left< x^*,u-x(t)\right>-\left<y^*,y\right> \vert\leq \varepsilon \Vert ( x^*,y^*)\Vert \Vert ( u,y)-( x(t),0)\Vert
 	\end{eqnarray}
 	and $\mathcal{G}_{G_t,x(t)}^{\eta, \kappa}(u)\neq \emptyset$ for each $u \in \ball_{\mathbb{R}^n}[x(t),\alpha]$ with $u\notin S(t)$.  Then there is $N_0\in \mathbb{N}$ such that for each $N>N_0$ and each $x_0\in \ball_{\mathbb{R}^n}[x(0), \ell h]$, where $h:=T/N$, there are points $x_1,x_2,\dots, x_{N-1}, x_{N}\in \mathbb{R}^n$, generated by Algorithm \ref{algPathFollowing}, satisfying
 	\begin{eqnarray*}
 		\max_{0\leq k \leq N}\Vert x_k-x(t_k)\Vert \leq\ell h.
 	\end{eqnarray*}
 \end{theorem}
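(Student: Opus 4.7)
The plan is to proceed by induction on $k$, maintaining $\|x_k - x(t_k)\| \le \ell h$ as the inductive invariant; the base case $k = 0$ is exactly the hypothesis $x_0 \in \ball_{\mathbb{R}^n}[x(0), \ell h]$.

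First I would fix all constants. Pick $\varepsilon \in (0, 1/(\kappa(\eta+1))]$ and apply Lemma \ref{lemMatrices} to this $\varepsilon$: the uniform semismoothness$^*$ hypothesis \eqref{eqSemismoothnessCon} delivers some $\beta > 0$ such that for every $t \in [0,T]$, every $(u,y) \in (\ball_{\mathbb{R}^n}[x(t),\beta] \times \ball_{\mathbb{R}^n}[0,\beta]) \cap \gph G_t$, and every $(A,B) \in \mathcal{A}_{\text{reg}}\, G_t(u,y)$,
$$\|u - A^{-1} B y - x(t)\| \le \varepsilon\, \|A^{-1}\|\, \|(A\mid B)\|_F\, \|(u,y) - (x(t), 0)\|.$$
Then choose $N_0$ so that $h_0 := T/N_0$ satisfies $2\ell h_0 \le \alpha$ and $(\eta+1)\ell h_0 \le \beta$; for any $N > N_0$ the mesh $h = T/N \le h_0$ inherits both estimates.

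For the inductive step, suppose $\|x_k - x(t_k)\| \le \ell h$, so by Lipschitz continuity of $x(\cdot)$ one has $\|x_k - x(t_{k+1})\| \le 2\ell h \le \alpha$. In the regular branch $x_k \notin S(t_{k+1})$, the hypothesis $\mathcal{G}^{\eta,\kappa}_{G_{t_{k+1}}, x(t_{k+1})}(x_k) \neq \emptyset$ lets the approximation step produce $(\hat{x}_{k+1}, \hat{y}_{k+1}) \in \gph G_{t_{k+1}}$ with $\|(\hat{x}_{k+1}, \hat{y}_{k+1}) - (x(t_k), 0)\| \le \eta \|x_k - x(t_k)\| \le \eta \ell h$, and the Newton step selects matrices satisfying $\|A_{k+1}^{-1}\|\, \|(A_{k+1}\mid B_{k+1})\|_F \le \kappa$. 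Inserting $x(t_k)$ by the triangle inequality gives $\|(\hat{x}_{k+1}, \hat{y}_{k+1}) - (x(t_{k+1}), 0)\| \le (\eta+1)\ell h \le \beta$, so the displayed estimate at $t = t_{k+1}$ yields
$$\|x_{k+1} - x(t_{k+1})\| = \|\hat{x}_{k+1} - A_{k+1}^{-1} B_{k+1} \hat{y}_{k+1} - x(t_{k+1})\| \le \varepsilon \kappa (\eta+1) \ell h \le \ell h,$$
closing the induction on this branch.

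The main obstacle is the \emph{early stop} branch, where $0 \in F(t_{k+1}, x_k)$ and the algorithm simply sets $x_{k+1} := x_k$ without performing a Newton step. Here the $\mathcal{G}^{\eta,\kappa}$ hypothesis is silent (it excludes solution points), so Lemma \ref{lemMatrices} cannot be invoked directly. I would handle this via uniform local uniqueness of the solution along the path: combining the semismoothness$^*$ bound \eqref{eqSemismoothnessCon} with nonemptiness of $\mathcal{G}^{\eta,\kappa}$ at non-solutions arbitrarily close to $x(t_{k+1})$, a standard contraction argument (mirroring Lemma \ref{lemMatrices}) produces a uniform radius $\rho > 0$ with $S(t) \cap \ball_{\mathbb{R}^n}(x(t), \rho) = \{x(t)\}$ for every $t \in [0, T]$. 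Enlarging $N_0$ if necessary so that $2\ell h \le \rho$, any $x_k \in S(t_{k+1})$ within distance $2\ell h$ of $x(t_{k+1})$ must coincide with $x(t_{k+1})$, so $\|x_{k+1} - x(t_{k+1})\| = 0$. This finishes the induction.
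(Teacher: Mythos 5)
Your Newton-step branch follows the same inductive structure as the paper's proof — apply Lemma \ref{lemMatrices}, shrink the mesh so the approximation step lands inside the semismoothness ball, then contract. The arithmetic differs slightly: the paper sets $\varepsilon=\tfrac{1}{2\eta\kappa}$ and bounds $\Vert(\hat x_{k+1},\hat y_{k+1})-(x(t_{k+1}),0)\Vert\leq\eta\Vert x_k-x(t_{k+1})\Vert\leq 2\eta\ell h$, whereas you work from the Algorithm's literal approximation-step inequality (centred at $x(t_k)$) and obtain $(\eta+1)\ell h$; both choices close the induction. You also quietly fix a factor-of-two slip in the paper's choice of $N_0$: the nonemptiness hypothesis is applied at $u=x_k$, $t=t_{k+1}$, so one needs $\Vert x_k-x(t_{k+1})\Vert\leq\alpha$, i.e.\ $2\ell h\leq\alpha$, not just $\ell h\leq\alpha$ as the paper writes.

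Your observation about the early-stop branch of step 2 is a genuine gap in the paper's proof, which tacitly assumes $x_k\notin S(t_{k+1})$ throughout. However, your proposed repair does not work. The stated hypotheses do \emph{not} imply any local uniqueness of $S(t)$: the semismoothness bound \eqref{eqSemismoothnessCon} is satisfied trivially on any flat plateau of solutions (there the coderivative forces $x^*=0$), and the $\mathcal{G}^{\eta,\kappa}$ nonemptiness assumption is vacuous whenever $\ball_{\mathbb{R}^n}[x(t),\alpha]\subset S(t)$. Concretely, for $n=1$ take $x(t)=t/M$ (so $\ell=1/M$) and let $G_t(u)=\{0\}$ for $\vert u-x(t)\vert\leq c$ with linear branches of slope $1$ outside; choose $\alpha<c$. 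All hypotheses of the theorem hold, yet starting from $x_0=\ell h$ the algorithm triggers the early stop at every step, so $x_k=\ell h$ for all $k$ while $x(t_k)=k\ell h$, and already $\Vert x_3-x(t_3)\Vert=2\ell h>\ell h$. Thus the ``contraction argument mirroring Lemma \ref{lemMatrices}'' cannot produce the uniform radius $\rho$ you claim — there are no non-solutions near $x(t)$ from which to contract. The early-stop defect can only be closed by assuming local uniqueness of the path explicitly, e.g.\ strong metric subregularity around $(x(t),0)$ as in Proposition 5.2, which does not follow from the hypotheses you invoke.
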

 \begin{proof} Using Lemma \ref{lemMatrices} with $\varepsilon:=\tfrac{1}{2\eta\kappa}$ and find a corresponding $\beta>0$. Find $N_0\in \mathbb{N}$ such that $T/N_0 \leq \min\lbrace \beta/(2 \ell\eta), \alpha/\ell\rbrace$. Fix any $N>N_0$ and let $h:=\tfrac{T}{N}$.	
 	Fix any $x_0\in \ball_{\mathbb{R}^n}[x(t_0), \ell h]$.

 	Assume that for some $k\in\lbrace 0, 1,2,\dots,N-1 \rbrace$ we have
 	\begin{eqnarray*}
 		\Vert x_{k}-x(t_{k})\Vert \leq \ell h.
 	\end{eqnarray*}
 	Clearly, $  		\Vert x_{k}-x(t_{k})\Vert \leq \ell h\leq \alpha$. Hence there are $(\hat{x}_{k+1},\hat{y}_{k+1}, A_{k+1},  B_{k+1}) \in \mathcal{G}_{F,x(t_{k+1})}^{\eta, \kappa}(x_k)$ and we have
 	\begin{eqnarray*}
 		\Vert (\hat{x}_{k+1},\hat{y}_{k+1})-(x(t_{k+1}),0)\Vert\leq\eta\Vert x_k -x(t_{k+1})\Vert \leq\eta\Vert x_k -x(t_k)\Vert+\eta\Vert x(t_k) -x(t_{k+1})\Vert\leq 2 \eta  \ell h\leq\beta.
 	\end{eqnarray*}
 Then
\begin{eqnarray*}
	\Vert x_{k+1}-x(t_{k+1})\Vert &=& 	\Vert \hat{x}_{k+1}-A^{-1}_{k+1}B_{k+1} \hat{y}_{k+1}-x(t_{k+1})\Vert\\
	&\leq& \varepsilon \Vert A^{-1}_{k+1}\Vert \Vert (A_{k+1}\,\arrowvert\,  B_{k+1})\Vert_F  \Vert \Vert(\hat{x}_{k+1},\hat{y}_{k+1})-(x(t_{k+1}),0))\Vert
	\\ &\leq& \varepsilon \eta \Vert A^{-1}_{k+1}\Vert \Vert (A_{k+1}\,\arrowvert\, B_{k+1})\Vert_F 	\Vert x_k-x(t_{k+1})\Vert  
	\leq \tfrac{1}{2}	\Vert x_k-x(t_{k+1})\Vert\\ &\leq& 	\tfrac{1}{2}	\Vert x_k-x(t_k)\Vert+\tfrac{1}{2}	\Vert x(t_k)-x(t_{k+1})\Vert\leq \ell h.
\end{eqnarray*}
 \end{proof}
The following result is derived from \cite[Proposition 2.8]{BM2024}, which provides a  result for the existence of matrices that are instrumental in ensuring the regularity conditions required for our analysis. For further details and a broader context, see also \cite[Theorem 3.1]{BM2021}.

\begin{proposition} \label{propMatricesExistence} Let $F: \mathbb{R}^n \rightrightarrows \mathbb{R}^n$ be a set-valued mapping, with a closed graph, and $\kappa>0$ be given. Assume that $F$ is strongly metrically subregular at $(\hat{x}, \hat{y}) \in \gph\,F$ with a constant $\kappa>0$. Then there is a matrix $B \in \mathbb{R}^{n \times n}$ with $\|B\| \leq \kappa$ such that $(I_n, B) \in \mathcal{A}_{\text{reg}}\,F(\hat{x}, \hat{y})$ and $e_i\in D^*F(\hat{x},\hat{y})((B)_i^T)$ for each $i=1,2,\dots,n$. 
\end{proposition}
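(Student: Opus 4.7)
The plan is to build $B$ row by row: for each $i \in \{1,\dots,n\}$ I will exhibit a vector $p_i \in \mathbb{R}^n$ with $\|p_i\|\leq\kappa$ and $(p_i,e_i) \in \gph\,D^*F(\hat x,\hat y)$, then set $(B)_i^T := p_i^T$. Once these vectors are in hand, $(I_n,B)\in \mathcal{A}_{\text{reg}}\,F(\hat x,\hat y)$ follows immediately from the invertibility of $I_n$ together with the defining coderivative conditions, and the bound $\|B\|\leq\kappa$ is a consequence of the row-wise bounds. Unfolding the coderivative definition, $(p_i,e_i)\in\gph\,D^*F(\hat x,\hat y)$ is equivalent to $(e_i,-p_i)\in N_{\gph\,F}(\hat x,\hat y)$, so the task reduces to producing a limiting normal with these components.

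For each fixed $i$, I would construct the required limiting normal by a projection/constrained-optimization argument. Pick a sequence $t_k\downarrow 0$ and minimize the linear functional $(x,y)\mapsto-\langle e_i, x-\hat x\rangle$ over the closed set $\gph\,F\cap\ball_{\mathbb{R}^n\times\mathbb{R}^n}[(\hat x,\hat y),t_k]$. Closedness delivers a minimizer $(x_k,y_k)$, and writing the first-order optimality condition (treating the ball as a smooth constraint) yields a Fréchet normal of the form
\begin{eqnarray*}
(a_k,b_k) := \bigl(e_i+\lambda_k(x_k-\hat x),\,\lambda_k(y_k-\hat y)\bigr)\in \hat N_{\gph\,F}(x_k,y_k)
\end{eqnarray*}
for some nonnegative Lagrange multiplier $\lambda_k$.

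The passage to the limit is where strong metric subregularity does the work. By definition there is a neighborhood $U$ on which $\|x-\hat x\|\leq \kappa\,\dist(\hat y, F(x))$ and $F^{-1}(\hat y)\cap U=\{\hat x\}$; in particular, $\|x_k-\hat x\|\leq \kappa\|y_k-\hat y\|$ for all large $k$, and the isolated-point property forces $(x_k,y_k)\to(\hat x,\hat y)$. Extracting a subsequence on which the Fréchet normals converge (rescaling if necessary), we obtain a limit $(e_i,-p_i)$, which by the very definition of the limiting normal cone lies in $N_{\gph\,F}(\hat x,\hat y)$. The SMSR modulus $\kappa$ translates, via the estimate on $\lambda_k(y_k-\hat y)$, into the bound $\|p_i\|\leq\kappa$.

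The main obstacle is controlling the multiplier $\lambda_k$ in the limit. The dangerous degenerations are $\lambda_k\to 0$, which would collapse the limit to $(e_i,0)$ (admissible, with $p_i=0$), and $\lambda_k\to\infty$, which would wash out $e_i$ from the first component after renormalization. Strong metric subregularity, coupled with the isolated-point property, rules out the latter: an unbounded multiplier would require the graph to concentrate along a direction incompatible with $\|x_k-\hat x\|\leq\kappa\|y_k-\hat y\|$. The cleanest execution of this argument is the one given in \cite[Proposition 2.8]{BM2024}, which itself extends ideas from \cite[Theorem 3.1]{BM2021}.
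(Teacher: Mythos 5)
The paper offers no proof of this proposition at all: it is imported verbatim from \cite[Proposition 2.8]{BM2024}, with \cite[Theorem 3.1]{BM2021} cited as background. So there is nothing in the paper's text to compare against; your attempt is therefore being measured against the cited source and against internal consistency.

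Your sketch has the right general flavor (a variational construction of limiting normals, with strong metric subregularity keeping Lagrange multipliers under control, ending in a pointer to \cite{BM2024}), but it contains a gap that is structural rather than a matter of loose ends. You propose to build $B$ \emph{row by row}, producing for each $i$ a vector $p_i$ with $\|p_i\|\leq\kappa$ and $(e_i,-p_i)\in N_{\gph F}(\hat x,\hat y)$, and then assert that ``the bound $\|B\|\leq\kappa$ is a consequence of the row-wise bounds.'' It is not. If $B$ has rows $p_i^T$ with $\|p_i\|\leq\kappa$, the most one can conclude is $\|B\|\leq\sqrt{n}\,\kappa$ (take all $p_i=\kappa e_1$: then $\|Be_1\|=\sqrt{n}\,\kappa$). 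The operator-norm estimate $\|B\|\leq\kappa$ requires the rows to be produced \emph{jointly}, not independently. This is visible already in the linear test case $\gph F=\{(x,Mx)\}$: there $p_i=(M^{-T})e_i$, so $B=M^{-1}$ and $\|B\|=\|M^{-1}\|=\kappa$ exactly, and this equality only holds because the $p_i$ are the columns of a single matrix tied to the SMSR modulus, not $n$ unrelated vectors each of norm $\leq\kappa$. The argument in \cite{BM2024} exploits precisely this: the normal cone is shown to contain an $n$-dimensional subspace that is the graph of a linear map (the SCD structure), and the SMSR modulus bounds that map's operator norm.

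The multiplier-control step is also underdeveloped, and as written does not deliver a limit of the form $(e_i,-p_i)$. With the hard ball constraint $\ball[(\hat x,\hat y),t_k]$, the Fréchet normal at a boundary minimizer has the form $(e_i-\lambda_k(x_k-\hat x),\,-\lambda_k(y_k-\hat y))$ (your signs are flipped, but that is cosmetic). To recover $e_i$ in the first slot after the limit you need $\lambda_k(x_k-\hat x)\to 0$; however, SMSR gives $\|x_k-\hat x\|\leq\kappa\|y_k-\hat y\|$, so once you force $\lambda_k(y_k-\hat y)$ to converge to a nonzero limit $-p_i$, there is no reason for $\lambda_k(x_k-\hat x)$ to vanish; it is merely bounded by $\kappa\|p_i\|$. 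The statement that an unbounded multiplier ``would require the graph to concentrate along a direction incompatible with SMSR'' is an intuition, not an argument. A penalty in the $y$-variable only (rather than the full ball constraint), combined with a more careful estimate, is what actually closes this step in the reference. Since both you and the paper ultimately rest on \cite{BM2024}, the honest conclusion is that you have sketched the shape of the right argument but would need to rework both the norm bookkeeping and the limit passage to make it self-contained.
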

In order to apply the semismooth$^*$ path-following method effectively, it is essential to confirm that the set-valued mapping $G_t$ satisfies certain regularity conditions. These conditions include strong metric subregularity and semismooth$^*$ properties, which play a pivotal role in ensuring the convergence and stability of the algorithm. The following proposition establishes the existence of these properties for the set-valued mapping $G_t$ under the given assumptions.

Under the assumption of strong metric subregularity around the reference point, we demonstrate that the set $\mathcal{G}_{F, x(t)}^{\eta, \kappa}(u)$ is nonempty. This validation confirms the stability and robustness of solution paths, ensuring the regularity and feasibility of the solutions throughout the domain.
\begin{proposition}  Consider a set-valued mapping $F:  \mathbb{R}^n\tto \mathbb{R}^n$,  with a closed graph, a single-valued mapping $f: \mathbb{R}\times\mathbb{R}^n\longrightarrow \mathbb{R}^n$, a positive number  $T,$ and a mapping $x:[0,T] \longrightarrow \mathbb{R}^n$ satisfying  \eqref{eqInclusionParametric} with $F:=f+F$. Suppose that for each $t\in [0, T]$   the mapping $G_t$ is strongly metrically subregular around $(x(t),0)$.
	Then  there are $\kappa>0$, $\alpha>0$, and $\ell>0$ such that for each $t\in [0,T]$ and each $\eta\geq 1$ we have $\mathcal{G}_{f+F,x(t)}^{\eta, \sqrt{n(1+\kappa^2)}}(u)\neq \emptyset$  each $u \in \ball_{\mathbb{R}^n}[x(t),\alpha]$. 
\end{proposition}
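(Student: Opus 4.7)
The plan is to construct, for each $t \in [0,T]$ and each $u \in \ball_{\mathbb{R}^n}[x(t), \alpha]$, an explicit element of $\mathcal{G}_{f+F, x(t)}^{\eta, \sqrt{n(1+\kappa^2)}}(u)$ by choosing the approximation $(\hat{x}, \hat{y}) := (x(t), 0)$. This choice trivially satisfies the proximity condition $\Vert(\hat{x} - x(t), \hat{y})\Vert = 0 \leq \eta \Vert u - x(t)\Vert$ for every $\eta \geq 1$, so the problem is reduced to exhibiting, uniformly in $t$ and $u$, a pair $(A,B) \in \mathcal{A}_{\text{reg}}(f+F)(x(t), 0)$ whose operator-Frobenius product is bounded by $\sqrt{n(1+\kappa^2)}$.

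The key preliminary step is to upgrade the pointwise strong metric subregularity hypothesis into a uniform one along the path. Since the mapping $G_t = f(t,\cdot) + F$ is strongly metrically subregular around $(x(t), 0)$ for every $t \in [0,T]$, I would invoke Theorem \ref{thmStabilitySubreg}(iv) to produce a common constant $\kappa > 0$ and a common radius $\alpha > 0$ such that, for every $t \in [0,T]$, the mapping $G_t$ is strongly metrically subregular at $(x(t), 0)$ with constant $\kappa$ and neighbourhood $\ball_{\mathbb{R}^n}[x(t), \alpha]$. (Implicit here are continuity of $f$ and of the path $x(\cdot)$, together with the modulus condition (ii) of that theorem, which is automatic when $f$ is continuously differentiable.)

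With uniform subregularity in hand, I apply Proposition \ref{propMatricesExistence} at each point $(x(t), 0) \in \gph(f+F)$ with the uniform constant $\kappa$ to obtain a matrix $B_t \in \mathbb{R}^{n \times n}$ with $\Vert B_t\Vert \leq \kappa$ and $(I_n, B_t) \in \mathcal{A}_{\text{reg}}(f+F)(x(t), 0)$. Setting $A_t := I_n$ gives $\Vert A_t^{-1}\Vert = 1$, and the elementary bound $\Vert B_t\Vert_F \leq \sqrt{n}\,\Vert B_t\Vert \leq \sqrt{n}\,\kappa$ for $n\times n$ matrices yields
\begin{eqnarray*}
\Vert A_t^{-1}\Vert\cdot\Vert(A_t\mid B_t)\Vert_F = \sqrt{\Vert I_n\Vert_F^2 + \Vert B_t\Vert_F^2} \leq \sqrt{n + n\kappa^2} = \sqrt{n(1+\kappa^2)}.
\end{eqnarray*}
Hence $(x(t), 0, I_n, B_t) \in \mathcal{G}_{f+F, x(t)}^{\eta, \sqrt{n(1+\kappa^2)}}(u)$ for every $u \in \ball_{\mathbb{R}^n}[x(t), \alpha]$ and every $\eta \geq 1$, and any positive number may be chosen for the auxiliary $\ell$ that appears in the statement.

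The main obstacle is the first step: extracting a \emph{uniform} subregularity modulus from the merely pointwise hypothesis on $[0,T]$. This is precisely where compactness of $\{(t, x(t)) : t\in [0,T]\}$ (via continuity of $x(\cdot)$) and the parametric stability machinery of Theorem \ref{thmStabilitySubreg} do the real work. Once uniformity of $\kappa$ is secured, the rest is bookkeeping: the trivial choice $(\hat{x}, \hat{y}) = (x(t), 0)$ kills the proximity requirement, Proposition \ref{propMatricesExistence} supplies the block $(I_n, B_t)$, and the Frobenius-spectral comparison delivers the explicit constant $\sqrt{n(1+\kappa^2)}$.
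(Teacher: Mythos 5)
Your proof is correct and follows essentially the same route as the paper's: upgrade the pointwise strong metric subregularity along the path to a uniform one via the parametric stability machinery of Section 4 (Theorem \ref{thmStabilitySubreg}), then invoke Proposition \ref{propMatricesExistence} to produce the block $(I_n, B)$ with $\|B\| \leq \kappa$ and conclude with the Frobenius estimate $\|I_n^{-1}\|\,\|(I_n \mid B)\|_F \leq \sqrt{n(1+\kappa^2)}$. Your explicit choice $(\hat{x},\hat{y}) = (x(t),0)$ together with the ``at''-version (part (iv)) is a minor but genuine clean-up: the paper's written proof refers to an arbitrary $(\hat x,\hat y)$ inside a ball of radius $\eta\alpha$, which is not uniformly controlled in the free parameter $\eta$, whereas your trivial choice sidesteps that issue entirely; you also correctly flag that continuity of $f$ and $x(\cdot)$ and condition (ii) of Theorem \ref{thmStabilitySubreg} are implicitly needed, and that the constant $\ell$ in the statement plays no role.
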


\begin{proof}
	Theorem \ref{thmStabilitySubregularity}, with $(\varphi,p):=(x(\cdot),0)$, implies that there are $\kappa>0$ and $\alpha>0$ such that for each $t\in [0,T]$ the mapping $G_t$ is strongly metrically subregular around $(x(t),p(t))$ with the constant $\kappa$ and \nns $\ball_{\mathbb{R}^n}[x(t),\eta\alpha]$ and $\ball_{\mathbb{R}^n}[0,\eta\alpha]$.
	
Fix any $t\in [0,T]$ and any $\eta\geq 1$.  	Fix any $(\hat{x},\hat{y})\in\ball_{\mathbb{R}^n}[x(t),\eta\alpha]\times\ball_{\mathbb{R}^n}[0,\eta \alpha]\cap \gph\,G_t$, then, by Proposition \ref{propMatricesExistence}, with $F:=G_t$,   there is a matrix $B \in \mathbb{R}^{n \times n}$, with $\|B\| \leq \kappa$, such that $(I_n, B) \in \mathcal{A}_{\text{reg}}\, G_t(\hat{x}, \hat{y})$ and $e_i\in D^*G_t(\hat{x},\hat{{y}})((B)_i^T)$ for each $i=1,2,\dots, n$. Then  $\Vert(I_n,  B)\Vert_F\leq \sqrt{n (1+ \kappa^2)}$.

	Fix any $u \in\ball_{\mathbb{R}^n}[x(t), \alpha]$. Find  $(\hat{x},\hat{y})\in \gph\,G_t$ such that $\Vert(\hat{x}-x(t),\hat{y})\Vert\leq \eta\Vert u-x(t)\Vert\leq \eta\alpha.$
	Then, by the above, there is $B\in \mathbb{R}^{n\times n}$ such that $(I_n,B)\in \mathcal{A}_{\text{reg}}\, G_t(\hat{x},\hat{y})$  and $\Vert(I_n,  B)\Vert_F\leq \sqrt{n(1+\kappa^2)}$, therefore $\mathcal{G}_{f+F,x(t)}^{\eta, \sqrt{n(1+\kappa^2)}}(u)\neq \emptyset$.
\end{proof}

\section{Point-wise semismooth* path-following method}
In this section, we focus on the  semismooth$^*$  path-following method for the problem \eqref{eqInclusionParametric}.   This study is particularly focused on scenarios where $F$ lacks uniform semismooth$^*$ property.
 The algorithms need the following: a
point $ (t,x,y) \in \gph\,F $ and  sets $\mathcal{B}_{\text{reg}}\, F(t, {x},{y}) \subset \mathbb{R}^{n\times n}\times \mathbb{R}^n\times \mathbb{R}^{n\times n}$ and $\mathcal{Q}_{F, \bar{x}}^{\eta, \kappa}(t, x)\subset \mathbb{R}^n\times \mathbb{R}^n\times\mathbb{R}^{n\times n}\times \mathbb{R}^n\times\mathbb{R}^{n\times n}$ such that
$$
\mathcal{B}_{\text{reg}}\, F(t,{x},{y}):=\left\lbrace (A,V,B):  ( (B)_i^T,V_i, (A)_i^T)\in \gph\,D^* F(t, {x},{y}),  i \in \lbrace 1,2,\dots,n  \rbrace \text{ and } A^{-1}\text{ exists}\right\rbrace
$$	
and
\begin{eqnarray*}
	\mathcal{Q}_{F, \bar{x}}^{\eta, \kappa}(t,x):=\lbrace(\hat{x}, \hat{y}, A,V,B): \Vert(\hat{x}-\bar{x},\hat{y})\Vert \leq \eta \Vert x-\bar{x} \Vert, (A,V, B)\in\mathcal{B}_{\text{reg}}\, F(t,\hat{x},\hat{y}), \Vert A^{-1}\Vert \Vert (A\mid V\mid B)\Vert_F \leq \kappa \rbrace,
\end{eqnarray*}
where $\bx \in S(t) $, and  $ \eta$ and $\kappa $ are positive numbers.

 The motivation for this exploration is drawn from the work in \cite{LNK2018}, which provides a theoretical underpinning for handling semismoothness in dynamic systems.
 
To effectively tackle the challenges presented by the semismooth$^*$
path-following method under conditions of non-uniform semismoothness, it is essential to adapt and refine our analytical tools. For our purposes, we need to define sets similar to ones from the previous section but tailored to address the specificities and complexities of the non-uniform semismoothness$^*$ environment.

The version of Lemma \ref{lemMatrices} for this case reflects the nuanced understanding required when dealing with mappings that are semismooth$^*$, but lack of uniformity in their properties. 
\begin{lemma}
	\label{lemMatrices1}
	Assume for each $t\in [0,T]$  the mapping $F$ is semismooth$^*$ at $(t,x(t), 0)$.
	Then for each $\varepsilon>0$  and for each $t\in [0,T]$ there is $\beta>0$ such that for each $(s, u,y)\in\left(\ball_{\mathbb{R}}[t,\beta]\times\ball_{\mathbb{R}^n}[x(t),\beta]\times \ball_{\mathbb{R}^n}[0,\beta]\right)\cap \gph\,F$ and for each $(A, V, B)\in {\mathcal{B}_{\text{reg}}}\,F(s, {u},{y}) $ we have
	\begin{eqnarray*}
		\Vert (u-A^{-1}By+A^{-1}V(s-t))-x(t)\Vert \leq \varepsilon \Vert A^{-1}\Vert \Vert (A\,\arrowvert\,V\,\arrowvert\,B)\Vert_F 	\Vert(s, u,y) -(t, x(t),0)\Vert.
	\end{eqnarray*}
\end{lemma}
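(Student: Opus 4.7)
The plan is to mimic the argument for Lemma \ref{lemMatrices}, treating the time perturbation $s-t$ as an additional primal coordinate. First I would fix $\varepsilon>0$ and $t\in[0,T]$ and invoke semismoothness$^*$ of $F$ at $(t,x(t),0)$ to obtain $\beta>0$ such that for every $(s,u,y)\in\big(\ball_{\mathbb{R}}[t,\beta]\times\ball_{\mathbb{R}^n}[x(t),\beta]\times\ball_{\mathbb{R}^n}[0,\beta]\big)\cap\gph\,F$ and every $(y^*,\tau^*,x^*)\in\gph\,D^*F(s,u,y)$ one has
$$
\big|\tau^*(s-t)+\langle x^*,u-x(t)\rangle-\langle y^*,y\rangle\big|\leq \varepsilon\,\Vert(\tau^*,x^*,y^*)\Vert\,\Vert(s,u,y)-(t,x(t),0)\Vert.
$$
This uses the fact that the dual to the enlarged domain $\mathbb{R}\times\mathbb{R}^n$ of $F$ naturally splits into its scalar time component $\tau^*$ and its spatial component $x^*$.

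Next, I would fix such $(s,u,y)$ together with any $(A,V,B)\in\mathcal{B}_{\text{reg}}\,F(s,u,y)$. By the very definition of $\mathcal{B}_{\text{reg}}\,F$, for each row index $i\in\{1,2,\dots,n\}$ the triple $((B)_i^T,V_i,(A)_i^T)$ lies in $\gph\,D^*F(s,u,y)$. The key observation is that the scalar
$$
V_i(s-t)+\langle (A)_i^T,u-x(t)\rangle-\langle (B)_i^T,y\rangle
$$
is exactly the $i$-th component of the vector $A(u-x(t))+V(s-t)-By$. Applying the semismooth$^*$ inequality row-by-row, squaring, summing over $i$, and taking the square root, and using the identity $\sum_{i=1}^n\Vert((A)_i^T,V_i,(B)_i^T)\Vert^2=\Vert(A\mid V\mid B)\Vert_F^2$, delivers
$$
\Vert A(u-x(t))+V(s-t)-By\Vert\leq \varepsilon\,\Vert(A\mid V\mid B)\Vert_F\,\Vert(s,u,y)-(t,x(t),0)\Vert.
$$

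Finally, left-multiplication by $A^{-1}$ combined with the standard estimate $\Vert A^{-1}w\Vert\leq\Vert A^{-1}\Vert\Vert w\Vert$ produces the desired conclusion. I do not foresee any genuine obstacle: the argument is a direct row-wise adaptation of the proof of Lemma \ref{lemMatrices}, and the only subtle point is keeping the scalar time direction $\tau^*=V_i$ and the vector spatial direction $x^*=(A)_i^T$ correctly separated in the coderivative decomposition encoded by $\mathcal{B}_{\text{reg}}\,F$.
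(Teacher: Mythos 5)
Your argument is correct and is precisely the row-by-row adaptation that the paper has in mind: the paper simply cites \cite[Proposition 4.3]{GO2021} for this lemma, and your proof fills in the details by following the template of the proof of Lemma \ref{lemMatrices}, treating the time increment $s-t$ as one more primal coordinate with dual variable $\tau^*=V_i$. The only implicit step, that $\bigl(\sum_{i=1}^n\Vert((A)_i^T,V_i,(B)_i^T)\Vert^2\bigr)^{1/2}\leq\Vert(A\mid V\mid B)\Vert_F$, holds for either the max or the Euclidean product norm, exactly as in Lemma \ref{lemMatrices}.
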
	
\begin{proof}The proof comes directly from \cite[Proposition 4.3]{GO2021}.
\end{proof}

Now, we introduce the algorithm for semismooth$^*$ path-following method.
It begins with an initial setup close to a potential solution, progressively refining this starting point through calculated step. These steps are guided by the semismooth$^*$ characteristics of the mapping 
$F$, ensuring a robust adaptation to its behaviour. This approach provides a systematic method to tackle its variability and complexity.

\begin{algorithm}\label{algPathFollowing}
	\begin{enumerate}
			\item 	Set $t\in [0,T)$, $s\in (t,T]$, and   $\bx\in\mathbb{R}^n$.
			\item If $0\in F(s, \bx)$, then set $\bar{u}:=\bx$ and end the algorithm. 
			\item \textbf{Approximation step}: Compute $(\hat{x},\hat{y})$ such that
			$(s, \hat{x},\hat{y})\in \gph\,F$
			such that  
			\begin{eqnarray*}
					{\mathcal{B}_{\text{reg}}}\, F (s, \hat{x},\hat{y})\neq \emptyset\quad\text{and}\quad	\Vert (\hat{x},\hat{y})-( x(s),0)\Vert\leq \eta \Vert \bx-x(s)\Vert.
			\end{eqnarray*}
			\item \textbf{Newton step:} Select  $(A,V, B)\in \mathcal{B}_{\text{reg}}\,F(s, \hat{x},\hat{y})$, $A$ and $B$ are $n\times n$ matrices and compute  $\bar{u}:=\hat{x}-A^{-1}B\hat{y}+A^{-1} V (s-t)$.
		\end{enumerate}
\end{algorithm}

Having delineated the steps of the semismooth$^*$  path-following algorithm, we now transition to a rigorous validation of this approach through a formal theorem. This theorem provides sufficient conditions guaranteeing the error estimate under the conditions specified by the semismooth$^*$ properties of the set-valued mapping $F$.

\begin{theorem}\label{thmSemismoothMethod} Consider a set-valued mapping $F :\mathbb{R}\times \mathbb{R}^n\tto \mathbb{R}^n$,  with a closed graph, positive numbers  $T, \eta, \ell,$ and $\kappa$ and a mapping $x:[0,T] \longrightarrow \mathbb{R}^n$. Suppose that $x(\cdot)$ is Lipschitz continuous on $[0,T]$ with the constant $\ell$ and satisfies \eqref{eqInclusionParametric}.
	Moreover, assume that  there is $\alpha>0$ such that for  each $t\in [0,T]$ we have $\mathcal{Q}_{F, x(t)}^{\eta, \kappa}(t,u)\neq \emptyset$ for each $u \in \ball_{\mathbb{R}^n}[x(t),\alpha]$ with $u\notin
	 S(t)$. Suppose  that there are $t\in [0,T)$ and $\beta>0$ such that for each $( s, u,y)\in\left(\ball_{\mathbb{R}}[t,\beta]\times\ball_{\mathbb{R}^n}[x(t),\beta]\times \ball_{\mathbb{R}^n}[0,\beta]\right)\cap \gph\,F$ and for each $(A,V,B)\in {\mathcal{B}_{\text{reg}}}\,F(s,{u},{y}) $ we have
	 \begin{eqnarray*}
	 	\Vert u-A^{-1}By+A^{-1}V(s-t)-x(t)\Vert \leq 1/(2\eta\kappa) \Vert A^{-1}\Vert \Vert (A\,\arrowvert\,V \arrowvert\ B)\Vert_F 	\Vert(s,u,y) -(t, x(t),0)\Vert.
	 \end{eqnarray*}  Suppose that we have $\bx\in \mathbb{R}^n$ such that $a:=\Vert \bx-x(t)\Vert <\min\lbrace \alpha,\beta/\eta\rbrace$. Then for each $s\in (t,  \min\lbrace T,t+\beta\rbrace]$ such that $\eta a+(\eta+1) \ell (s-t)\leq\beta$ and $a+\ell(s-t)\leq\alpha$ the point $\bar{u}\in\mathbb{R}^n$, generated by  Algorithm \ref{algPathFollowing},  satisfies
	 	\begin{eqnarray*}
	 	\Vert \bar{u}-x(s)\Vert \leq 1/2\max\lbrace \Vert \bx-x({t})\Vert,s-t\rbrace+ \ell  (s-t).
	 \end{eqnarray*}	
\end{theorem}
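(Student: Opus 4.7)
The plan is to execute one iteration of Algorithm \ref{algPathFollowing} from the data $(t,s,\bx)$, verify that the produced triple $(s,\hat{x},\hat{y})$ lies in the $\beta$-region on which the semismoothness-type hypothesis holds, apply that hypothesis to control $\|\bar{u}-x(t)\|$, and finally shift to $x(s)$ using Lipschitz continuity of $x(\cdot)$. Throughout set $\tau:=s-t$.

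First, I would check that the approximation step is executable. The estimate $\|\bx-x(s)\|\leq\|\bx-x(t)\|+\ell\tau\leq a+\ell\tau\leq\alpha$ places $\bx$ in $\ball_{\mathbb{R}^n}[x(s),\alpha]$. If $\bx\in S(s)$, the conclusion is immediate by taking $\bar{u}:=\bx$, so we may assume $\bx\notin S(s)$. The nonemptiness hypothesis $\mathcal{Q}_{F,x(s)}^{\eta,\kappa}(s,\bx)\neq\emptyset$ then guarantees that the approximation step yields $(s,\hat{x},\hat{y})\in\gph\,F$ together with $(A,V,B)\in\mathcal{B}_{\text{reg}}\,F(s,\hat{x},\hat{y})$ satisfying $\|(\hat{x},\hat{y})-(x(s),0)\|\leq\eta\|\bx-x(s)\|$ and $\|A^{-1}\|\,\|(A\mid V\mid B)\|_F\leq\kappa$.

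Second, I would verify that $(s,\hat{x},\hat{y})$ lies in the $\beta$-neighborhood of $(t,x(t),0)$ so that the displayed hypothesis becomes applicable. The three coordinate bounds $|s-t|=\tau\leq\beta$,
$$\|\hat{x}-x(t)\|\leq\|\hat{x}-x(s)\|+\ell\tau\leq\eta(a+\ell\tau)+\ell\tau=\eta a+(\eta+1)\ell\tau\leq\beta,$$
and $\|\hat{y}\|\leq\eta(a+\ell\tau)\leq\beta$ follow from $s\leq t+\beta$ together with the standing hypothesis $\eta a+(\eta+1)\ell\tau\leq\beta$. Plugging the bound $\|A^{-1}\|\,\|(A\mid V\mid B)\|_F\leq\kappa$ and the Newton-step identity $\bar{u}=\hat{x}-A^{-1}B\hat{y}+A^{-1}V(s-t)$ into the hypothesis yields
$$\|\bar{u}-x(t)\|\leq\frac{1}{2\eta}\max\{\tau,\|\hat{x}-x(t)\|,\|\hat{y}\|\}.$$

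Third, I would bound the maximum on the right by peeling off the $\ell\tau$ shift: using the elementary inequality $\max\{X,Y+C,Z\}\leq\max\{X,Y,Z\}+C$ for $C\geq 0$ together with $\|\hat{x}-x(s)\|,\|\hat{y}\|\leq\eta\|\bx-x(s)\|\leq\eta(a+\ell\tau)$, the maximum is dominated by $\eta\max\{a,\tau\}$ plus a multiple of $\ell\tau$. Dividing by $2\eta$ and combining with the triangle inequality $\|\bar{u}-x(s)\|\leq\|\bar{u}-x(t)\|+\ell\tau$ produces the claimed estimate $\tfrac{1}{2}\max\{a,\tau\}+\ell(s-t)$. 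The main obstacle is precisely this last bookkeeping: the three entries of the maximum live on different scales (the time step $\tau$, the oracle quality $\eta\|\bx-x(s)\|$, and the Lipschitz drift $\ell\tau$), and some care is required to separate the $\ell\tau$ contributions so as to land exactly on the stated right-hand side rather than on a looser multiple of $\ell\tau$.
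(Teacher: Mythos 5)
Your proposal follows the same route as the paper's own proof: execute one Newton-type step, verify the executability of the approximation step via $\|\bx - x(s)\| \le a + \ell\tau \le \alpha$, check that $(s,\hat x,\hat y)$ lands in the $\beta$-ball around $(t,x(t),0)$ via $\eta a + (\eta+1)\ell\tau\le\beta$, invoke the semismooth-type inequality together with $\|A^{-1}\|\,\|(A\mid V\mid B)\|_F\le\kappa$, and close with the Lipschitz shift $\|\bar u - x(s)\|\le\|\bar u - x(t)\|+\ell\tau$. The concern you flag in the third paragraph is well-founded rather than a mere bookkeeping annoyance: since $\|\hat x - x(t)\|$ can only be controlled by $\eta\|\bx - x(s)\| + \ell\tau \le \eta a + (\eta+1)\ell\tau$, one obtains $\|\bar u - x(t)\| \le \tfrac{1}{2}\max\{a,\tau\} + \tfrac{\eta+1}{2\eta}\,\ell\tau$, and after the final triangle inequality the $\ell\tau$-coefficient is $1 + \tfrac{\eta+1}{2\eta} > 1$, not $1$. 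The paper's own proof glosses over exactly this point (and also contains an apparent typo, writing $\varepsilon\eta\|A^{-1}\|\|(A\mid V\mid B)\|_F$ where the chain as set up should produce $\varepsilon\|A^{-1}\|\|(A\mid V\mid B)\|_F$ and hence the factor $1/(2\eta)$ that you correctly derive). So your argument is not missing anything that the paper supplies; rather, both arguments establish the estimate $\|\bar u - x(s)\| \le \tfrac{1}{2}\max\{a, s-t\} + C\ell(s-t)$ with a constant $C$ that is a bounded multiple of $\ell(s-t)$ but not literally $1$ unless additional slack is absorbed.
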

\begin{proof}  Let $\varepsilon:=1/(2\eta\kappa)$.
Fix any $s\in (t,  \min\lbrace T,t+\beta\rbrace]$ such that $\eta a+\ell(\eta+1)  (s-t)\leq\beta$ and $a+\ell(s-t)\leq\alpha$.  If $0\in F(s, \bx)$, then we finished.  Otherwise,
\begin{eqnarray}
	\Vert \bx-x(s)\Vert \leq \Vert \bx-x(t)\Vert +\Vert x(t)-x(s)\Vert\leq a+\ell(s-t)\leq\alpha;
\end{eqnarray}
hence  $\mathcal{Q}_{F,  x(s)}^{\eta, \kappa}(s, \bx)$ is nonempty. Then fix any $( \hat{x},\hat{y},A,V,  B) \in \mathcal{Q}_{F,  x(s)}^{\eta, \kappa}(s, \bx)$ and then we have $(s,\hat{x},\hat{y})\in\left(\ball_{\mathbb{R}}[t,\beta]\times\ball_{\mathbb{R}^n}[x(t),\beta]\times \ball_{\mathbb{R}^n}[0,\beta]\right)\cap \gph\,F$ since
	\begin{eqnarray*}
	&&\Vert (\hat{x},\hat{y})-( x(t),0)\Vert \leq \Vert (\hat{x},\hat{y})-( x(s),0)\Vert +\Vert x(t)-x(s)\Vert\leq \eta \Vert \bx-x(s)\Vert+\ell (s-t)\\
	&\leq&  \eta \Vert \bx-x(t)\Vert+ \eta \Vert x(t)-x(s)\Vert+\ell (s-t)\leq \eta\Vert \bx-x(t)\Vert+\ell(\eta+1) (s-t)\leq\beta.
\end{eqnarray*}
  Let $\bar{u}:=\hat{x}-A^{-1}
B\hat{y}+A^{-1}V(s-t)$, then
			\begin{eqnarray*}
			&&\Vert \bar{u}-x(s)\Vert=\Vert \hat{x}-A^{-1}
		B\hat{y}+A^{-1}V(s-t)-x(s)\Vert\leq \Vert \hat{x}-A^{-1}
		B\hat{y}+A^{-1}V(s-t)-x(t)\Vert\\
		& +&\Vert x(s)-x(t) \Vert\leq\varepsilon \eta\Vert A^{-1}\Vert \Vert (A\,\arrowvert\,V\arrowvert\, B)\Vert_F \Vert (s,\hat{x},\hat{y}) -(t, x(t),0)\Vert+\ell (s-t)\\
			&\leq &1/2\max\lbrace \Vert \bx-x({t})\Vert,s-t\rbrace+\ell(s-t).
			\end{eqnarray*}
	\end{proof}

Applying the idea for global convergence of the SCD Newton method from \cite{GOV2022}, we suggest the semismooth$^*$ path-following method which guarantees an estimated grid error on a non-uniform grid.

For this purpose, we assume that we have a method which is globally convergent in the sense that for each $t \in [0, T]$, it generates from an arbitrary starting point $u_0$ a sequence $(u_i)$ such that at least one accumulation point of $(u_i)$ satisfies \eqref{eqInclusionParametric} for that $t$. We suppose that this method is formally given by some mapping $\mathcal{T} : \mathbb{R} \times \mathbb{R}^n \rightarrow \mathbb{R}^n$, which computes from the iterate $u_i$ the next iterate by
\begin{eqnarray}
	\label{eqIteration}
u_{i+1} = \mathcal{T}(t, u_i).
\end{eqnarray}
Note that  $\mathcal{T}$ must depend on the problem \eqref{eqInclusionParametric}. For particular choices of $\mathcal{T}$ see \cite[Section 6.2]{GOV2022}.

\begin{algorithm}
	\label{algSemismoothWholeT}
	\begin{enumerate}
		\item Set $t_0:=0$, $x_0$ close to $x(t_0)$, and $k:=0$.
		\item Choose $\varepsilon>0$ and set $u_0:=x_k$ and $i:=0$. 
		If $\Vert u_0 - x(t_k) \Vert < \varepsilon$, go to 5.
		\item Compute
		\begin{eqnarray*}
			u_{i+1} := \mathcal{T}(t_k, u_i).
		\end{eqnarray*}
		\item If $\Vert u_{i+1} - x(t_k) \Vert > \varepsilon$, then $i := i + 1$ and go to 3.
		
		\item If $\Vert u_0 - x(t_k) \Vert < \varepsilon$, set $x_k := u_0$; otherwise set $x_k := u_{i+1}$. Choose $t_{k+1} \in (t_k, T]$.
		\item Using Algorithm \ref{algPathFollowing}, with $\bx:=x_{k}, t:=t_k$ and $s:=t_{k+1}$, get $ x_{k+1}:=\bar{u}.$ If $t_{k+1} = T$, then end the algorithm; otherwise set $k := k + 1$ and go to 2.
	\end{enumerate}
\end{algorithm}
\begin{theorem} Consider a set-valued mapping $F :\mathbb{R}\times \mathbb{R}^n\tto \mathbb{R}^n$, with a closed graph, positive numbers  $T, \eta, \ell,$ and $\kappa$ and a mapping $x:[0,T] \longrightarrow \mathbb{R}^n$. Suppose that $x(\cdot)$ is Lipschitz continuous on $[0,T]$ with the constant $\ell$ and satisfies \eqref{eqInclusionParametric}. Further, consider a single-valued mapping $\mathcal{T}:\mathbb{R}\times \mathbb{R}^n \longrightarrow \mathbb{R}^n$ such that for each $t\in [0,T]$  the sequence $(u_k)$, generated by \eqref{eqIteration} for any initial point $u_0 \in\mathbb{R}^n$, converges to $x(t)$.
	Moreover, assume that there is $\alpha>0$  such that for each $t\in [0,T]$ we have $\mathcal{Q}_{F, x(t)}^{\eta, \kappa}(t, u)\neq \emptyset$ for each $
	u \in \ball_{\mathbb{R}^n}[x(t),\alpha]$ with $u\notin
	S(t)$ and  $F$ is semismooth$^*$ at $(t,x(t),0)$. Then, for $t_0 = 0$ and $x_0 := x(0)$, there are neither an increasing sequence $(t_k)$ in $[0, T]$ nor finitely many numbers $t_0 < t_1 < t_2< \dots < t_{N-1} < t_N = T$, and a sequence $(x_k)$ or finitely many points $x_0, x_1, x_2, \dots, x_{N-1}, x_N$, both generated by Algorithm \ref{algSemismoothWholeT}, such that
	\begin{eqnarray}
\label{eqErrorGridEstimate}
	\Vert x_{k+1} - x(t_{k+1}) \Vert \leq \frac{1}{2} \max\lbrace\Vert x_k - x(t_k) \Vert, t_{k+1} - t_k\rbrace +  \ell (t_{k+1} - t_k)
\end{eqnarray}
	for each $k \in \mathbb{N}_0$ or each $k \in \{ 0, 1, 2, \dots, N-2, N-1 \}$.
\end{theorem}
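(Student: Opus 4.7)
\medskip

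\noindent\textbf{Proof plan.} The plan is to proceed by induction on $k$, executing Algorithm \ref{algSemismoothWholeT} step by step and invoking Theorem \ref{thmSemismoothMethod} at each stage to obtain \eqref{eqErrorGridEstimate}. The base case is immediate: $t_0 = 0$ and $x_0 := x(0)$ give $\|x_0 - x(t_0)\| = 0$. At step $k$, assume $t_k < T$ and that some $x_k \in \mathbb{R}^n$ has already been produced; no a priori bound on $\|x_k - x(t_k)\|$ is needed, because the $\mathcal{T}$-iteration is assumed to converge to $x(t_k)$ from an arbitrary initial point.

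The key is to set up the parameters so that Theorem \ref{thmSemismoothMethod} applies at time $t_k$. Since $F$ is semismooth$^*$ at $(t_k, x(t_k), 0)$, Lemma \ref{lemMatrices1} applied with $\varepsilon_0 := 1/(2\eta\kappa)$ yields a radius $\beta_k > 0$ realizing the key inequality in the hypothesis of Theorem \ref{thmSemismoothMethod}. One then selects $\varepsilon \in (0, \min\{\alpha, \beta_k/\eta\})$ in Step 2 of Algorithm \ref{algSemismoothWholeT}. By the global convergence assumption on $\mathcal{T}$, Steps 3 and 4 terminate after finitely many iterations, and Step 5 updates $x_k$ to a point with $a_k := \|x_k - x(t_k)\| < \varepsilon < \min\{\alpha, \beta_k/\eta\}$. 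Next, one chooses $t_{k+1} \in (t_k, \min\{T, t_k + \beta_k\}]$ small enough that $\eta a_k + (\eta+1)\ell (t_{k+1} - t_k) \leq \beta_k$ and $a_k + \ell (t_{k+1} - t_k) \leq \alpha$; both constraints admit strictly positive slack because $\beta_k - \eta a_k > 0$ and $\alpha - a_k > 0$. With $t := t_k$, $s := t_{k+1}$, and $\bx := x_k$, all hypotheses of Theorem \ref{thmSemismoothMethod} are in force, so running Algorithm \ref{algPathFollowing} inside Step 6 of Algorithm \ref{algSemismoothWholeT} returns $x_{k+1}$ satisfying exactly \eqref{eqErrorGridEstimate}.

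The induction terminates in finitely many iterations with $t_N = T$ if the choice $t_{k+1} = T$ is ever admissible by the two constraints above, producing the finite grid $t_0 < t_1 < \cdots < t_N = T$ and points $x_0, x_1, \dots, x_N$; otherwise the construction continues indefinitely and produces an infinite strictly increasing sequence $(t_k) \subset [0, T]$ together with the sequence $(x_k)$. In either case \eqref{eqErrorGridEstimate} holds at every completed index by Theorem \ref{thmSemismoothMethod}. The principal obstacle is the feasibility check at each step, i.e.\ guaranteeing strictly positive progress $t_{k+1} > t_k$; this is handled precisely by arranging the strict inequality $a_k < \min\{\alpha, \beta_k/\eta\}$ via the choice of $\varepsilon$ in Step 2, which leaves slack in both constraints. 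Note that no uniform-in-$k$ lower bound on $t_{k+1} - t_k$ is asserted, so the sequence $(t_k)$ may accumulate strictly below $T$; the statement of the theorem only claims existence of such a sequence together with the per-step estimate \eqref{eqErrorGridEstimate}, and this is exactly what the inductive construction delivers.
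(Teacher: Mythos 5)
Your proposal is correct and takes essentially the same route as the paper: apply Lemma \ref{lemMatrices1} at $t_k$ with $\varepsilon_0 = 1/(2\eta\kappa)$ to obtain a radius $\beta_k$, set the tolerance in Step 2 to (at most) $\min\{\alpha,\beta_k/\eta\}$, use the global $\mathcal{T}$-iteration to force $\|x_k-x(t_k)\|$ below that tolerance, choose $t_{k+1}$ so that the two smallness constraints of Theorem \ref{thmSemismoothMethod} hold with positive slack, and invoke Theorem \ref{thmSemismoothMethod} to get \eqref{eqErrorGridEstimate}. Your explicit treatment of the base case, the feasibility check that guarantees $t_{k+1}>t_k$, and the dichotomy between the finite grid reaching $T$ versus an infinite increasing sequence possibly accumulating below $T$ are all consistent with and slightly cleaner than the paper's exposition.
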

\begin{proof} Denote \(\varepsilon := \frac{1}{2\eta \kappa}\).Assume that, for some $k\in\mathbb{N}$, we have $t_k\in [0,T]$ and $x_k\in \mathbb{R}^n$  such that
	$$
\Vert x_{k} - x(t_{k}) \Vert \leq \frac{1}{2} \Vert x_{k-1} - x(t_{k-1}) \Vert +  \ell (t_{k} - t_{k-1}).
$$
By Lemma \ref{lemMatrices1}, there exists \(\beta > 0\) such that
  $(s,u,y)\in\left(\ball_{\mathbb{R}}[t_k,\beta]\times\ball_{\mathbb{R}^n}[x(t_k),\beta]\times \ball_{\mathbb{R}^n}[0,\beta]\right)\cap \gph\,F$ we have that for each $(A,V,B)\in \mathcal{B}_{\text{reg}}\,F({t_k},{u},{y}) $ we have
\begin{eqnarray*}
	\Vert u-A^{-1}By-x(t_k)\Vert \leq \varepsilon \Vert A^{-1}\Vert \Vert (A\,\arrowvert\,V \,\arrowvert\,B)\Vert_F 	\Vert( s, u,y) -(t_k, x(t_k),0)\Vert.
\end{eqnarray*}
In Algorithm \ref{algSemismoothWholeT}, we choose $\varepsilon:=\min\lbrace \alpha,\beta/\eta\rbrace$.
If  $a:=\Vert x_{k}-x(t_k)\Vert <\min\lbrace \alpha,\beta/\eta\rbrace$, then  choice  $t_{k+1}\in (t_k,  \min\lbrace T,t_k+\beta\rbrace]$ such that $\eta a+(\eta+1) \ell (t_{k+1}-t_k)\leq\beta$ and $a+\ell(t_{k+1}-t_k)\leq \alpha$. By Theorem \ref{thmSemismoothMethod}, with $t:=t_k$, $s:=t_{k+1},$ and $\bx:=x_k$, there is $x_{k+1}\in\mathbb{R}^n$, generated by Algorithm \ref{algPathFollowing}, such that \eqref{eqErrorGridEstimate} holds.

 If $\Vert x_{k}-x(t_k)\Vert \geq\min\lbrace \alpha,\beta/\eta\rbrace$, compute $u_{i+1}\in\mathbb{R}^n$ by the iterate \eqref{eqIteration} with the initial point $ u_0:=x_{k}$, such that $\Vert u_{i+1}-x(t_k)\Vert <\min\lbrace \alpha,\beta/\eta\rbrace$.  Redefine $x_k:=u_{i+1}$ and apply again Theorem \ref{thmSemismoothMethod} with $t:=t_k$, $s:=t_{k+1}$, and $\bx:=x_k$.
\end{proof}

Strong metric subregularity around each point of continuous path guarantees nonemptyness of the set $\mathcal{Q}_{F, x(t)}^{\eta, \kappa}(t,u)$ for each $t\in[0,T]$ and each $u\in \ball_{\mathbb{R}^n}[x(t),\alpha]$ for some $\alpha>0$.
\begin{proposition}\label{propExistenceMatrices}  Consider a set-valued mapping $G: \mathbb{R}^n\tto \mathbb{R}^n$, with a closed graph, a continuously differentiable single-valued mapping $p: \mathbb{R}\longrightarrow \mathbb{R}^n$ with Lipschitz constant $\ell$, positive constants  $T, \kappa, \eta$, and $\alpha$ and a mapping $x:[0,T] \longrightarrow \mathbb{R}^n$ satisfying  \eqref{eqInclusionParametric} with $F:=G-p$. Suppose that for each $t\in [0, T]$ the mapping $G$ is strongly metrically subregular around $(x(t),p(t))$.
	
	Then  there are $\kappa>0$ and $\alpha>0$ such that for each $t\in [0,T]$  we have $\mathcal{Q}_{G-p, x(t)}^{\eta, \sqrt{n(1+\ell^2\kappa^2+\kappa^2)}}(t,u)\neq \emptyset$ for  each $u \in \ball_{\mathbb{R}^n}[x(t),\alpha]$ with $u\notin S(t)$. 
\end{proposition}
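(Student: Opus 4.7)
The plan is to upgrade the pointwise strong metric subregularity of $G$ to a uniform version along the trajectory $x(\cdot)$, to translate the relevant coderivative fact from $G$ to $F := G - p$ via the smooth-perturbation sum rule, and then to select a trivial base point at which Proposition~\ref{propMatricesExistence} can be applied. First I would invoke Theorem~\ref{thmStabilitySubreg} with $f \equiv 0$, $\varphi := x(\cdot)$ and the given $p(\cdot)$: hypothesis~(ii) holds trivially and~(i) is the standing assumption, so the theorem yields $\kappa' > 0$ and $\alpha > 0$ such that $G$ is strongly metrically subregular at $(x(t), p(t))$ with constant $\kappa'$ and neighbourhood $\ball_{\mathbb{R}^n}[x(t),\alpha]$ for every $t \in [0,T]$.

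Next, since $p$ is continuously differentiable and depends only on $t$, the coderivative sum rule for $F(t,x) = G(x) - p(t)$ yields
\begin{eqnarray*}
(y^*,(t^*,x^*)) \in \gph\, D^*F(t,x,y) &\Longleftrightarrow& (y^*,x^*) \in \gph\, D^*G(x, y + p(t)) \text{ and } t^* = -\langle \dot{p}(t), y^*\rangle.
\end{eqnarray*}
Hence, whenever $(I_n, B) \in \mathcal{A}_{\text{reg}}\, G(\hat x, \hat y + p(t))$ with $e_i \in D^*G(\hat x, \hat y + p(t))\big((B)_i^T\big)$ for each $i$, the vector $V := -B\dot{p}(t) \in \mathbb{R}^n$ satisfies $(I_n, V, B) \in \mathcal{B}_{\text{reg}}\, F(t, \hat x, \hat y)$. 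I would then choose the trivial base point $\hat x := x(t)$, $\hat y := 0$, which lies in $\gph\, F(t,\cdot)$ because $0 \in F(t, x(t))$, and automatically fulfils $\Vert(\hat x - x(t), \hat y)\Vert = 0 \leq \eta \Vert u - x(t)\Vert$. Proposition~\ref{propMatricesExistence} applied to $G$ at $(x(t), p(t))$ then supplies a matrix $B$ with $\Vert B\Vert \leq \kappa'$, $(I_n, B) \in \mathcal{A}_{\text{reg}}\, G(x(t), p(t))$, and $e_i \in D^*G(x(t), p(t))((B)_i^T)$, so that $(A, V, B) := (I_n, -B\dot{p}(t), B) \in \mathcal{B}_{\text{reg}}\, F(t, x(t), 0)$.

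What remains is a routine norm bound. Clearly $\Vert A^{-1}\Vert = 1$, $\Vert A\Vert_F^2 = n$, and $\Vert B\Vert_F^2 \leq n\Vert B\Vert^2 \leq n(\kappa')^2$. For $V$, Cauchy--Schwarz gives $|V_i| \leq \Vert \dot{p}(t)\Vert\,\Vert(B)_i^T\Vert \leq \ell\Vert(B)_i\Vert$, so $\Vert V\Vert^2 \leq \ell^2\Vert B\Vert_F^2 \leq \ell^2 n (\kappa')^2$. Summing yields
\begin{eqnarray*}
\Vert A^{-1}\Vert\,\Vert(A\mid V\mid B)\Vert_F &\leq& \sqrt{n(1 + \ell^2(\kappa')^2 + (\kappa')^2)},
\end{eqnarray*}
which is the asserted bound (with $\kappa$ renamed as $\kappa'$). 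The principal technical step is the coderivative calculation for $F = G - p$ and its translation into the concrete description of $\mathcal{B}_{\text{reg}}$; the uniformization and the Frobenius-norm bookkeeping are straightforward once that is in place.
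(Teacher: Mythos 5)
Your proof is correct and follows essentially the same route as the paper: uniformize the strong metric subregularity of $G$ along the trajectory via Theorem~\ref{thmStabilitySubreg}, apply Proposition~\ref{propMatricesExistence} to obtain $(I_n,B)$ with $\Vert B\Vert\leq\kappa'$, and then bound the Frobenius norm. Your version is in fact slightly tighter on one detail: you correctly derive from the coderivative chain rule for $F=G-p$ that the middle block of $\mathcal{B}_{\text{reg}}$ must be $V=-B\dot p(t)$ (so $\Vert V\Vert^2\leq\ell^2\Vert B\Vert_F^2\leq n\ell^2(\kappa')^2$), which is exactly what is needed to recover the stated constant $\sqrt{n(1+\ell^2\kappa^2+\kappa^2)}$, whereas the paper writes $\dot p(t)$ in that slot, a small slip that would instead give $\sqrt{n+\ell^2+n\kappa^2}$.
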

\begin{proof} Theorem \ref{thmStabilitySubregularity}, with $F:=G$ and $\varphi:=x$, implies that there are $\kappa>0$ and $\alpha>0$ such that for each $t\in [0,T]$ the mapping $G$ is strongly metrically subregular around $(x(t),p(t))$ with the constant $\kappa$ and \nns $\ball_{\mathbb{R}^n}[x(t),\eta\alpha]$ and $\ball_{\mathbb{R}^n}[p(t),\eta\alpha]$.
	
	Thus fix any $t\in [0,T]$  and any $u\in  \ball_{\mathbb{R}^n}[x(t),\alpha]$ with  $u \notin S(t)$. Then there is $(\hat{x},\hat{y})\in \gph\,(G-p(t))$ such that $\Vert (\hat{x}-x(t),\hat{y})\Vert\leq \eta\Vert u-x(t)\Vert\leq \eta\alpha$. Since $G$ is strongly metrically regular at $(\hat{x},\hat{y})$ with the constant $\kappa$, by Proposition \ref{propMatricesExistence}, with $F:=G$,   there is a matrix $B \in \mathbb{R}^{n \times n}$ with $\|B\| \leq \kappa$ such that $(I_n, B) \in \mathcal{A}_{\text{reg}}\, G(\hat{x}, \hat{y})$. Thus $\Vert(I_n\mid \dot{p}(t) \mid B)\Vert_F\leq \sqrt{n(1+\ell^2 \kappa^2+\kappa^2)}$, therefore $\mathcal{Q}_{F,x(t)}^{\eta, \sqrt{n(1+\ell^2\kappa^2+\kappa^2)}}(t,u)\neq \emptyset$.
\end{proof}
\section{Numerical Implementation of Semismooth$^*$ Path-Following Methods}

In this section, we present a numerical implementation of semismooth$^*$ path-following methods introduced in this paper for  elementary problems arising in electric circuits. We consider a model represented by parametric generalized equations
\begin{equation}	\label{eqAplInclusion}
	0 \in g( x(t))-p(t) + F(x(t)) \quad \text{for} \quad t \in [0, T],
\end{equation}
where $ g: \mathbb{R}^n \longrightarrow \mathbb{R}^n$ is continuously differentiable single-valued mapping,  $p: [0,T]\longrightarrow\mathbb{R}^n$ is a single-valued mapping, and $ F: \mathbb{R}^n \rightrightarrows \mathbb{R}^n $ is a set-valued mapping with a closed graph.

For a fixed $ t \in [0, T] $, we reformulate the problem, following the framework of \cite{GOV2022}, as

\begin{equation*}
	p(t) \in H(x(t), d(t)) := \left( \begin{array}{c} g(x(t)) + F(d(t)) \\ x(t) - d(t) \end{array} \right),
\end{equation*}

where $ H: \mathbb{R}^{2n} \rightrightarrows \mathbb{R}^{2n} $.

This reformulation simplifies the computation of the approximation step in the algorithms. We first need to define the selection of a set-valued mapping.

We say that $ F:\mathbb{R}^n \tto \mathbb{R}^n $ has a \emph{selection} around $ \bar{x} $ for $ \bar{y} $ if there exists a single-valued mapping $ s $ defined on a \nn $ U $ of $ \bar{x} $ such that $ s(\bar{x}) = \bar{y} $ and $ s(x) \in F(x) $ for every $ x \in U $.

\begin{proposition}
	\label{propApprox} 
	Let $ x(\cdot) $ be a solution of \eqref{eqAplInclusion} and assume that there exists some $ \lambda > 0 $ such that for each $ t \in [0, T] $, the mapping $ (I_n + \lambda F)^{-1} $ has a single-valued Lipschitz continuous selection $ s_t $ around $ x(t) - \lambda g(x(t))+\lambda p(t) $ for $ x(t) $. Then for each $ t \in [0, T] $, there exist $ \delta > 0 $ and $ \eta > 0 $ such that if $ x \in \ball_{\mathbb{R}^n}[x(t), \delta] $, the vectors $ \hat{d} := s_t(x - \lambda g(x)+\lambda p(t)) $ and $ \hat{y} := \left( \frac{1}{\lambda}(x - \hat{d}), x - \hat{d} \right) \in H(x, \hat{d}) $ satisfy the estimate
	
	\begin{equation*}
		\| (x, \hat{d}, \hat{y}) - (x(t), x(t), 0) \| \leq \eta \| x - x(t) \|.
	\end{equation*}
\end{proposition}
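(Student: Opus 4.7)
The plan is as follows. I would first verify the membership $\hat{y} \in H(x, \hat{d})$ by unwinding the resolvent-type definition: from $\hat{d} \in (I_n + \lambda F)^{-1}(x - \lambda g(x) + \lambda p(t))$ one obtains $x - \lambda g(x) + \lambda p(t) \in \hat{d} + \lambda F(\hat{d})$, whence $\tfrac{1}{\lambda}(x - \hat{d}) - g(x) + p(t) \in F(\hat{d})$, which is exactly what is needed for the first component of the reformulated $H$; the second component $x - \hat{d}$ is tautological. A key observation that will be used repeatedly is $s_t(x(t) - \lambda g(x(t)) + \lambda p(t)) = x(t)$, which follows from $0 \in g(x(t)) - p(t) + F(x(t))$ together with the assumption that $s_t$ is the specified single-valued selection taking the value $x(t)$ at this reference input.

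Next, I would fix $t \in [0,T]$, let $\ell_s$ be the Lipschitz constant of $s_t$ on a neighborhood $U$ of $x(t) - \lambda g(x(t)) + \lambda p(t)$, and let $\ell_g$ be a local Lipschitz constant of $g$ on some ball $\ball_{\mathbb{R}^n}[x(t), \rho]$, which exists since $g$ is continuously differentiable. I would then choose $\delta \in (0, \rho]$ small enough that the map $x \mapsto x - \lambda g(x) + \lambda p(t)$ sends $\ball_{\mathbb{R}^n}[x(t), \delta]$ into $U$; such a $\delta$ exists by continuity. For any $x \in \ball_{\mathbb{R}^n}[x(t),\delta]$ this yields
$$
\|\hat{d} - x(t)\| = \bigl\|s_t\bigl(x - \lambda g(x) + \lambda p(t)\bigr) - s_t\bigl(x(t) - \lambda g(x(t)) + \lambda p(t)\bigr)\bigr\| \leq \ell_s(1 + \lambda \ell_g)\,\|x - x(t)\|.
$$

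From this estimate the triangle inequality gives $\|x - \hat{d}\| \leq (1 + \ell_s(1 + \lambda \ell_g))\|x - x(t)\|$, and hence also $\|\tfrac{1}{\lambda}(x - \hat{d})\| \leq \tfrac{1}{\lambda}(1 + \ell_s(1 + \lambda \ell_g))\|x - x(t)\|$. Since the norm on the product space is the max-norm, setting
$$
\eta := \max\bigl\{1,\ \ell_s(1 + \lambda \ell_g),\ 1 + \ell_s(1 + \lambda \ell_g),\ \tfrac{1}{\lambda}(1 + \ell_s(1 + \lambda \ell_g))\bigr\}
$$
delivers the claimed bound $\|(x, \hat{d}, \hat{y}) - (x(t), x(t), 0)\| \leq \eta\|x - x(t)\|$. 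The only mild obstacle I anticipate is careful bookkeeping of how the parameter $p(t)$ enters the definition of $H$ versus the right-hand side of the associated generalized equation, so that the membership $\hat{y} \in H(x, \hat{d})$ is interpreted consistently with the rest of the algorithmic framework; once this is settled, the proof reduces to composing two Lipschitz estimates.
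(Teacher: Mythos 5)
Your proof follows essentially the same route as the paper's: verify the membership $\hat{y}\in H(x,\hat d)$ by unwinding the resolvent, use that $s_t(x(t)-\lambda g(x(t))+\lambda p(t))=x(t)$, compose the Lipschitz estimates for $s_t$ and $g$ to bound $\|\hat d - x(t)\|$, and then assemble the product max-norm bound. Two small side notes: your bound on $\|\hat y\|$ (passing through $\|x-\hat d\|\le\|x-x(t)\|+\|\hat d - x(t)\|$) is actually a bit more careful than the paper's, which writes $\|\hat y\|\le(1+1/\lambda)\|\hat d - x(t)\|$ where it should involve $\|x-\hat d\|$; and you correctly flag that strictly the membership is $\hat y + (p(t),0)\in H(x,\hat d)$, i.e., $\hat y$ is the residual relative to the target $(p(t),0)$, which the paper's notation glosses over.
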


\begin{proof} 
	We follow the proof from \cite[Proposition 5.1]{GMOV2023}. Fix any $ t \in [0, T] $ and find $ \delta > 0 $ such that $ \ell_s > 0 $ is a Lipschitz constant of $ s $ on $ \ball_{\mathbb{R}^n}[x(t) - \lambda g(x(t))+\lambda p(t), (1 + \lambda \ell_g)\delta] $ and $ \ell_g $ is a Lipschitz constant of $ g $ on $ \ball_{\mathbb{R}^n}[x(t), \delta] $. Fix any $ x \in \ball_{\mathbb{R}^n}[\bar{x}, \delta] $. Then $ x - \lambda g(x)+\lambda p(t) \in \ball_{\mathbb{R}^n}[x(t) - \lambda g( x(t))-p(t), (1 + \lambda \ell_g)\delta] $. Let $ \hat{d} := s_t(x - \lambda g( x)+\lambda p(t)) $. By the definition of $ s_t $, we have
	\begin{eqnarray*}
		\| \hat{d} - x(t) \| & = \| s_t(x - \lambda g( x)+\lambda p(t)) - s_t(x(t) - \lambda g(x(t))+p(t)) \| \\ 
		& \leq \ell_s \| x - \lambda g(x)+\lambda p(t) - (x(t) - \lambda g(x(t))+\lambda p(t)) \| \\ 
		& \leq \ell_s (1 + \lambda \ell_g) \| x - x(t) \|.
	\end{eqnarray*}
	
	Moreover, since $ x - \lambda f( x)+\lambda p(t) \in \hat{d} + \lambda F(\hat{d}) $, it follows that $ \hat{y} \in \gph \, H(x, \hat{d}) $. Additionally, we have
	$$
	\| \hat{y} \| \leq \left( 1 + \tfrac{1}{\lambda} \right) \| \hat{d} - x(t) \| \leq \left( 1 + \frac{1}{\lambda} \right) \ell_s (1 + \lambda \ell_g) \| x - x(t) \|.
	$$
	
	Thus, we obtain
	\begin{equation*}
		\| (x, \hat{d}, \hat{y}) - (x(t), x(t), 0) \| \leq \max\left\{ 1, \left( 1 + \tfrac{1}{\lambda} \right) \ell_{s} (1 + \lambda \ell_g), \ell_{s} (1 + \lambda \ell_g) \right\} \| x - x(t) \|.
	\end{equation*}
	
\end{proof}

It is a well-known fact that for each positive $ \lambda $, the mapping $ (I+\lambda G)^{-1} $ is single-valued and Lipschitz continuous on $ \mathbb{R}^n $ with a constant of 1 for any set-valued mapping $ G:\mathbb{R}^n \tto \mathbb{R}^n $ that is maximal monotone. A set-valued mapping $ G : \mathbb{R}^n \rightrightarrows \mathbb{R}^n $ is said to be \emph{monotone} if
\begin{equation*}
	\langle y - v, x - u \rangle \geq 0 \quad \text{for every} \quad (x, y), (u, v) \in \text{Graph} \, G
\end{equation*}

and $G $ is said to be \emph{maximal monotone} if it is monotone and there exists no other monotone mapping whose graph strictly contains the graph of $ G $.

\begin{example1} 
	For each $ i=1,2,\ldots, N $, consider a set-valued mapping $ F_i:\mathbb{R}^n \tto\mathbb{R}^n $ that is maximal monotone. Define a set-valued mapping $ G:\mathbb{R}^n\tto\mathbb{R}^n $ by
	
	\begin{equation*}
		\gph\,G:=\bigcup_{i=1}^k \gph\,G_i.
	\end{equation*}
	
	Then the mapping $ (I_n + G)^{-1} $ has at least one selection around $ \bx $ and $ \by $ for each $ (\bx,\by) \in \gph\,G $, which is Lipschitz continuous on the whole $ \mathbb{R}^n $ with a constant of 1.
\end{example1}

Using Remark \ref{remCodevSingle} and Remark \ref{remCodevInv}, we get the following lemma.
\begin{lemma}
	\label{lemCoderivativeHt}
 Let $ x:[0,T]\longrightarrow \mathbb{R}^n $ be a solution of \eqref{eqAplInclusion} and assume that there is some $ \lambda > 0 $ such that for each $t\in [0,T]$, the mapping  $ (I_n + \lambda F)^{-1} $ has a single-valued Lipschitz continuous localization $s_t$ around $x(t) - \lambda g(x(t))+p(t)+1/\lambda\,x(t) $ for $x(t)$. 
For each $t\in [0,T]$ and each $((x,d), (v, w))\in \gph\,(H-p(t))$, we have
\begin{eqnarray*}
	D^*H((x,d), (v,w))=
	\left( \begin{array}{ccccc}
		\nabla g( x)^T& &-1/\lambda\, I_n+ 1/\lambda D^*(I_n+\lambda F)(d,v-g( x)+p(t)+1/\lambda\,d)\\
		I_n& &-I_n	\end{array}
	\right),
\end{eqnarray*}
where 
\begin{eqnarray*}
 \lbrace (A^Tu,u): u\in \mathbb{R}^n\text{ and } A\in \partial_B s_t(v-g( x)+p(t)+1/\lambda\,d) \rbrace\subset\gph\,	D^*(I_n+\lambda F)(d,v-g(x)+p(t)+1/\lambda d).
\end{eqnarray*}
\end{lemma}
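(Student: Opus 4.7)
The plan is to decompose $H$ into the sum of a continuously differentiable mapping and a set-valued one, apply the sum rule for limiting coderivatives, and convert the set-valued piece to the desired form involving $D^*(I_n+\lambda F)$ via the scaling identity for coderivatives of scalar multiples of set-valued mappings.

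Concretely, I would split $H(x,d) = L(x,d) + M(x,d)$, where $L$ is smooth and $M$ carries the set-valued part. A convenient choice is $L(x,d) = (g(x),\,x-d)$ and $M(x,d) = (F(d),\,0)$. Transposing $\nabla L(x,d)$ gives the smooth block of the coderivative matrix directly. The graph of $M$ has the product structure $\gph M = \mathbb{R}^n \times \gph F \times \{0\}$ (with $x$ free and the second output coordinate fixed at $0$), so the limiting normal cone factorizes and yields $D^*M((x,d),(n_1,0))(y_1^*,y_2^*) = \{0\} \times D^*F(d,n_1)(y_1^*)$. Combining via the sum rule for smooth-plus-set-valued mappings (\cite[Exercise~10.43]{RW1998}) gives the coderivative of $H$ in terms of $\nabla g(x)^T$ and $D^*F(d,\cdot)$.

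To rewrite the $D^*F(d,\cdot)$ term in terms of $D^*(I_n + \lambda F)$, I would use the identity $F = \tfrac{1}{\lambda}\bigl((I_n + \lambda F) - I_n\bigr)$ together with the scaling rule $D^*(\alpha G)(d,\alpha q)(y^*) = D^*G(d,q)(\alpha y^*)$ (with $\alpha = 1/\lambda$ and $G = I_n + \lambda F$) and an elementary sum-rule step that absorbs the $-I_n/\lambda$ contribution into the smooth block. Since the graph point $((x,d),(v,w))$ lies in $\gph(H-p(t))$ rather than $\gph H$, the relevant element of $(I_n+\lambda F)(d)$ is obtained by using $v+p(t)-g(x)\in F(d)$ and inverting the relation $q = d + \lambda(v+p(t)-g(x))$; this supplies the second argument appearing in the coderivative factor (expressed either as $d + \lambda(v+p(t)-g(x))$ or, after scaling, as $v-g(x)+p(t)+d/\lambda$).

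For the final containment, I would apply Remark~\ref{remCodevSingle} to the Lipschitz selection $s_t$: every $A \in \partial_B s_t$ at the relevant point satisfies $(u,A^T u) \in \gph D^*s_t$ for all $u \in \mathbb{R}^n$. Remark~\ref{remInv}, applied with $F := s_t$ and $F^{-1}$ the local branch of $(I_n + \lambda F)$, transfers this through inversion to $(-A^T u, -u) \in \gph D^*(I_n + \lambda F)(d,v-g(x)+p(t)+d/\lambda)$; replacing $u$ by $-u$ (permissible since $u$ ranges over all of $\mathbb{R}^n$) produces the claimed $(A^T u, u) \in \gph D^*(I_n + \lambda F)$. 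The main technical hurdle is the scaling and shift bookkeeping, namely tracking the $1/\lambda$ factors coming from $F = \tfrac{1}{\lambda}((I_n+\lambda F)-I_n)$ and ensuring that the constant shift by $p(t)$, arising because the base point lies in $\gph(H-p(t))$ rather than in $\gph H$, is propagated consistently into the second argument of $D^*(I_n+\lambda F)$. Once this bookkeeping is in place, everything else reduces to the standard product-rule structure of the limiting normal cone and routine transpositions.
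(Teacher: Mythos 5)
Your decomposition $H = L + M$ with $L(x,d) = (g(x),\,x-d)$ smooth and $M(x,d) = (F(d),\,0)$ set-valued, followed by the smooth-plus-set-valued coderivative sum rule and the product-structure computation of $N_{\gph M}$, is exactly the argument the paper compresses into its one-line citation of Remarks~\ref{remCodevSingle} and~\ref{remCodevInv}. Your route to the final containment (apply Remark~\ref{remCodevSingle} to the Lipschitz selection $s_t$, then pass through the inverse via Remark~\ref{remInv}, then negate the free variable) is precisely the content of Remark~\ref{remCodevInv}; you are re-deriving that remark rather than quoting it, but the chain of reasoning is identical, so the two approaches coincide.

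The one place you should not hand-wave is the ``scaling and shift bookkeeping'' you flag at the end, because that is where the actual content of the lemma lives and where the stated formula does not survive a direct check. From the sum rule you get, for $(y_1^*,y_2^*)$ dual to the range of $H$,
\begin{equation*}
D^*H((x,d),(v+p(t),w))(y_1^*,y_2^*) \;=\; \Bigl(\nabla g(x)^T y_1^* + y_2^*,\;\; D^*F\bigl(d,\,v+p(t)-g(x)\bigr)(y_1^*) - y_2^*\Bigr),
\end{equation*}
and converting the $D^*F$ term via $F = \tfrac{1}{\lambda}\bigl((I_n+\lambda F) - I_n\bigr)$ together with your scaling rule yields
\begin{equation*}
D^*F(d,n_1)(y_1^*) \;=\; D^*(I_n+\lambda F)\bigl(d,\; d + \lambda n_1\bigr)\bigl(\tfrac{1}{\lambda}y_1^*\bigr) - \tfrac{1}{\lambda}y_1^*,
\qquad n_1 := v + p(t) - g(x).
\end{equation*}
So the base point of $D^*(I_n+\lambda F)$ that naturally appears is $d + \lambda\bigl(v - g(x) + p(t)\bigr)$, not $v - g(x) + p(t) + \tfrac{1}{\lambda}d$ as in the statement (these differ by the factor $\lambda$, and the latter does \emph{not} lie in $\gph(I_n+\lambda F)$; it lies in $\gph\bigl(\tfrac{1}{\lambda}I_n + F\bigr)$). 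Moreover, the $D^*(I_n+\lambda F)$ and $-\tfrac{1}{\lambda}I_n$ terms belong in the $d$-component (the $(2,1)$ block if the matrix acts on $(y_1^*;y_2^*)$ to produce $(x^*;d^*)$), not in the $(1,2)$ block where the statement places them, and the argument fed to $D^*(I_n+\lambda F)$ is $y_1^*/\lambda$ rather than $y_2^*$. You should carry the computation to the end, obtain the corrected version, and explicitly reconcile it with (or flag the discrepancy against) the displayed matrix and the downstream lemma on $\mathcal{A}_{\text{reg}}\,H$, which inherits the same placement; ``once this bookkeeping is in place, everything else reduces to\ldots'' conceals exactly the step at which the stated formula breaks.
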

\begin{lemma}Suppose that the assumptions of Lemma \ref{lemCoderivativeHt} are satisfied.
For each $t\in [0,T]$, each $((x,d), (v, w))\in \gph\,(H-p(t))$, and each  $A, B, C, D \in \mathbb{R}^{n\times n}$ and for each $J\in \partial_B s_t(v-g(x)+p(t)+1/\lambda\,d)$ define the matrices
	\begin{eqnarray*}
	M:=\left( \begin{array}{cccc}
		A \nabla g( x)  -1/\lambda \, C J +1/\lambda\,C&  & A -C J\\
		B \nabla g( x) -1/\lambda \, D J +1/\lambda\,D& &B -DJ		\end{array}
	\right) \text{ and }
	N:= \left( \begin{array}{ccc}
		A &  & 	C J\\
	B	& & D J		\end{array}	\right).
	\end{eqnarray*}
If $M$	is non-singular, we have
\begin{eqnarray}
	\label{eqAregHt}
		\left( 
		M,  N \right)\in \mathcal{A}_{\text{reg}} H((x,d),(v,w)).
\end{eqnarray}

\end{lemma}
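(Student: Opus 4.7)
The plan is to verify $(M, N) \in \mathcal{A}_{\text{reg}} H((x,d), (v,w))$ directly from its definition. Since non-singularity of $M$ is assumed, the remaining work is to establish, for each $i \in \{1, 2, \ldots, 2n\}$, the row-wise inclusion $((N)_i^T, (M)_i^T) \in \gph D^* H((x,d),(v,w))$. Both matrices have an obvious $2\times 2$ block structure: the top $n$ rows are assembled from $A$ and $CJ$ (together with $\nabla g(x)$ and $J$), whereas the bottom $n$ rows are built in exactly the same way from $B$ and $DJ$. The computation for the two groups is identical up to relabeling, so I would carry it out explicitly only for $i \in \{1, \ldots, n\}$ and invoke symmetry, with $(A, C)$ replaced by $(B, D)$, for $i \in \{n+1, \ldots, 2n\}$.

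Fix $i \in \{1, \ldots, n\}$ and set
$$y^* := (N)_i^T = \bigl(A^T e_i,\; J^T C^T e_i\bigr) =: (y^*_1, y^*_2), \quad x^* := (M)_i^T = \Bigl(\nabla g(x)^T A^T e_i + \tfrac{1}{\lambda}(I_n - J^T) C^T e_i,\; A^T e_i - J^T C^T e_i\Bigr).$$
Reading the coderivative matrix from Lemma \ref{lemCoderivativeHt} component-wise, the second component is immediate from the bottom block row $(I_n, -I_n)$, since $x^*_2 = y^*_1 - y^*_2$. For the first component, matching against the $(1,2)$-entry reduces the verification to the single inclusion
$$C^T e_i \in D^*(I_n + \lambda F)\bigl(d,\, v - g(x) + p(t) + d/\lambda\bigr)(J^T C^T e_i),$$
because then the residual $\tfrac{1}{\lambda}(I_n - J^T) C^T e_i = \tfrac{1}{\lambda}\bigl(C^T e_i - y^*_2\bigr)$ is precisely an element of $\bigl(-\tfrac{1}{\lambda} I_n + \tfrac{1}{\lambda} D^*(I_n + \lambda F)(\cdot)\bigr)(y^*_2)$, as required.

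The only essential ingredient is this last inclusion, and it is delivered by Remark \ref{remCodevInv} together with the hypothesis $J \in \partial_B s_t\bigl(v - g(x) + p(t) + d/\lambda\bigr)$: since $s_t$ is a single-valued Lipschitz localization of $(I_n + \lambda F)^{-1}$ near the relevant point, Remark \ref{remCodevInv} yields $\{(J^T u, u) : u \in \mathbb{R}^n\} \subset \gph D^*(I_n + \lambda F)\bigl(d, v - g(x) + p(t) + d/\lambda\bigr)$, and specializing $u := C^T e_i$ gives exactly what is needed. I do not expect a genuine obstacle: the matrices $M$ and $N$ have been engineered so that the coderivative inclusion decomposes cleanly along the block rows, and no analytic input beyond the Bouligand-subdifferential inclusion already recorded in Remark \ref{remCodevInv} is required. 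The delicate part is purely notational, namely tracking transposes and making sure that each block row is paired with the correct entry of the coderivative matrix.
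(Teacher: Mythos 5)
Your proposal is correct and follows essentially the same route as the paper: you verify the row-wise inclusion $((N)_i^T,(M)_i^T)\in\gph D^*H((x,d),(v,w))$ using the coderivative structure from Lemma \ref{lemCoderivativeHt}, handle the top $n$ rows explicitly and invoke the $(A,C)\leftrightarrow(B,D)$ symmetry for the bottom $n$, and then conclude from the non-singularity of $M$. The only cosmetic difference is that you unpack the Bouligand-subdifferential inclusion back to Remark \ref{remCodevInv}, whereas the paper simply cites Lemma \ref{lemCoderivativeHt} where that content is already packaged.
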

\begin{proof}Fix any $t\in [0,T]$, any $((x,d), (v, w))\in \gph\,H$, and
  any $A, B, C, D \in \mathbb{R}^{n\times n}$, and  any  $J\in \partial_B s_t(v-g(x)+p(t)+1/\lambda\,d)$. Further, by Lemma \ref{lemCoderivativeHt}, for each $i \in \lbrace 1, 2,\dots,n\rbrace$ we get
\begin{eqnarray*}
	\left( \left( \begin{array}{ccc}
		(A)_i^T \\
		J^T(C)_i^T 		\end{array}	\right), \left( \begin{array}{ccc}
		\nabla g( x)^T (A)_i^T  -1/\lambda \, J^T(C)_i^T +1/\lambda\,(C)_i^T	\\
		(A)_i^T -J^T(C)_i^T	\end{array}
		\right)\right)\in \gph D^*H((x,d), (v,w))
\end{eqnarray*}
 and the same is satisfied for $(B, D)$ instead of $(A, C)$. If $M$ is non-singular, then \eqref{eqAregHt}  holds.
\end{proof}

We suppose that $F$ is maximal monotone and $g$ is  strongly monotone, i.e., we say that $g$ is \emph{strongly monotone} if there exists $c > 0$ such that 
\begin{eqnarray*}
	\left\langle g(x) - g(u), x - u \right\rangle \geq c \Vert x - u \Vert^2 \quad \text{for each} \quad x,u \in \mathbb{R}^n.
\end{eqnarray*}
By \cite[12.54 Proposition]{RW1998}, the mapping $(g + F)^{-1}$ is Lipschitz continuous on the whole of $\mathbb{R}^n$ with the constant $1/c$.
This guarantees that the mapping $g + F$ is strongly metrically subregular around each point on its graph with the constant $1/c$.

Furthermore, we assume that $p$ is Lipschitz continuous on $[0,T]$ with a constant $\ell$.
Then, the unique (exact) solution $x(\cdot)$ to \eqref{eqAplInclusion} is given by
$$
x(t) = (g + F)^{-1}(p(t)) \quad \text{for each} \quad t \in [0,T],
$$
and is Lipschitz continuous on $[0,T]$ with the constant $\ell/c$.
Under the these assumptions the mapping $H^{-1}$ is also Lipschitz continuous on whole $\mathbb{R}^n$ and for each $t\in [0,T]$ we have
\begin{eqnarray*}
	\begin{pmatrix}
		x(t)\\
		x(t)
	\end{pmatrix}= H^{-1}((p(t),0)).
\end{eqnarray*}

For non-uniform semismooth$^*$ path-following  method, we consider parametric  forward-backward splitting method, where the corresponding mapping $\mathcal{T}:\mathbb{R}\times \mathbb{R}^n\times\mathbb{R}^n \longrightarrow \mathbb{R}^{2n}$ is defined by 
$$
\mathcal{T}(t,x,d):=\left( \left(I_n+\lambda F\right)^{-1}\big(x-\lambda g(x)+\lambda p(t)\big),d-\lambda(d-x)\right)\quad\text{for}\quad (t,x,d)\in\mathbb{R}\times \mathbb{R}^n\times\mathbb{R}^n,
$$
 where $\lambda>0$. 
Under the assumptions on $F$ and $g$, the mapping $\mathcal{T}$ is well defined and is a contraction mapping for Lipschitz continuous $g$ on whole $\mathbb{R}^n$ with a constant $\ell$  and for  $\lambda\in (0,2c/\ell^2)$ with $\lambda<1$.

In this section, we examine specific examples of electric circuits, as discussed in \cite{CR2018}. For our analysis, we apply different algorithms tailored to the unique requirements of each example.

For Example 7.2 (i) and Example 7.3, we utilize Algorithm \ref{algPathFollowingUniform} for the mapping $H$. This algorithm is particularly effective for scenarios, where a uniform grid is employed. To ensure the convergence of this algorithm, it is crucial to choose a sufficiently small step size $ h $ for the grid. The conditions for this convergence are rigorously established in Theorem \ref{algPathFollowingUniform}, which outlines how the step size affects the accuracy of the solution.

On the other hand, for Example 7.2 (ii), we implement Algorithm \ref{algSemismoothWholeT} for the mapping $ H $. This algorithm operates under different constraints and requires a more adaptive approach. Specifically, at each time step $ t_k $, it is necessary to determine the next time step $ t_{k+1} $ and, if needed, refine the solution $ x_k $ using the mapping $ \mathcal{T} $.

The calculations are based on the following idea: Theorem \ref{thmStabilitySubreg}(iv) implies that there exist constants $ \kappa > 0 $ and $ \alpha > 0 $ such that for each $ t \in [0, T] $ and each $ (x, d) \in \ball_{\mathbb{R}^{2n}}[(x(t), x(t)), \alpha] $, we have
$$
\| (x, d) - (x(t), x(t)) \| \leq \kappa \, \dist(0, H(x, d)-p(t)).
$$
Thus, if for some $ k \in \mathbb{N} $, the (Newton) iteration $ (x_{k+1}, d_{k+1}) \in \ball_{\mathbb{R}^{2n}}[(x(t_{k+1}), x(t_{k+1})), \alpha] $ and for some $ y_{k+1} \in H(x_{k+1}, d_{k+1}) -p(t_{k+1})$, we have
$$
\kappa \| y_{k+1} \| \leq \frac{1}{2} \max \left\{ \| (x_{k}, d_k) - (x(t_k), x(t_k)) \|, t_{k+1} - t_k \right\} + \hat{\ell} (t_{k+1} - t_k),
$$
then
$$
\| (x_{k+1}, d_{k+1}) - (x(t_{k+1}), x(t_{k+1})) \| \leq \frac{1}{2} \max \left\{ \| (x_{k}, d_k) - (x(t_k), x(t_k)) \|, t_{k+1} - t_k \right\} + \ell (t_{k+1} - t_k),
$$
where $ \hat{\ell}:=\sqrt{2}\,\ell $ is the Lipschitz constant of $ (x(\cdot), x(\cdot)) $.

Since the constant $ \alpha $ is difficult to find, in the numerical implementation, we search for $ t_{k+1} $ and refine $ x_k $ such that the corresponding $ x_{k+1} $ satisfies
\begin{eqnarray}
\label{eqEstimated2}
\kappa \| y_{k+1} \| \leq \frac{1}{2} \max \left\{ c_k, t_{k+1} - t_k \right\} + \ell (t_{k+1} - t_k),
\end{eqnarray}
where $ \| (x_{k}, d_k) - (x(t_k), x(t_k)) \| \leq c_k := \frac{1}{2} \max \left\{ c_{k-1}, t_{k} - t_{k-1} \right\} + \ell (t_{k} - t_{k-1}) $ and $ c_0 := 0 $.

To find $t_{k+1}$, we choose a maximum step size $h_{\text{max}} > 0$ , and for some $a \in (0, 1)$ (both independent of $k$), we determine $i \in \mathbb{N}_0$ such that $t_{k+1} := t_k + a^i h_{\text{max}}$, and the corresponding $(x_{k+1}, d_{k+1})$ satisfies the condition \eqref{eqEstimated2}. If no such $i$ exists, we refine $(x_k, d_k)$ using the iterations by the mapping $\mathcal{T}$, and repeat the process until a suitable $i$ is found.

\begin{example1} \label{ex_simple_circuit}
	Consider a simple circuit with a non-linear resistor defined by the function $g: \mathbb{R} \longrightarrow \mathbb{R}$, a source $E > 0$, and an input signal $u: [0, T] \longrightarrow \mathbb{R}$. The instantaneous current is denoted by $i$, and the diode is given by a set-valued mapping $F: \mathbb{R} \rightrightarrows \mathbb{R}$.	
	By Kirchhoff voltage law, we have the relation \eqref{eqAplInclusion},	where $p = u - E$ and $x = i$. The term $g(x(t))$ represents the voltage across the resistor ($V_R$) and $F(x(t))$ represents the voltage across the diode ($V_D$). 
We consider two cases:
	
	\begin{enumerate}
		\item[\rm (i)] in the case of a practical diode, the voltage signal is given by $p(t) = 3 \sin(t)$,  the voltage of the resistor  is described by $g(x) = \sinh(x)$, and characteristic of the diode is defined as
		$$
		F(x) = 
		\begin{cases} 
			[V_1, V_2], & \text{if } x = 0, \\
			V_2 + \sqrt{x}, & \text{if } x > 0, \\
			V_1 - \sqrt{-x}, & \text{if } x < 0,
		\end{cases}
		$$
		where $V_1 < 0 < V_2$. For the simulation, the parameters are set as follows: $V_1 = -2$, $V_2 = 1$, and $T=3$. Simulation results for this setting is illustrated in Figure \ref{fig_sol_rate1};
		
		\item[\rm (ii)] for an ideal diode, the input signal is given by $p(t) = \sin(2\pi t)$,  voltage of the resistor  is given by $g(x) = \log(x+\sqrt{x^2+1})$, and characteristic of the diode is described by the normal cone to $[0,\infty)$, i.e., $F(x) = N_{\mathbb{R}_+}(x)$ for $x\in\mathbb{R}$. The results of the simulation (for $T=3$) are illustrated in Figure \ref{figExample1}.
	\end{enumerate}
\end{example1}
\begin{example1}
	\label{ex_2D_circuit}
	Consider the circuit, which includes resistances $R_B > 0$ and $R_L > 0$, two input signals $u_1$ and $u_2$, and a P-N-P transistor. The operation of the transistor follows the Ebers-Moll model \cite[p. 409]{SedraSmith2004}, involving two diodes in opposite directions and two current-controlled sources $\alpha I_C$ and $\alpha_N I_E$. Here, $\alpha_N \in [0, 1]$ represents the current gain in normal operation, and $\alpha_I \in [0, 1]$ denotes the inverted gain. The emitter and collector currents, $i_E$ and $i_C$, are defined by
	$$
	i_E = I_E - \alpha_I I_C, \quad i_C = I_C - \alpha_N I_E.
	$$
	This system can be rewritten in matrix form as:
	$$
	\begin{pmatrix}
		i_E \\ 
		i_C
	\end{pmatrix}
	=
	\begin{pmatrix}
		1 & -\alpha_I \\ 
		-\alpha_N & 1
	\end{pmatrix}
	\begin{pmatrix}
		I_E \\ 
		I_C
	\end{pmatrix}.
	$$
Using Kirchhoff laws, the base current $i_B$ is determined as:
	$$
	i_B = -(i_E + i_C),
	$$
	and the corresponding voltage equations are
\begin{eqnarray*}
&&0=R_B (-i_E - i_C) +u_1 - V_E  ,\\
&&0=  R_B (i_E + i_C)+V_C + u_2 + R_L i_C - u_1.
\end{eqnarray*}
	In this model, the diode characteristics are described by set-valued mappings
	$$
	F_1(x) =
	\begin{cases}
		[V_{E_1}, V_{E_2}], & \text{if } x = 0, \\
		V_{E_1} + \sqrt{x}, & \text{if } x > 0, \\
		V_{E_2} -  \sqrt{-x}, & \text{if } x < 0,
	\end{cases}
	\quad\text{and}\quad\quad F_2(x) =
	\begin{cases}
		[V_{C_1}, V_{C_2}], & \text{if } x = 0, \\
		V_{C_1} + \sqrt{x}, & \text{if } x > 0, \\
		V_{C_2} - \sqrt{-x}, & \text{if } x < 0,
	\end{cases}
	$$
	where the constants  $V_{E_2}>0>V_{E_1}$ and $V_{C_2}>0>V_{C_1}$. 
	Thus, we obtain $V_E \in F_1(I_E)$ and $V_C \in F_2(I_C)$. The overall voltage relationships are expressed as \eqref{eqAplInclusion} with $x:=(x_1,x_2)^T:=(I_E,I_C)^T$,
	\begin{eqnarray*}
		p:=\begin{pmatrix}
			u_1\\
			u_1-u_2
		\end{pmatrix}, \quad
		g(x_1,x_2):=
		\begin{pmatrix}
			(1-\alpha_N)R_B & (1-\alpha_I)R_B \\ 
			(1-\alpha_N)R_B-\alpha_N R_L & (1-\alpha_N)R_B + R_L
		\end{pmatrix}
		\begin{pmatrix}
			x_1 \\ 
			x_2
		\end{pmatrix},
	\end{eqnarray*}
	and 
	\begin{eqnarray*}
		F(x_1,x_2):= (F_1(x_1), F_2(x_2))^T.
	\end{eqnarray*}
	
For the simulation, the parameters are set as follows: $V_{E_1} = -3$, $V_{E_2} = 1$, $V_{C_1} = -1$, $V_{C_2} = 2$, $R_B = 1$, $R_L = 1$, $\alpha_I=0.1$,  $\alpha_N=0.5$, and $T=2\pi$. The input signals are defined as $u_1 = \sin{(t)}$ and $u_2 = 10 \sin{(t)}$. Figure \ref{fig_sol_con2} illustrates the simulation results for these settings.

\begin{figure}
\vspace{-0.5cm}
\centering
\begin{minipage}[c]{0.45\textwidth}
\includegraphics[width=\textwidth]{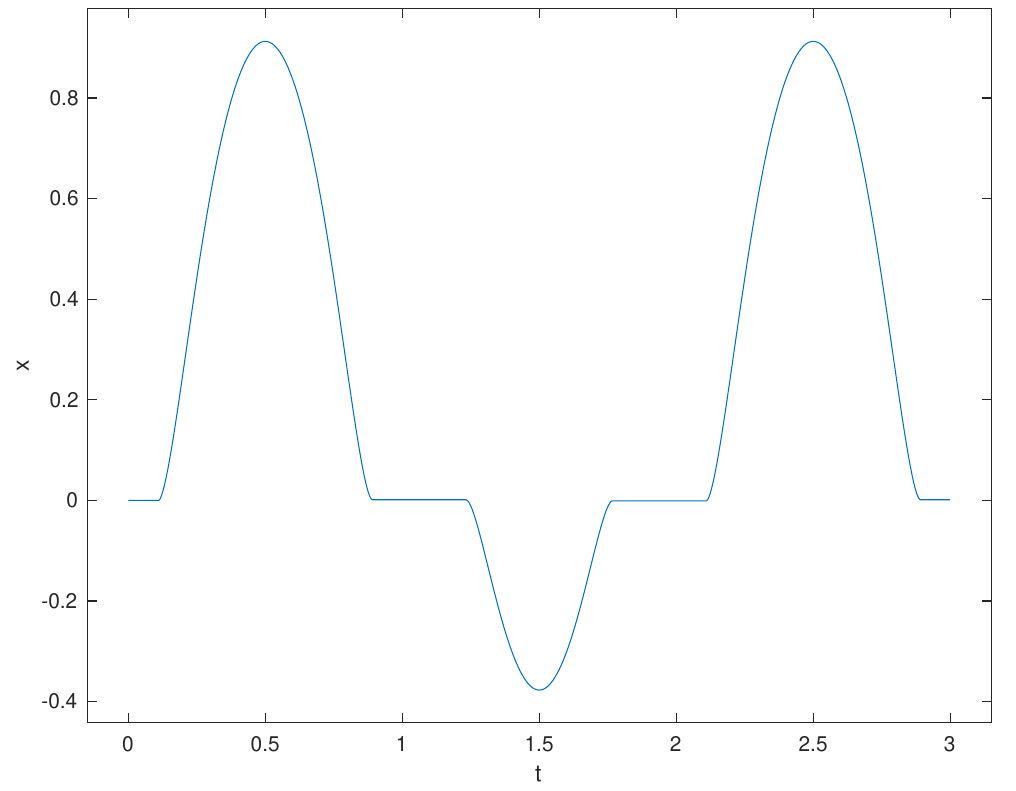}
\end{minipage} 
\begin{minipage}[c]{0.45\textwidth}
\includegraphics[width=\textwidth]{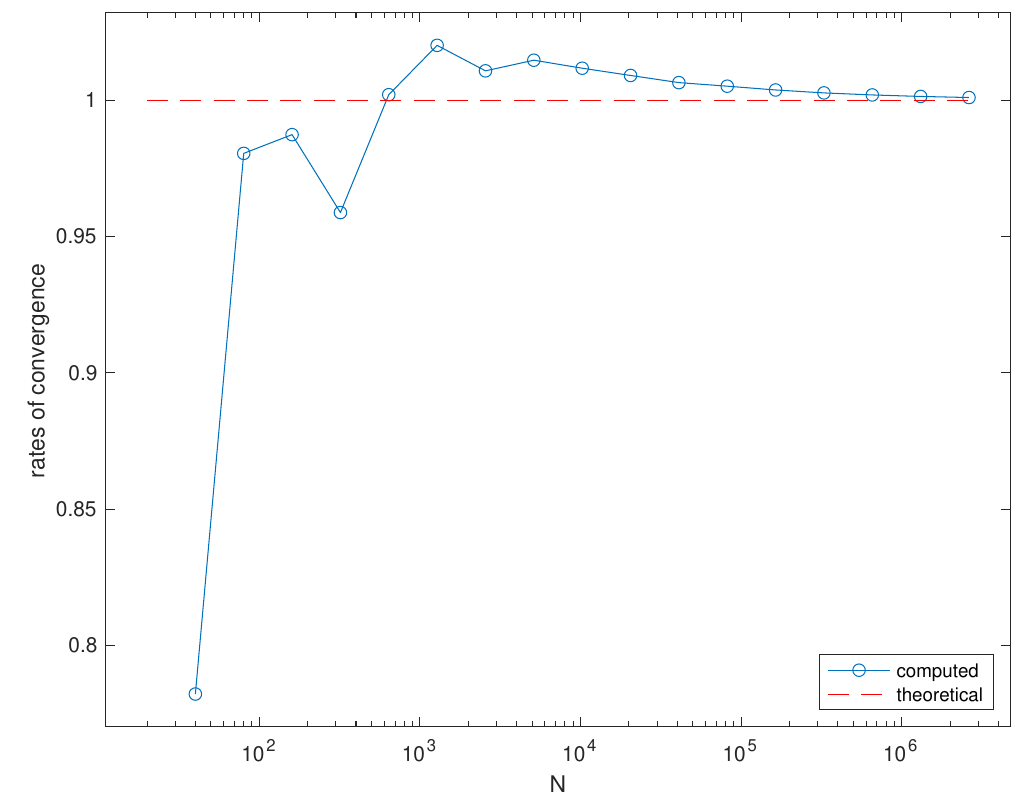}
\end{minipage}
\caption{Example \ref{ex_simple_circuit} (i) - numerical solution (left) obtained by Algorithm \ref{algPathFollowingUniform} and observed rates of convergence (right).}
\label{fig_sol_rate1}
\vspace{0.5cm}
\centering
\begin{minipage}[c]{0.45\textwidth}
\includegraphics[width=\textwidth]{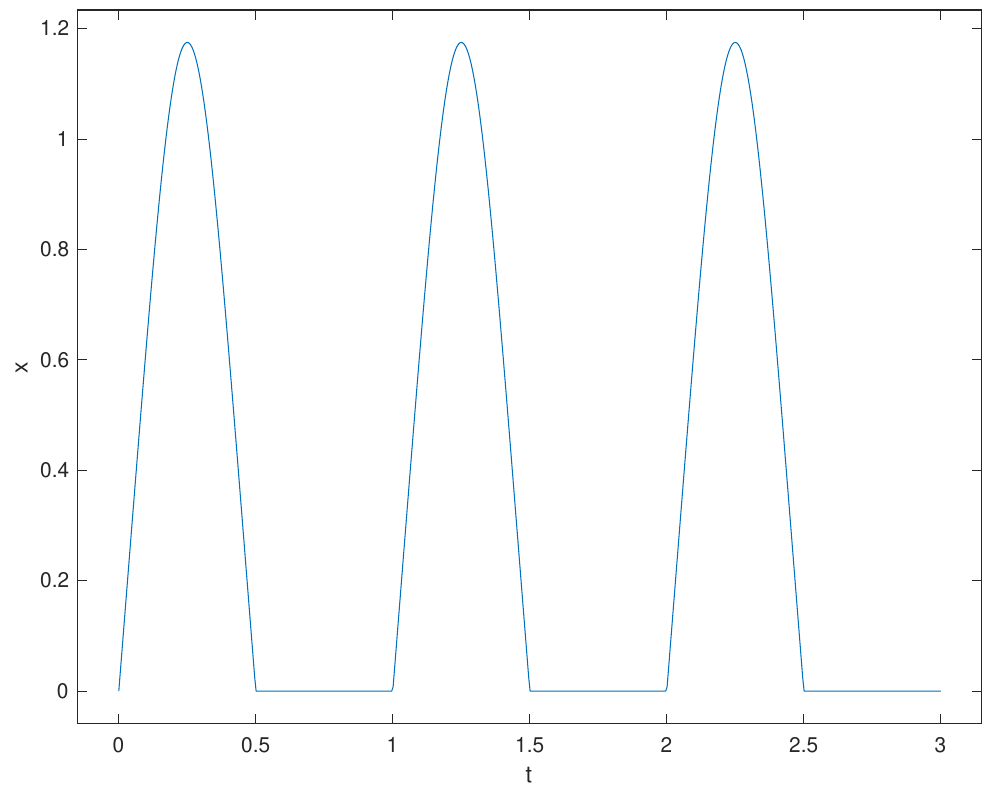}
\end{minipage} 
\begin{minipage}[c]{0.45\textwidth}
\includegraphics[width=\textwidth,height=0.8\textwidth]{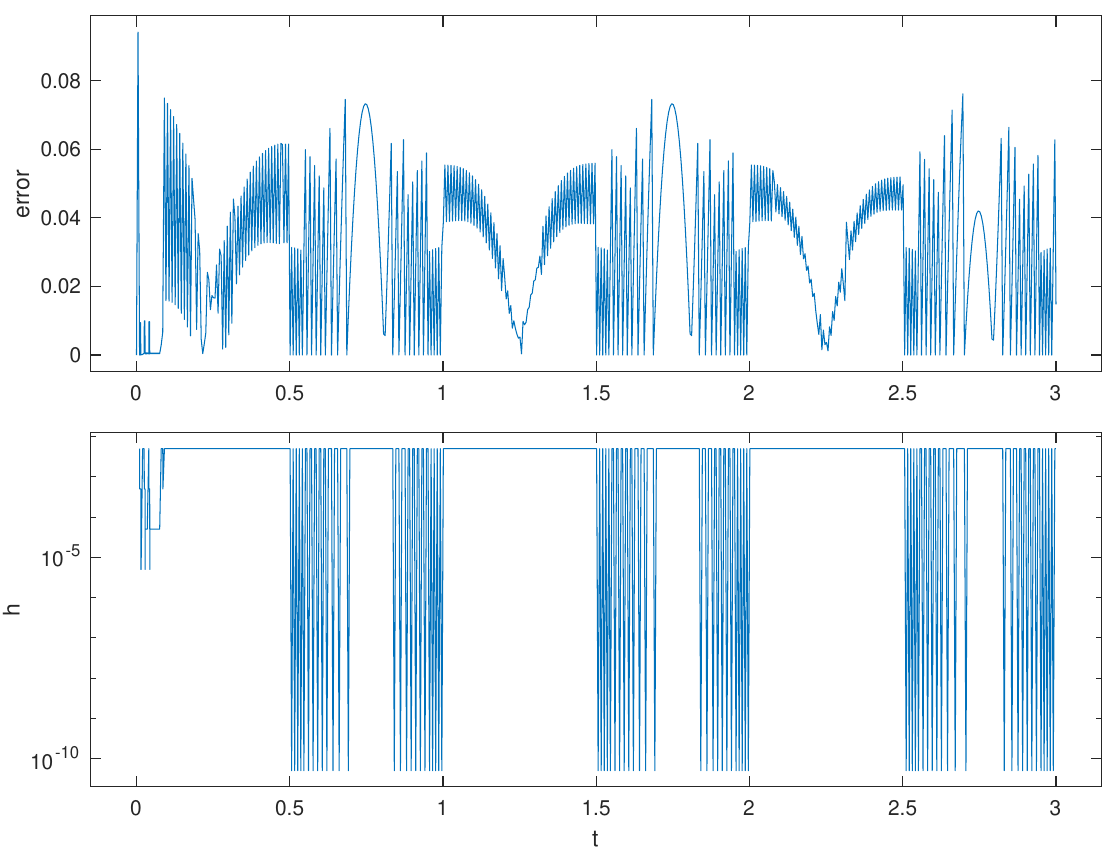}
\end{minipage}
\caption{Example \ref{ex_simple_circuit} (ii) - (exact) solution (left)   and observed error (top right) and length of steps  (bottom right) obtained by Algorithm \ref{algSemismoothWholeT}.} 
\label{figExample1}
\vspace{0.5cm}
\centering
\begin{minipage}[c]{0.45\textwidth}
\includegraphics[width=\textwidth]{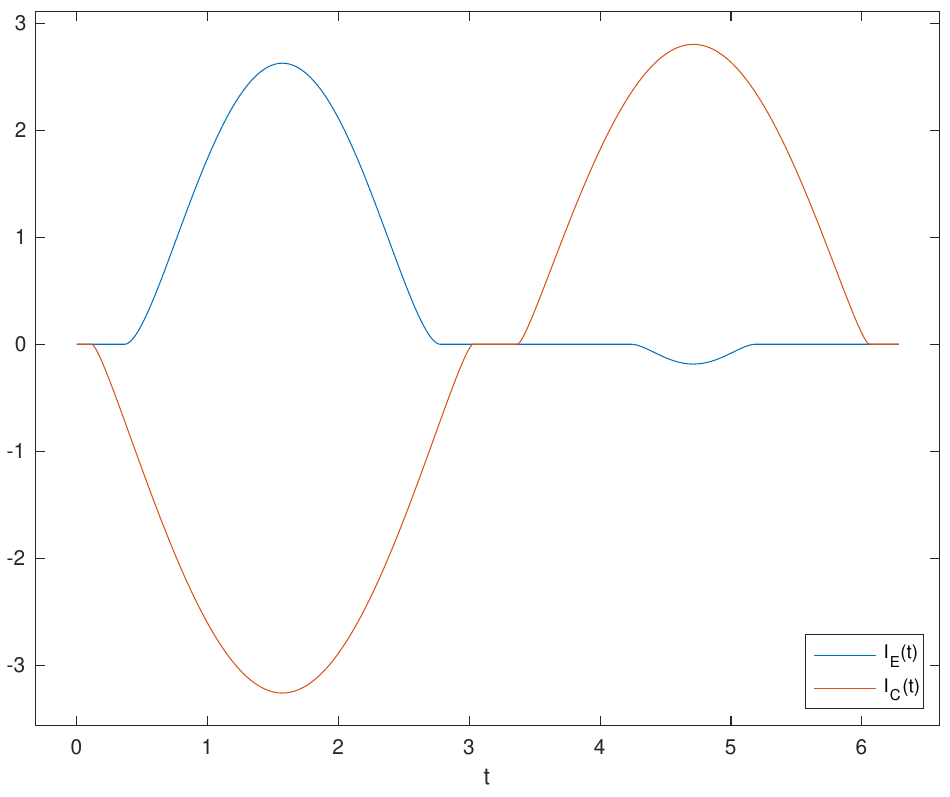}
\end{minipage} 
\begin{minipage}[c]{0.45\textwidth}
\includegraphics[width=\textwidth,height=0.84\textwidth]{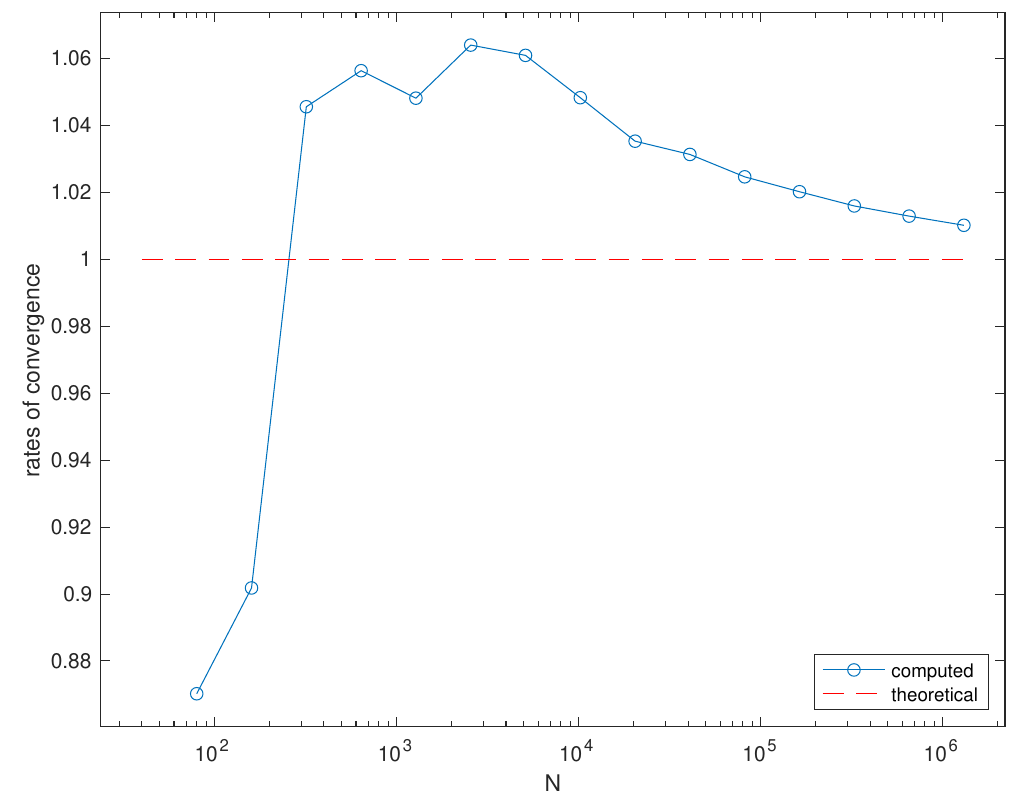}
\end{minipage}
\caption{Example \ref{ex_2D_circuit} - numerical solution (left) obtained by Algorithm \ref{algPathFollowingUniform} and observed rates of convergence (right).}
\label{fig_sol_con2}
\end{figure}
\end{example1}
Let us note that, in Examples \ref{ex_2D_circuit} and \ref{ex_simple_circuit} (ii), we used the exact  solution as our reference solution. For Example \ref{ex_simple_circuit} (i), we employed a highly accurate numerical solution obtained through the 'fzero' library in MATLAB as the reference solution.

\section{Conclusion}

In this paper, we introduced the new property called uniform semismoothness$^*$ and provided sufficient conditions for it. Additionally, we established conditions for uniform strong metric subregularity around the reference point, utilizing the stability of strong metric subregularity under Lipschitz continuous single-valued perturbations. Finally, we presented two numerical path-following methods for solving parametric inclusions.

The first method, which assumes that the corresponding graph of the set-valued mapping is uniformly semismooth$^*$ along the reference solution, guarantees linear convergence of the grid error on a uniform grid with decreasing step sizes, a result confirmed by our numerical experiments. The second method, which relies on point-wise semismoothness$^*$ along the reference solution, offers a good estimate for the grid error in a more general setting. However, its numerical implementation is challenging, necessitating the use of heuristics with adaptive step sizes to achieve practical solutions.

Future research will focus on improving the implementation of this second method, enhancing its applicability and performance in diverse scenarios.


\end{document}